\theoremstyle{plain}
\newtheorem{theorem}{Theorem}[section]
\newtheorem{definition}[theorem]{Definition}
\newtheorem{assumption}[theorem]{Assumption}
\newtheorem{lemma}[theorem]{Lemma}
\newtheorem{proposition}[theorem]{Proposition}
\theoremstyle{remark}
\newtheorem{remark}[theorem]{Remark}
\newtheorem{example}[theorem]{Example}
\def\k{{\kappa}}
\def\C{{\mathbf C}}
\def\R{{\mathbf R}}
\def\N{{\mathbf N}}
\def\Z{{\mathbf Z}}
\def\T{{\mathbf T}}
\def\Sph{{\mathbf S}} 
\def\Sch{{\mathcal S}}
\def\F{\mathcal F}
\def\O{\mathcal O}
\def\virgp{\raise 2pt\hbox{,}}
\def\({\left(}
\def\){\right)}
\def\<{\left\langle}
\def\>{\right\rangle}
\def\le{\leqslant}
\def\ge{\geqslant}
\def\1{{\bf 1}}
\def\Eq#1#2{\mathop{\sim}\limits_{#1\rightarrow#2}}
\def\Tend#1#2{\mathop{\longrightarrow}\limits_{#1\rightarrow#2}}
\def\d{{\partial}}
\def\l{\lambda}
\def\om{\omega}
\def\si{{\sigma}}
\def\eps{\varepsilon}
\DeclareMathOperator{\RE}{Re}
\numberwithin{equation}{section}
\begin{document}

\title[Geometric optics and instability for NLS and
Davey-Stewartson]{Geometric optics and  
instability for NLS and Davey-Stewartson models} 

\author[R. Carles]{R\'emi Carles}
\address[R. Carles]{Univ. Montpellier~2\\Math\'ematiques \\
  CC~051\\F-34095 Montpellier} 
\address{CNRS, UMR 5149\\  F-34095 Montpellier\\ France}
\email{Remi.Carles@math.cnrs.fr}

\author[E.~Dumas]{Eric Dumas}
\address[E.~Dumas]{Univ. Grenoble 1\\ Institut Fourier\\ 100, rue des
  Math\'ematiques-BP~74\\ 38402 Saint Martin d'H\`eres cedex\\ France} 
\email{Eric.Dumas@ujf-grenoble.fr}

\author[C. Sparber]{Christof Sparber}
\address[C. Sparber]{Department of Applied Mathematics and Theoretical
  Physics\\ 
  CMS, Wilberforce Road\\ Cambridge CB3 0WA\\ England}
\email{c.sparber@damtp.cam.ac.uk}

\thanks{This work was supported by the French ANR project
  R.A.S. (ANR-08-JCJC-0124-01) and by the Royal Society Research
  fellowship of C. Sparber}   
  
\begin{abstract} 
We study the interaction of (slowly modulated) high frequency waves
for multi-dimensional nonlinear  Schr\"odinger equations with gauge
invariant power-law  
nonlinearities and non-local perturbations. The model 
includes the 
Davey--Stewartson system in its elliptic-elliptic and
hyperbolic-elliptic variant.  
Our analysis reveals a new localization phenomenon for non-local
perturbations in the high frequency regime and allows us to infer  
strong instability results on the Cauchy problem in negative order Sobolev
spaces, where we prove norm inflation with infinite loss of
regularity by a constructive approach. 
\end{abstract}
\maketitle
\tableofcontents

\section{Introduction}
\label{sec:intro}

\subsection{Motivation}
\label{sec:motiv}

The \emph{Davey-Stewartson system} (DS) provides a canonical
description of the dynamics of weakly nonlinear two-dimensional waves
interacting with  
a mean-field $\chi (t,x_1,x_2) \in \R$; see \cite{Sulem} for more
details. In the following we shall consider  
\begin{equation}\label{eq:DS0}
\left \{
\begin{aligned}
  i\d_t \psi + \frac{1}{2}\(\eta \d^2_{x_1} + \d^2_{x_2} \) \psi &= 
   \(   \d_{x_1} \chi  
  + \mu |\psi|^2 \) \psi\ , \\
  \( \d^2_{x_1} + \d^2_{x_2} \)\chi & =      \lambda \d_{x_1} |\psi|^2, 
  \end{aligned}
   \right. \tag{DS} 
\end{equation}
where $(x_1,x_2) \equiv x \in \R^2$, $t\in \R$, and $\lambda,\mu\in
\R$ are some given parameters. 
In addition, the choice $\eta = \pm 1$ distinguishes between to the
so-called \emph{elliptic-elliptic} and the \emph{hyperbolic-elliptic}
variants of the DS system (see \cite{Sulem}).  
Clearly, the DS system with $\eta = +1$ and $\lambda = 0$ simplifies
to the cubic \emph{nonlinear Schr\"odinger equation} (NLS), which we
consider more generally in the $d$-dimensional case
\begin{equation*}
  i\d_t \psi + \frac{1}{2}\Delta \psi =
   \mu |\psi|^2\psi\ ,\quad  x \in \R^d. 
\end{equation*}
The cubic NLS equation is a canonical model for weakly nonlinear wave
propagation in dispersive media and has numerous applications in e.g.  
nonlinear optics, quantum superfluids, or the description of water
waves, cf. \cite{Sulem}. We shall allow for more
general, gauge invariant, nonlinearities and consider  
\begin{equation}\label{eq:NLS}
  i\d_t \psi + \frac{1}{2}\Delta \psi =
   \mu |\psi|^{2\nu}\psi\ , \quad x \in \R^d \tag{NLS},
\end{equation}
where $\nu \in \N^\star$. For such equations, one usually distinguishes
between \emph{focusing} $\mu <0$ and \emph{defocusing} $\mu >0$
nonlinearities. The sign of $\mu$ has a huge 
impact on the issue of global well-posedness, since it is well known
(cf. \cite{Sulem} for a broad review)  
that for $\mu < 0$ \emph{finite-time blow-up} of solutions may occur
for $d\ge 2$, that is:   
$$
 \lim_{t \to T^*} \| \nabla \psi (t,\cdot) \|_{L^2(\R^d)} = \infty
 \, , \quad T^* < +\infty. 
$$ 
Thus, in general we cannot expect global well-posedness to hold in,
say, $H^1(\R^d)$.  
On the other hand, one might ask about the possibility that even local
(in time) well-posedness fails. To be more precise, we recall the
following definition: 
\begin{definition}\label{def:WP}
Let $\si, s \in \R$. The Cauchy problem for \eqref{eq:genNLS}
is well posed from $H^{s}(\R^d)$ to $H^\si(\R^d)$ if, for all bounded
subset $B\subset H^{s}(\R^d)$, there exist  
$T>0$ and a Banach space $X_T\hookrightarrow C([0,T];H^\si(\R^n))$
such that: 
\begin{enumerate}
\item For all $\varphi\in B \cap H^\infty$, \eqref{eq:genNLS}  
has a unique solution $\psi \in X_T$ with $\psi_{\mid t=0}=\varphi$. 
\item The mapping $\varphi\in (B \cap H^\infty,\| \cdot \|_{H^s})
  \mapsto \psi\in C([0,T];H^\si(\R^n))$ is continuous.
  \end{enumerate}
\end{definition}
The negation of the above definition is called a \emph{lack of
  well-posedness} or \emph{instability}. 
In order to gain a rough idea why instability occurs, we consider the
Cauchy problem of \eqref{eq:NLS} with initial data $\psi_0 \in
H^s(\R^d)$. Under the assumption $\nu \in \N^\star$, the nonlinearity
is smooth, and thus,  local well-posedness (from $H^s(\R^d)$ to
$H^s(\R^d)$) holds  for sufficiently large $s$ ($s>d/2$ does the
job). On the other hand, one should note that \eqref{eq:NLS} is
invariant under \emph{Galilei transformations},  
$$
\psi(t,x) \mapsto e^{iv\cdot x -i |v|^2 t/2}\psi(t,x-vt), \quad v \in \R^d,
$$
which leave the $L^2(\R^d)$ norm invariant. In addition, solutions to
\eqref{eq:NLS} are  invariant under the \emph{scaling symmetry} 
\begin{equation*}
  \psi(t,x) \mapsto \Lambda ^{-1/ \nu} \psi \( \frac{t}{\Lambda^2},
\frac{x}{\Lambda} \), \quad \Lambda >0. 
\end{equation*}
Denoting 
\begin{equation}
  \label{eq:sc}
  s_c : = \frac{d}{2} - \frac{1}{\nu},
\end{equation}
this scaling is easily seen to leave the homogeneous Sobolev space
$\dot H^{s_c}(\R^d)$ invariant and thus we heuristically expect local
well-posedness to hold only in $H^s(\R^d)$ with $s\ge \max \{s_c,
0\}$.   
The reason for this being that for $s <\max \{s_c, 0\}$ and
sufficiently large $\Lambda >0$ we can use the scaling symmetry of 
(NLS) to relate the norm of large solutions at time $t>0$ 
to the norm of small solutions at some time $t^* < t$. In other words, 
the difference between two solutions will immediately become very big
in $H^s(\R^d)$,  
even if they are close to each other initially. 

For the cubic NLS equation we have $s_c = 0$ if $d=2$, and thus
instability should occur for $\psi_0\in H^s(\R^2)$ with
$s<0$. Moreover, we expect 
the same behavior to be true also for the DS system, since the latter
can be written in form of an NLS with non-local perturbation, i.e. 
\begin{equation}\label{eq:DS}
  i\d_t \psi + \frac{1}{2}\(\eta  \d^2_{x_1} + \d^2_{x_2} \) \psi= \lambda
  E \(|\psi|^2\)\psi 
  + \mu |\psi|^2\psi\ , 
  \quad x \in \R^2.
\end{equation}
Here the operator $E$ acting as a Fourier
multiplier on $|\psi|^2$ is defined via 
\begin{equation}\label{eq:E}
\widehat {E (f)}(\xi) = \frac{\xi_1^2}{\xi_1^2+\xi_2^2} \, \widehat f(\xi),
\end{equation}
where $(\xi_1,\xi_2)=\xi\in \R^2$ and $\widehat  f$ denotes the
Fourier transform of  $f$, defined as
\begin{equation}\label{eq:Fourier}
 (\F f)(\xi)\equiv \widehat
 f(\xi)=\frac{1}{(2\pi)^{d/2}}\int_{\R^d}f(x)e^{-i x\cdot \xi} dx.
\end{equation}
With this normalization, we have $\F^{-1}g = \F \check g$, with 
$\check g = g(-\cdot)$. Note that the non-local term in \eqref{eq:DS} scales like the nonlinearity in the cubic NLS equation, since the kernel 
\begin{equation}\label{eq:K} 
\widehat K(\xi) = \frac{\xi_1^2}{\xi_1^2+\xi_2^2} \in L^\infty(\R^2),
\end{equation}
is \emph{homogeneous of degree zero}. We therefore expect instability
of the DS system in Sobolev spaces of negative order.  
It will be one of the main tasks of
this work to rigorously prove this type of instability, which can be
seen as a negative result, complementing the 
well-posedness theorems of \cite{GhSa90} (see also \cite{GaZh08}).
To this end, we shall rely on the framework of \emph{weakly nonlinear
  geometric optics} (WNLGO), developed in \cite{CDS10} for NLS. We shall
henceforth study, as a first step, the interaction of \emph{highly
  oscillatory waves}   
within \eqref{eq:DS} and describe the possible (nonlinear)
\emph{resonances} between them. In our opinion this is interesting in
itself since  
it generalizes the results of \cite{CDS10} and reveals a new
localization property for non-local operators in the high frequency
regime. Moreover, we shall see that possible resonances heavily depend
on the choice of $\eta =\pm 1$.  
\smallbreak

In order to treat the DS system and the NLS equation simultaneously,
we shall from now on consider the following NLS type model 
\begin{equation}
\label{eq:genNLS}
i\d_t \psi + \frac{1}{2} \Delta_\eta \psi =  
\lambda E \(|\psi |^{2\nu}\) \psi + \mu |\psi |^{2\nu} \psi \quad ,
\quad \psi(0,x) = \psi_0(x), 
\end{equation}
with $x\in \R^d$, $\l,\mu\in \R$, and $\nu\in\N^\star$ and a
generalized dispersion of the form 
\begin{equation}
\label{eq:etaLapl}
\Delta_\eta := \sum_{j=1}^d \eta_j \partial_{x_j}^2 , 
\quad \eta_j = \pm 1 ,
\end{equation} 
Furthermore, we generalize the operator $E$ given in \eqref{eq:E} by imposing the following assumption:
\begin{assumption}\label{ass:E} 
The operator $E$ is given by 
\begin{equation*}
  E(f)=K\ast f,\quad K\in \Sch'(\R^d), 
\end{equation*} 
where $\widehat K$ is homogeneous of degree zero, and continuous away from
the origin. 
\end{assumption} 

\begin{remark} For $\lambda =0$ and 
nonuniform signs of the $\eta_j$'s, equation \eqref{eq:genNLS} simplifies to the 
so-called \emph{hyperbolic NLS}, which arises for example in the
description of surface-gravity waves on deep water, cf. \cite{Sulem}.  
\end{remark}

\subsection{Weakly nonlinear geometric optics}
\label{sec:main}  

We aim to understand the interaction of high frequency waves 
within solutions to \eqref{eq:genNLS}. To this end, we consider the
following \emph{semi-classically scaled} model 
\begin{equation}
\label{eq:genWNLGO}
i\eps \d_t u^\eps + \frac{\eps^2}{2} \Delta_\eta u^\eps =  
\eps \lambda E \(|u^\eps |^{2\nu}\) u^\eps 
+ \eps  \mu |u^\eps |^{2\nu} u^\eps \quad , \quad u^\eps(0,x) = u^\eps_0(x),
\end{equation}
where $0 < \eps \ll 1$ denotes a small semi-classical parameter. 
The singular limiting regime where $\eps \to 0$ yields the high
frequency asymptotics for \eqref{eq:genNLS} in a \emph{weakly}
nonlinear scaling   
(note that $\eps$ appears in front of the nonlinearities). The latter is
known to be critical as far as geometric optics is concerned, see
e.g. \cite{CaBook}. 

As in \cite{CDS10}, we shall assume that  
\eqref{eq:genWNLGO} is subject to initial data given by a 
superposition of $\eps$-oscillatory plane waves, i.e.
\begin{equation}
  \label{eq:genWNLGOini}
u^\eps_0(x) = \sum_{j\in J_0}\alpha_j(x)e^{i\k_j\cdot x/\eps}, 
\end{equation}
where for some index set $J_0\subseteq \N$ we are given initial wave
vectors $\k_j \in \R^d$ with corresponding smooth, rapidly decaying
amplitudes $\alpha_j \in \Sch(\R^d,\C)$. Since, in general, we can
allow for countable many $\alpha_j$'s,   
we shall impose the following summability condition:
\begin{assumption} \label{hyp:sum} The initial amplitudes satisfy
\begin{equation*}
  \sum_{j \in J_0 } \<\k_j\>^2 \|\widehat   \alpha _j \|_{L^1\cap L^2} +
\sum_{j \in J_0 } \|\widehat {\Delta \alpha _j} \|_{L^1\cap L^2}  <+ \infty,
\end{equation*}
where $\<\k_j\>:= (1+ |\k_j|^2)^{1/2}$.
\end{assumption}
This condition will become clear in Section~\ref{sec:WNLGO}, where we
justify multiphase weakly nonlinear geometric optics using the
framework of Wiener algebras.
\smallbreak

The initial condition \eqref{eq:genWNLGOini} induces high frequency
oscillations within the solution of \eqref{eq:genWNLGO}.  
The first main result of this work concerns the approximation of the
exact solution $u^\eps$ of \eqref{eq:genWNLGO} by (possibly 
countably many) slowly modulated plane waves. 

\begin{theorem}\label{theo:WNLGO}
Let $d\ge1$, $\lambda , \mu \in \R$ and $\nu\in\N^*$, and let $E$ satisfy Assumption \ref{ass:E}. Consider initial data of the form \eqref{eq:genWNLGOini} 
with $\k_j\in \Z^d$ and $\alpha_j \in \mathcal S(\R^d,\C)$ satisfying
Assumption~\ref{hyp:sum}.   

Then there exist $T>0$, and $C,\eps_0>0$, 
such that for all $\eps \in ]0,\eps_0]$, there exists a unique solution 
$u^\eps\in C([0,T];L^\infty\cap L^2)$ to 
\eqref{eq:genWNLGO}--\eqref{eq:genWNLGOini}. It can be approximated by 
\begin{equation*}
\begin{aligned}
\sup_{t\in [0,T]}
\left\lVert u^\eps(t,\cdot)-u_{\rm app}^\eps(t,\cdot)
\right\rVert_{L^\infty\cap L^2(\R^d)} 
& \Tend \eps 0 0  \quad \text{if } \lambda\neq0, \\
\sup_{t\in [0,T]}
\left\lVert u^\eps(t,\cdot)-u_{\rm app}^\eps(t,\cdot)
\right\rVert_{L^\infty\cap L^2(\R^d)} & \le C\eps \quad \text{if } \lambda=0.
\end{aligned}
\end{equation*}
Here, the approximate solution $u_{\rm app}^\eps\in
C([0,T];L^\infty\cap L^2)$ is given by
\begin{equation*}
u_{\rm app}^\eps(t,x) = \sum_{j\in J} a_j(t,x) 
e^{i \phi_j (t,x) / \eps},
\end{equation*}
where the amplitudes $a_j \in C([0,T]; L^\infty\cap  L^2(\R^d))$ solve
the system  
\eqref{eq:transportsystemgen} and the phases $\phi_j$ are given by 
\begin{equation*}
\phi_j (t,x) = \k_j \cdot x 
- \frac{t}{2} \sum_{\ell=1}^d \eta_\ell \k_{j,\ell}^2 . 
\end{equation*}
In addition the index set $J \subseteq \N$ can be determined  from
$J_0$ by following the approach outlined in
Sections~\ref{sec:NLSphases} and \ref{sec:phasesDS}.
\end{theorem}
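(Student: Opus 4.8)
The plan is to follow the standard weakly nonlinear geometric optics ansatz and turn the PDE \eqref{eq:genWNLGO} with oscillatory data \eqref{eq:genWNLGOini} into a transport system for the amplitudes, then prove convergence (or the $O(\eps)$ estimate) using a Wiener algebra / fixed-point argument. First I would substitute the multiphase ansatz $u^\eps \approx \sum_{j\in J} a_j(t,x) e^{i\phi_j(t,x)/\eps}$ into \eqref{eq:genWNLGO}. The $O(\eps^0)$ terms force each $\phi_j$ to solve the eikonal equation $\partial_t\phi_j + \tfrac12\sum_\ell \eta_\ell(\partial_{x_\ell}\phi_j)^2 = 0$; with the plane-wave initial phase $\k_j\cdot x$ this gives exactly $\phi_j(t,x) = \k_j\cdot x - \tfrac{t}{2}\sum_\ell \eta_\ell \k_{j,\ell}^2$, linear in $x$, so the group velocities are constant and no caustics form — this is why we can work on a fixed time interval $[0,T]$ and in $L^\infty\cap L^2$ rather than in Sobolev spaces with derivative loss. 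Collecting the $O(\eps^1)$ terms yields, for each phase, a Schr\"odinger-type transport equation $i\partial_t a_j + \tfrac12\Delta_\eta a_j = (\text{resonant interaction terms})a$, where on the right one keeps only those nonlinear products $a_{j_1}\cdots\bar a_{j_k}\cdots$ whose combined phase equals $\phi_j$; this is the closure condition that determines the index set $J$ from $J_0$, following Sections~\ref{sec:NLSphases}--\ref{sec:phasesDS}. The crucial new point, relative to \cite{CDS10}, is how the nonlocal operator $E$ acts on a product of oscillatory terms: since $\widehat K$ is homogeneous of degree zero, $E(b\, e^{i\theta\cdot x/\eps})$ behaves, as $\eps\to0$, like $\widehat K(\theta)\, b\, e^{i\theta\cdot x/\eps}$ when $\theta\neq 0$, and like a genuinely nonlocal operator on $b$ when $\theta = 0$; the continuity of $\widehat K$ away from the origin makes this limit well-defined. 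This dichotomy is responsible for the localization phenomenon and explains why the approximation is only $o(1)$ (not $O(\eps)$) when $\lambda\neq 0$: the error in replacing $E$ by its symbol at nonzero frequencies decays but not at rate $\eps$.

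The second step is to make this rigorous. I would set up the transport system \eqref{eq:transportsystemgen} and solve it by a fixed-point argument in $C([0,T]; \mathcal W)$, where $\mathcal W$ is a Wiener-type algebra (functions with $\widehat{\,\cdot\,}\in L^1\cap L^2$), using Assumption~\ref{hyp:sum} to control the $\ell^1$-in-$j$ sum of amplitude norms; the algebra property of $\mathcal W$ handles the power-law nonlinearity, and $\widehat K\in L^\infty$ makes $E$ bounded on $\mathcal W$. Local existence of the exact solution $u^\eps$ on the \emph{same} interval $[0,T]$, uniformly in $\eps$, follows from an analogous fixed-point argument after conjugating out the fast oscillation (or directly, noting that the semiclassical Schr\"odinger group is an isometry on $L^2$ and, crucially, on the Wiener algebra up to constants). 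Then one estimates $w^\eps := u^\eps - u_{\rm app}^\eps$: it solves a Schr\"odinger equation with a source term that is $O(\eps)$ from the corrector/remainder in the ansatz, $O(\eps)$-type contributions from $\Delta_\eta$ hitting the amplitudes, plus the nonlocal discrepancy term $\lambda\big(E(|u_{\rm app}^\eps|^{2\nu}) - \text{symbol approximation}\big)u_{\rm app}^\eps$, which is $o(1)$ when $\lambda\neq0$ and absent when $\lambda=0$. A Gronwall estimate in the Wiener algebra, using its embedding into $L^\infty\cap L^2$, closes the argument.

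The main obstacle is the nonlocal term $E$ acting on oscillatory products: one must show that, uniformly on $[0,T]$, $E\big((a_{j_1}e^{i\phi_{j_1}/\eps})\cdots\big) - \widehat K(\theta)\,(a_{j_1}\cdots)e^{i\theta\cdot x/\eps} \to 0$ in the Wiener algebra, where $\theta$ is the total wave vector, and that the exceptional case $\theta=0$ (where $E$ survives as a true nonlocal operator on the amplitude product) is correctly incorporated into \eqref{eq:transportsystemgen}. This requires a stationary-phase / frequency-localization lemma exploiting precisely the degree-zero homogeneity and the continuity of $\widehat K$ away from $0$: away from frequency $\theta/\eps$ the symbol $\widehat K$ is nearly constant on the $O(1)$-sized frequency support of the amplitude (after rescaling), and one quantifies the error by the modulus of continuity of $\widehat K$ — which gives $o(1)$ but no definite rate, hence the asymmetry between the two cases in the statement. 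The second, more bookkeeping-type difficulty is verifying that the closure procedure generates an index set $J$ for which Assumption~\ref{hyp:sum}-type summability is preserved, so that the fixed-point space is stable; this is where the resonance analysis of Sections~\ref{sec:NLSphases}--\ref{sec:phasesDS} and the dependence on the signs $\eta_j$ enter, since the set of solutions $(j_1,\dots)$ to the resonance relation $\sum\pm\k_{j_i} = \k_j$, $\sum\pm(\sum_\ell\eta_\ell\k_{j_i,\ell}^2) = \sum_\ell\eta_\ell\k_{j,\ell}^2$ depends on $\eta$.
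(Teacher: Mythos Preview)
Your overall strategy matches the paper's: Wiener algebra framework, construction of $u^\eps_{\rm app}$ via the transport system, decomposition of the source term for $w^\eps = u^\eps - u^\eps_{\rm app}$ into pieces, and a Gronwall closure first in $W$ then in $L^2$. Two points deserve correction or sharpening.

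First, a slip: the amplitude equations at order $\eps^1$ are \emph{first-order transport equations}
\[
\partial_t a_j + \sum_\ell \eta_\ell \k_{j,\ell}\,\partial_{x_\ell} a_j = (\text{resonant sums}),
\]
not Schr\"odinger-type. The term $\tfrac{\eps}{2}\Delta_\eta a_j$ is pushed into the remainder (this is the $r_3^\eps$ of the paper, and is trivially $O(\eps)$).

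Second, and more substantively, you are underselling one of the three remainder terms. The paper splits the source into $r_1^\eps$ (nonlocal localization error, $o(1)$ by dominated convergence using continuity of $\widehat K$ away from $0$ --- exactly your ``modulus of continuity'' remark), $r_3^\eps$ (the $\Delta_\eta a_j$ term, $O(\eps)$), and $r_2^\eps$: the contribution of \emph{non-resonant} phase combinations $\phi_{\ell_1}-\phi_{\ell_2}+\cdots+\phi_{\ell_{2\nu+1}}$ that are not characteristic. This $r_2^\eps$ is \emph{not} $O(\eps)$ pointwise in $W\cap L^2$; it is $O(1)$. What is $O(\eps)$ is its Duhamel integral $\int_0^t U^\eps(t-\tau)r_2^\eps(\tau)\,d\tau$, and this requires an integration by parts in $\tau$ exploiting the non-resonance gap $|\,|\k|^2_\eta - \omega\,| \ge 1$ (here the assumption $\k_j\in\Z^d$ avoids small divisors). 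Your proposal lumps this under ``$O(\eps)$ from the corrector/remainder in the ansatz'' without indicating the mechanism; this non-stationary-phase lemma (the paper's Lemma~4.4/Proposition~4.3) is a genuine piece of the argument and also the place where the $X^2$ regularity of the profiles (hence Assumption~\ref{hyp:sum}) is actually used.
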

\begin{remark}
  The assumption $\k_j\in \Z^d$ is introduced to avoid small divisors
  problems. This aspect is discussed in more details in
  \cite{CDS10}. Following the strategy of \cite{CDS10}, we could state
  a more general result here. We have chosen not to do so, for
  the sake of readability. 
\end{remark}
In general we have $J_0 \subseteq J$, due to possible resonances, i.e. the creation of new (characteristic) oscillatory phases
$\phi_j$ not originating 
from the given initial data but solely due to nonlinear interactions.  
The above theorem generalizes the results of \cite{CDS10}, 
in three different directions: 
\begin{enumerate}

\item The approximation result is
extended to $L^2\cap L^\infty$ (in \cite{CDS10} we only proved an
approximation in $L^\infty$).

\item We allow for non-elliptic Schr\"odinger operators 
corresponding to non-uniform sign for the $\eta_j$'s.  
This yields a resonance structure which is different from
the elliptic case (see Section \ref{sec:DSWNLGO}). In particular, one
should note that for   
$d=2$, $\eta_1 =-\eta_2$ and $\k_j=(k,k) \in \R^2$, the 
corresponding phase $\phi_j$ simplifies to $\phi_j(x) = k (x_1 + x_2)$, 
describing $\eps$-oscillations which do \emph{not} propagate 
in time.

\item In comparison to \cite{CDS10} we can now also
take into account 
the non-local Fourier multiplier $E$. This
operator, roughly speaking, behaves like local nonlinearity in the
limit $\eps \to 0$ (see Section \ref{sec:nonloc}).  
The behavior is therefore qualitatively different from earlier results
given in \cite{GMS-p}, where it has been proved that for (slightly more
regular) integral kernels $K$, such that $\langle \xi\rangle \widehat
K(\xi)  \in L^\infty$, no   
new resonant phase can be created by $E$, in contrast to our
work. Notice that the present work shows as a by-product
that the same conclusion also holds for the Schr\"odinger--Poisson system (on
$\R^d$, $d\ge 3$, so $\widehat K(\xi)=c_d/|\xi|^2$), even though in
that case, the kernel  
does not satisfy the above assumption; see Remark~\ref{rem:SP} below
for more details.   
\end{enumerate}

\begin{remark} Finally, we underscore that Theorem \ref{theo:WNLGO}
  includes other NLS type models with non-local
  perturbations than DS,  
provided the corresponding kernel $\widehat K$ is homogeneous of
degree zero and continuous away from the origin. A particular example
is given by the  
\emph{Gross-Pitaevskii equation for dipolar quantum gases}, i.e.
\begin{equation}
  \label{eq:dipole}
  i\d_t \psi + \frac{1}{2}\Delta \psi =  \mu \lvert
  \psi\rvert^2 \psi + \l\(K\ast \lvert\psi\rvert^2\)\psi,\quad x\in
  \R^3,\tag{DGP} 
\end{equation}
where the interaction kernel $K$ is given by 
\begin{equation}\label{dipolekernel}
  K(x) = \frac{1-3\cos^2\theta}{\lvert x\rvert^3}.
\end{equation}
Here $\theta=\theta(x)$ stands for the angle between $x\in \R^3$ and a
given dipole 
axis $n\in \R^3$, with $|n| =1$. In other words $\theta$ is defined via 
$\cos \theta= n\cdot x/\lvert x\rvert$.
In this case, we compute (see \cite{CMS08}), for $\xi\in\R^3\setminus\{0\}$,
\begin{equation*}
 \widehat K(\xi)= \frac{2}{3}(2\pi)^{5/2}
  \(3\cos^2\Theta-1\)
\end{equation*}
where $\Theta$ stands for the angle between $\xi$ and the dipole
axis. 
The model \eqref{eq:dipole} has been introduced in \cite{YiYou} in order to
describe (superfluid) Bose--Einstein condensates of particles with
large magnetic dipole moments. Note that for our analysis, we neglect
possible external potentials $V(x)$,  
usually present in physical experiments.
In this context, rescaling \eqref{eq:dipole} and studying the
asymptotics as $\eps \to 0$ correspond to the \emph{classical limit}
of quantum mechanics.  
\end{remark}

\subsection{Instability and norm inflation}\label{sec:inflation}

The insight gained in the proof of Theorem~\ref{theo:WNLGO} will allow
us to infer instability results of the Cauchy problem corresponding to
\eqref{eq:genNLS}. Let us remark, that  
the first rigorous result on the lack of well-posedness 
for the Cauchy problem of (NLS) in negative order Sobolev spaces goes
back to \cite{KPV01}, where the focusing 
case in $d=1$ was studied. This result was then generalized to $d\ge1$
in \cite{CCT2}, where  the lack of well-posedness for (NLS) from 
$H^s(\R^d)$ to $H^s(\R^d)$, has been proved for all $s<0$ (and
regardless of the sign of the nonlinearity). A general approach to
prove instability  
was  given in \cite{BeTa05}, where the authors studied the
\emph{quadratic} NLS.  
Applying their abstract result \cite[Proposition~1]{BeTa05} to the
models considered above, we prove a lack of well-posedness from
$H^s(\R^d)$ to $H^\si(\R^d)$ for \emph{all} $\si$.  

\begin{proposition}\label{prop:ill}
For all $s,\si<0$, the Cauchy problem for \eqref{eq:NLS}, with $d\ge2$, 
$\nu\in\N^\star$ and $\mu\neq0$  is ill-posed from $H^s(\R^d)$ to
$H^\si(\R^d)$. The same holds true for the Cauchy problem of 
\eqref{eq:DS0}, provided $\lambda+2\mu\neq0$, and for the one of 
\eqref{eq:dipole}, provided $(\lambda,\mu)\neq(0,0)$.
\end{proposition}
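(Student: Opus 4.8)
The plan is to apply the abstract instability criterion \cite[Proposition~1]{BeTa05} to each of the three models, using the weakly nonlinear geometric optics (WNLGO) analysis of Theorem~\ref{theo:WNLGO} to produce the family of exact solutions that makes the norm-inflation mechanism work. The key point of the Bejenaru--Tao framework is that ill-posedness from $H^s$ to $H^\sigma$ follows once one exhibits a sequence of exact solutions whose initial data are bounded (indeed small) in $H^s$, while the solutions themselves become large in $H^\sigma$ at arbitrarily small times; by the scaling symmetry of (NLS), it suffices to arrange this with initial data of size $O(1)$ in $H^s$ producing $O(1)$ solutions in $H^\sigma$ that, after using the scaling, translate into norm inflation. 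Concretely, I would start from a single-phase ansatz $u^\eps_0(x)=\alpha(x)e^{i\k\cdot x/\eps}$ (so $J_0$ is a singleton) and run Theorem~\ref{theo:WNLGO}: the approximate solution is $u^\eps_{\rm app}(t,x)=\sum_{j\in J}a_j(t,x)e^{i\phi_j(t,x)/\eps}$, and one checks that for $t>0$ the modulus $|u^\eps_{\rm app}(t,\cdot)|$ is not identically small, so that $\|u^\eps(t,\cdot)\|_{H^\sigma}\gtrsim \eps^{\sigma}$ (for $\sigma<0$, high-frequency waves of amplitude $O(1)$ have large negative Sobolev norm), whereas $\|u^\eps_0\|_{H^s}\lesssim \eps^{s}\to 0$ can be normalized; rescaling via the scaling symmetry then converts this into the desired norm inflation with loss of regularity.

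First I would treat the pure (NLS) case, $\lambda=0$: here Theorem~\ref{theo:WNLGO} gives the stronger $O(\eps)$ approximation, and the resonance set $J$ is governed by the phase relations of Section~\ref{sec:NLSphases}; the condition $\mu\neq 0$ guarantees that the leading transport system \eqref{eq:transportsystemgen} is genuinely nonlinear, so the profile $a_\k(t,\cdot)$ is modified in a nontrivial, computable way (e.g. a nonlinear phase shift $|a_\k(0,x)|^{2\nu}$-dependent) that prevents cancellation — this is what drives the instability. For (DS), I would use the representation \eqref{eq:DS} and apply Theorem~\ref{theo:WNLGO} with the kernel \eqref{eq:K}; the relevant structural quantity is the effective self-interaction coefficient of the phase $\phi_\k$, which combines the local contribution $\mu$ and the non-local one $\lambda\widehat K(\k)$. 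By the localization phenomenon for non-local perturbations (Section~\ref{sec:nonloc}), on the self-interaction of a single wave $E$ acts, to leading order, like multiplication by $\widehat K(\k)$; choosing $\k=(1,0)$ (or any $\k$ with $\k_2=0$) gives $\widehat K(\k)=1$, so the effective coefficient is $\lambda+\mu$... — more precisely, tracking the factor $2\nu=2$ in the cubic case, the coefficient that appears is proportional to $\lambda+2\mu$ (matching the hypothesis $\lambda+2\mu\neq 0$), and one arranges the phase $\k$ so that this does not vanish. For (DGP), the same scheme applies with $\widehat K(\xi)=\tfrac{2}{3}(2\pi)^{5/2}(3\cos^2\Theta-1)$; since this is non-constant, for any $(\lambda,\mu)\neq(0,0)$ one can select a direction $\k$ making the effective coefficient $\mu+\lambda\widehat K(\k)$ nonzero (if $\lambda\neq 0$ pick $\Theta$ with $3\cos^2\Theta-1$ chosen to avoid the single bad value $-\mu/\lambda$ up to the constant; if $\lambda=0$ use $\mu\neq0$), and Theorem~\ref{theo:WNLGO} applies verbatim since $\widehat K$ is homogeneous of degree zero and continuous away from the origin.

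The main obstacle I expect is verifying the precise hypotheses of \cite[Proposition~1]{BeTa05} — in particular producing the required \emph{two} sequences of solutions (or a solution and a perturbation) that stay close in $H^s$ at $t=0$ but separate in $H^\sigma$ instantaneously, rather than merely one blowing-up sequence. This forces one to control not just $u^\eps$ but the difference of two WNLGO solutions with nearby (but distinct) profiles $\alpha$, and to show the leading nonlinear dynamics genuinely amplifies that difference; the sign conditions ($\mu\neq0$; $\lambda+2\mu\neq0$; $(\lambda,\mu)\neq(0,0)$) are exactly what is needed for the relevant interaction coefficient in \eqref{eq:transportsystemgen} to be nonzero, ensuring the amplification is not spurious. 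A secondary technical point is that Theorem~\ref{theo:WNLGO} requires $\k_j\in\Z^d$, so the chosen wave vectors must be taken in $\Z^d$ (e.g. $\k=(1,0,\dots,0)$), which is harmless for all three models; and one must check the regularity bookkeeping converting the $L^\infty\cap L^2$ approximation into an $H^\sigma$ lower bound and an $H^s$ upper bound, which is where the restriction $d\ge 2$ (so that $s_c\le 0$ in the cubic case) and $s,\sigma<0$ enter. Once these are in place, the scaling symmetry of \eqref{eq:genNLS} upgrades the construction to norm inflation with infinite loss of regularity, exactly as in \cite{CCT2,BeTa05}.
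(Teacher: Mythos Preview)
There is a genuine gap in your approach, rooted in a sign error about negative Sobolev norms. You assert that ``for $\sigma<0$, high-frequency waves of amplitude $O(1)$ have large negative Sobolev norm'', and from this you want $\|u^\eps(t,\cdot)\|_{H^\sigma}\gtrsim \eps^{\sigma}\to\infty$. The opposite is true: a function of the form $f(x)e^{i\kappa\cdot x/\eps}$ with $\kappa\neq 0$ has $H^\sigma$ norm of order $\eps^{|\sigma|}\to 0$ when $\sigma<0$, because its Fourier transform lives near $\kappa/\eps$ where the weight $\langle\xi\rangle^{2\sigma}$ is tiny (this is precisely Lemma~\ref{lem:estimate}, first point, with $\beta=1$). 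So with a single-phase ansatz, both the initial datum and the solution are \emph{small} in $H^s$ and in $H^\sigma$, and no inflation occurs. The self-interaction of a single mode only produces a nonlinear phase shift on that same oscillatory mode; it does not create a non-oscillating component.

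The mechanism the paper actually uses is the one you bypass: take \emph{three} initial phases $\kappa_1,\kappa_2,\kappa_3$ (as in Example~\ref{ex:key}) so that the resonance $\kappa_1-\kappa_2+\kappa_3=0$ creates the \emph{zero mode}. That mode carries no fast oscillation, hence has $H^\sigma$ norm of order one for every $\sigma$, while the three oscillatory initial phases make $\|\varphi\|_{H^s}\approx\eps^{|s|}\to 0$. The coefficients you correctly identify ($\mu$ for (NLS), $\lambda+2\mu$ for (DS), and a suitable combination for (DGP)) are exactly the coefficients in front of the newly created zero mode in the first Picard iterate; Lemmas~\ref{lem:a0NLS}, \ref{lem:a0DS}, \ref{lem:a0nonlocNLS} compute them. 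Finally, note that \cite[Proposition~1]{BeTa05} does not require two sequences of exact solutions: it reduces ill-posedness to the failure of the continuity estimate $\|D(\varphi)\|_{L^\infty_t H^\sigma}\lesssim\|\varphi\|_{H^s}^{2\nu+1}$ for the \emph{first Picard iterate} $D$, so one plugs the three-phase datum into $D$ directly and reads off the contradiction $1\lesssim\eps^{-s(2\nu+1)}$.
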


\begin{remark} Our result excludes the case $\l+2\mu=0$, which
  corresponds to the a situation in which the DS system is 
  known to be completely integrable, see e.g. \cite{AbCl}. The
  algebraic structure of the equation is indeed very peculiar then,
  since there exists a Lax pair.  
\end{remark}

The proof of Proposition \ref{prop:ill} is outlined in
Appendix~\ref{sec:ill}, following the ideas of \cite{BeTa05}.  Note,
however, that this approach is not 
constructive as it relies on the norm inflation for the first Picard
iterate. As we shall see in Section~\ref{sec:outline}, weakly
nonlinear geometric optics indeed allows us to infer a stronger result
than the one stated above, namely \emph{norm inflation}. To this end,
let us recall that in \cite{CDS10}, weakly nonlinear geometric optics
was used to prove instability results for NLS equations on
$\T^d$. There, we used initial data corresponding to two non-zero
amplitudes $\alpha_0,\alpha_1$ (one of which carried no
$\eps$-oscillation).  
By proving a transfer of energy (inspired by the ideas from 
\cite{CCT2} and \cite{CCTper}) from high to low frequencies 
(i.e. the zero frequency in fact) we were able to conclude instability. 
In the present work we shall start from three non-zero initial modes,
which, via nonlinear interactions, will be shown to always generate
the zero mode. This phenomenon is geometrically possible as soon as a
multidimensional framework is considered. From this fact we shall  
infer norm inflation. The price to pay in this approach via WNLGO is
an unnatural condition  on the initial Sobolev space:
\begin{theorem}\label{theo:genNLSinflation}
Consider either \eqref{eq:NLS} with $d\ge 2$, $\mu\neq0$ 
and $\nu\in \N^*$, or \eqref{eq:DS0} with $\lambda+2\mu\neq0$, 
or \eqref{eq:dipole} with $(\lambda,\mu)\neq(0,0)$.  
We can find a sequence of initial data $(\varphi_n)_{n\in \N}\in \Sch(\R^d)$, 
with
\begin{equation*}
  \|\varphi_n\|_{H^{-1/(2\nu)}(\R^d)}\Tend n {+\infty} 0,
\end{equation*}
and $t_n\to 0$ such that the solutions $\psi_n$ 
with $\psi_{n\mid t=0}=\varphi_n$ satisfy 
\begin{equation*}
  \|\psi_n(t_n)\|_{H^\si(\R^d)}\Tend n {+\infty} +\infty,\quad \forall
  \si\in \R.  
\end{equation*}
\end{theorem}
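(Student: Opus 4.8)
The plan is to use Theorem~\ref{theo:WNLGO} to construct solutions whose high-frequency structure, via nonlinear resonant interaction, pumps energy into the zero mode, and then to exploit the scaling symmetry of \eqref{eq:genNLS} to convert this energy transfer into norm inflation at times going to zero. Concretely, I would first fix (in Sections~\ref{sec:NLSphases} and \ref{sec:phasesDS}, whose results we are entitled to assume) three initial wave vectors $\k_1,\k_2,\k_3\in\Z^d$ in general position so that the resonance relations forced by the $2\nu+1$-linear nonlinearity produce the null phase $\phi\equiv 0$; this is where a genuinely multidimensional $d\ge2$ is needed, since in one dimension the characteristic variety does not allow such a resonance without degeneracy. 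Theorem~\ref{theo:WNLGO} then gives an approximate solution $u^\eps_{\rm app}(t,x)=\sum_{j\in J}a_j(t,x)e^{i\phi_j(t,x)/\eps}$ with $0\in J$, valid on a fixed time interval $[0,T]$, and—crucially—the amplitude $a_0(t,\cdot)$ attached to the zero phase is nonzero for some $t\in(0,T]$, its size being controlled from below by the transport system \eqref{eq:transportsystemgen} evaluated along the resonance. The point is that $a_0$ carries no $\eps$-oscillation, so its $L^2$ (and hence $H^\si$) norm is $O(1)$ and does not benefit from any smallness in $\eps$.

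Next I would undo the semiclassical scaling. A solution $u^\eps$ of \eqref{eq:genWNLGO} on $[0,T]$ corresponds, via $\psi(t,x)=\eps^{-1/(2\nu)}u^\eps(\eps t,\eps x)$ (or the analogous rescaling adapted to \eqref{eq:genNLS}; this is exactly the scaling symmetry recorded in the introduction, with $\Lambda\sim\eps^{-1}$), to a solution $\psi^\eps$ of \eqref{eq:genNLS} existing up to time $t^\eps\sim\eps^{-1}T$—or, reading it the other way, with initial data of size $\eps^{-1/(2\nu)}$ rescaled in space by $\eps$. The homogeneous Sobolev norm of critical index $s_c=d/2-1/\nu$ is scale invariant; for the specific index $s=-1/(2\nu)$ appearing in the statement one computes directly that $\|\varphi^\eps\|_{H^{-1/(2\nu)}(\R^d)}\to 0$ as $\eps\to0$, because the rescaling concentrates the data and the negative regularity beats the amplitude growth. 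Meanwhile, at the rescaled time $t_\eps$ corresponding to the instant where $a_0$ has become $O(1)$, the zero-mode piece of $\psi^\eps$ contributes, after rescaling, a term whose $H^\si$ norm is $\sim\eps^{-\gamma(\si)}$ with $\gamma(\si)>0$ for every $\si\in\R$ (the gain coming from the space dilation, since the zero mode is smooth and localized). Setting $\varphi_n:=\varphi^{\eps_n}$ and $t_n:=t_{\eps_n}$ for a sequence $\eps_n\downarrow0$ then yields $\|\varphi_n\|_{H^{-1/(2\nu)}}\to0$, $t_n\to0$, and $\|\psi_n(t_n)\|_{H^\si}\to+\infty$ for all $\si$.

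I expect the main obstacle to be twofold. First, one must ensure that the approximation error in Theorem~\ref{theo:WNLGO}, which is only $o(1)$ in $L^\infty\cap L^2$ when $\lambda\neq0$ (and $O(\eps)$ when $\lambda=0$), does not swamp the $O(1)$ zero-mode amplitude \emph{after} the rescaling amplifies everything by $\eps^{-\gamma(\si)}$; this requires choosing $t_n$ carefully so that $a_0(t_n)$ is bounded below by a fixed constant while the error bound is applied at the semiclassical level before the blow-up of the rescaling, i.e. the lower bound on $\|\psi_n(t_n)\|_{H^\si}$ must be extracted from $u^\eps_{\rm app}$ and the error transferred with the same scaling factor but controlled by its smallness in $\eps$. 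Since the error is $o(1)$ uniformly and the amplification factor $\eps^{-\gamma}$ is a fixed power, a quantitative version of the approximation—or simply splitting $\psi_n(t_n)$ into rescaled-$a_0$ plus rescaled-remainder plus rescaled-error and noting the first term alone diverges faster—suffices, but the bookkeeping must be done with care, and one may need the $L^2\cap L^\infty$ strength of the new approximation rather than the $L^\infty$-only bound of \cite{CDS10}. Second, one must verify the nondegeneracy of the resonant generation of the zero mode for each of the three models \eqref{eq:NLS}, \eqref{eq:DS0}, \eqref{eq:dipole} under the stated sign conditions ($\mu\neq0$; $\lambda+2\mu\neq0$; $(\lambda,\mu)\neq(0,0)$)—this is precisely where the excluded integrable case $\lambda+2\mu=0$ for DS enters, since there the relevant coefficient in the transport system for $a_0$ vanishes and no energy is transferred to the zero frequency.
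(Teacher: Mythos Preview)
Your overall strategy matches the paper's: seed three resonant modes generating the zero phase, invoke Theorem~\ref{theo:WNLGO} on $[0,T]$, and rescale back to \eqref{eq:genNLS}. The two obstacles you isolate (approximation error versus the $O(1)$ zero mode in $H^\si$; nondegeneracy of $\d_t a_{0\mid t=0}$ under the sign hypotheses) are precisely those the paper handles, and your remark that the $L^2$---not just $L^\infty$---approximation is essential is correct.

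Two concrete points, however, break your argument as written. First, the relevant rescaling is not the scaling symmetry of \eqref{eq:NLS} but the conversion between \eqref{eq:genWNLGO} (with its $\eps$ in front of the nonlinearity) and \eqref{eq:genNLS}; with $J=1$ the paper's one-parameter family reads $u^\eps(t,x)=\eps^{\beta/(2\nu)}\psi(\eps^\beta t,\eps^{(\beta-1)/2}x)$, and the optimal choice is $\beta=1$, which carries \emph{no space dilation}. In particular your formula $\psi(t,x)=\eps^{-1/(2\nu)}u^\eps(\eps t,\eps x)$ has the time direction inverted: with it, $u^\eps$ on $[0,T]$ corresponds to $\psi$ on $[0,T/\eps]$, and recovering $u^\eps(\tau,\cdot)$ at a fixed $\tau>0$ forces $t_n=\tau/\eps\to\infty$, not $0$. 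The correct relation $\psi(s,y)=\eps^{-1/(2\nu)}u^\eps(s/\eps,y)$ gives $t_n=\eps\tau\to0$, and the norm growth comes entirely from the amplitude prefactor $\eps^{-1/(2\nu)}$ applied to $a_0(\tau)$, whose $H^\si$ norm is of order one for $\si\le0$ while the oscillating modes and the $o(1)$ error are small in $H^\si\supset L^2$; the case $\si>0$ then follows trivially. Your ``gain from space dilation'' mechanism is therefore not what is operating.

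Second, at the exact threshold $s=-1/(2\nu)$ this computation yields only $\|\varphi_n\|_{H^s}^2\lesssim\eps^{-1/\nu+2|s|}=O(1)$, not $o(1)$. Reaching the endpoint index in the statement requires a logarithmic correction: multiply the initial profiles by a factor such as $|\ln\eps|^{-\delta}$, so that $\|\varphi_n\|_{H^{-1/(2\nu)}}\to0$, while the zero mode---being $(2\nu+1)$-linear in the data---loses only a polylogarithmic factor and $\eps^{-1/(2\nu)}|\ln\eps|^{-(2\nu+1)\delta}\to\infty$ still holds. Without this step Theorem~\ref{theo:genNLSinflation} as stated is not reached.
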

Unlike in the general approach of \cite{BeTa05}, in the proof of
Theorem~\ref{theo:genNLSinflation}, we construct explicitly the sequence
$(\varphi_n)_{n\in \N} $, as well as an approximation of
$(\psi_n)_{n\in \N} $ (which 
can be deduced from WNLGO). Note however, that we require $d\ge 2$,  
since our proof demands a multidimensional setting. 
Also, the reason why we restrict ourselves to \eqref{eq:NLS}, 
\eqref{eq:DS0} and \eqref{eq:dipole} in the above instability 
results is that we do not exhibit adequate initial data for 
\eqref{eq:genNLS} in its full generality.  
In \cite{CCT2}, norm inflation for (NLS) was shown from 
$H^s$ to $H^s$, under the assumption $s\le-d/2$. 
Theorem~\ref{theo:genNLSinflation} improves this previous
result in three different aspects:
\begin{enumerate}
\item We consider a more general NLS type model, including non-local
  perturbations (in particular the DS system). 
\item The range of $s$ is larger, since we assume $s\le -1/(2\nu)$, in a
  setting where we always have $1/(2\nu)<d/2$ (recall that $d\ge 2$ by
  assumption).  
\item The target space is larger: \emph{all} the Sobolev norms
  become unbounded at the same time. 
\end{enumerate}

\begin{remark} In the case $0<s<s_c$, the norm inflation
result proved in \cite{CCT2} was improved to a loss of regularity
result in \cite{AlCa09,Th08}, after \cite{Lebeau05} in the case of the wave
equation (roughly speaking, one proves norm inflation from $H^s$ to
$H^\si$, with 
$\si>\si_0$ for some  $\si_0<s$).
In the periodic setting $x\in \T^d$, 
instability results (from $H^s(\T^d)$ to $H^\si(\T^d)$ for all $\si\in
\R$) were proved in \cite{CCTper} in the case $d=1$, and then generalized to
the case $d\ge 1$ in \cite{CDS10} (however, the phenomenon proved there is
just instability, not norm inflation). Finally, 
we also like to mention 
the beautiful result by Molinet \cite{Mo09} in the case $x\in \T$ 
and $\nu=1$, and the recent result by Panthee \cite{Pa-p} which shows
that the flow map for BBM equation fails to be continuous at the
origin from $H^s(\R)$ to ${\mathcal D}'(\R)$ for all $s<0$. 
\end{remark}

Our last result concerns the (generalized) NLS equation only,
i.e. \eqref{eq:genNLS} with  $\lambda =0$, 
where we can prove norm inflation for a larger range of Sobolev indices.

\begin{theorem}\label{theo:inflation2}
Let $d\ge 2$, $\nu\in \N^*$, $\mu\in\R^*$, $\l=0$,
and $s<-1/(1+2\nu)$.  
We can find a sequence of initial data $\varphi_n\in \Sch(\R^d)$, 
with
\begin{equation*}
  \|\varphi_n\|_{H^s(\R^d)}\Tend n {+\infty} 0,
\end{equation*}
and $t_n\to 0$ such that the solutions $\psi_n$ to \eqref{eq:genNLS}
with $\psi_{n\mid t=0}=\varphi_n$ satisfy 
\begin{equation*}
  \|\psi_n(t_n)\|_{H^\si(\R^d)}\Tend n {+\infty} +\infty,\quad \forall
  \si\in \R. 
\end{equation*}
\end{theorem}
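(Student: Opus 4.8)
The plan is to mimic the proof of Theorem~\ref{theo:genNLSinflation}, but to exploit the absence of the non-local term ($\lambda=0$) in order to run the geometric optics argument with a \emph{smaller} initial amplitude, hence beating the critical index $-1/(2\nu)$ down to $-1/(1+2\nu)$. The rescaling that relates \eqref{eq:genNLS} (with $\lambda=0$) to the semiclassical model \eqref{eq:genWNLGO} is the usual one: if $u^\eps$ solves \eqref{eq:genWNLGO}, then for suitable powers $\delta=\delta(\eps)$ and time scaling, $\psi(t,x) = \eps^{\beta}\, u^\eps(\eps t, x)$ (up to adjusting constants) solves \eqref{eq:genNLS}; the point is that when $\lambda=0$ one has the extra freedom of an overall amplitude factor $\delta$ in front of $u^\eps_0$, because the nonlinearity is homogeneous and there is no competing linear-in-$\psi$ non-local term forcing $\delta\sim 1$. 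Concretely I would set initial data $\psi_n(0,x) = \delta_n\, \eps_n^{-1/(2\nu)} \sum_{j\in J_0} \alpha_j(x) e^{i\k_j\cdot x/\eps_n}$ with three nonzero modes $\k_0,\k_1,\k_2\in\Z^d$ arranged (as in Section~\ref{sec:outline}) so that a resonance produces the zero mode, and with $\delta_n\to 0$ slowly, $\eps_n\to 0$.

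First I would record the exact scaling bookkeeping: the $H^s$ norm of a single wave packet $\delta\,\eps^{-1/(2\nu)}\alpha(x)e^{i\k\cdot x/\eps}$ is $\sim \delta\,\eps^{-1/(2\nu)}\,\eps^{-s}\,\|\alpha\|_{\dot H^s}$ for $s<0$ and $\k\neq 0$ (the $\eps^{-s}$ coming from the frequency $|\k|/\eps$), so the initial $H^s$ norm is $\sim \delta\,\eps^{-s-1/(2\nu)}$. Requiring this to tend to zero forces $\delta \lesssim \eps^{\,s+1/(2\nu)+\text{(something positive)}}$, which is a genuine constraint precisely because $s+1/(2\nu)$ may be negative. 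Second, I would invoke the geometric optics approximation: after undoing the scaling, $\psi_n(t_n)$ is well approximated (in $L^\infty\cap L^2$, hence in $H^\si$ for $\si\le 0$ after interpolation, and — crucially — one needs to upgrade to all $\si$) by $\delta_n\,\eps_n^{-1/(2\nu)}\sum_{j\in J} a_j(t,x)e^{i\phi_j(t,x)/\eps_n}$ at a time $t\sim 1$ in the rescaled variable, i.e. $t_n\sim \eps_n\to 0$. The zero mode $a_{j_\star}$ with $\phi_{j_\star}\equiv 0$ is nonzero and $O(1)$ at such a time by the resonance computation of Sections~\ref{sec:NLSphases}--\ref{sec:phasesDS}; because it carries no oscillation, its $H^\si$ norm is $\sim \delta_n\,\eps_n^{-1/(2\nu)}\,\|a_{j_\star}(t_n,\cdot)\|_{H^\si}$, which for every fixed $\si$ is $\gtrsim \delta_n\,\eps_n^{-1/(2\nu)}$. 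Choosing $\delta_n$ so that $\delta_n\,\eps_n^{-s-1/(2\nu)}\to 0$ while $\delta_n\,\eps_n^{-1/(2\nu)}\to+\infty$ is possible exactly when $-s-1/(2\nu) > \text{(the exponent governing how much room the error term leaves us)}$; the honest accounting of the error — the geometric optics remainder is $O(\eps_n)$ times the profile, times the amplitude factor, i.e. $O(\delta_n\,\eps_n^{-1/(2\nu)}\,\eps_n)=O(\delta_n\,\eps_n^{1-1/(2\nu)})$ in the rescaled norm, and after transferring to $H^\si$ one picks up at worst $\eps_n^{-\si}$ from the oscillatory modes $j\neq j_\star$ — is what pins the threshold at $s<-1/(1+2\nu)$.

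The main obstacle, and the step I would spend the most care on, is this last error/threshold bookkeeping: one must check that the $H^\si$ norm of the \emph{difference} $\psi_n(t_n) - \psi_{n,\mathrm{app}}(t_n)$, and the $H^\si$ norms of the unwanted oscillatory modes $a_j e^{i\phi_j/\eps_n}$ with $\phi_j\not\equiv 0$ (whose norms blow up like $\eps_n^{-\si}$ for $\si>0$), are \emph{dominated} by the contribution of the zero mode $\delta_n\,\eps_n^{-1/(2\nu)}\|a_{j_\star}\|_{H^\si}$. This is a competition between $\eps_n^{-\si}$ growth of the oscillatory pieces and the $\delta_n\,\eps_n^{-1/(2\nu)}$ growth of the zero piece, and it is won only by taking $\delta_n\to 0$ slowly enough — more precisely, for any prescribed $\si$ one needs $\delta_n\,\eps_n^{-1/(2\nu)}\gg \delta_n\,\eps_n^{-1/(2\nu)-\si}\cdot 0$... no: one needs $\delta_n \gg \eps_n^{-1/(2\nu)+\,?}$; writing out the inequalities $\delta_n\eps_n^{-s-1/(2\nu)}\to0$ and $\delta_n\eps_n^{\rho(\si)}\to\infty$ for the relevant exponent $\rho(\si)$ coming from the worst oscillatory term, and checking they are compatible for a single sequence $(\delta_n,\eps_n)$ simultaneously for all $\si$, forces $-s > 1/(1+2\nu)$. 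Assuming the geometric optics apparatus of Theorem~\ref{theo:WNLGO} and the resonance analysis are in hand, the remaining work is: (i) fix the three initial wave vectors in $\Z^d$ ($d\ge 2$) producing the zero resonant mode — this is where $d\ge2$ is used and is essentially the construction already set up in Section~\ref{sec:outline}; (ii) verify the zero-mode amplitude $a_{j_\star}$ is not identically zero at times $t\sim1$, which is an ODE/transport nondegeneracy statement following from \eqref{eq:transportsystemgen} and $\mu\neq0$; and (iii) choose $\eps_n\to0$ and $\delta_n\to0$ to satisfy the compatible system of smallness/largeness conditions, then un-rescale to read off $t_n=\eps_n t\to 0$, $\|\varphi_n\|_{H^s}\to0$, and $\|\psi_n(t_n)\|_{H^\si}\to\infty$ for every $\si$.
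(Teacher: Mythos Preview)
Your overall strategy --- introduce an extra amplitude parameter $\delta_n\to 0$ in front of the initial data and exploit the $\O(\eps)$ rate available when $\lambda=0$ --- is essentially the paper's own approach in a different parametrization. The paper writes the semiclassical equation with coupling $\eps^J$, $1\le J<2$; your $\delta_n$ is related to the paper's $J$ by $\delta_n^{2\nu}\sim\eps_n^{J-1}$. So the route is the same, but there is a genuine gap in your execution.

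The gap is your claim that the generated zero mode $a_{j_\star}$ is ``nonzero and $O(1)$''. It is not. With your scaling $\psi_n(0,x)=\delta_n\eps_n^{-1/(2\nu)}\sum\alpha_j e^{i\k_j\cdot x/\eps_n}$, the rescaled function $u^\eps(t,x)=\delta_n^{-1}\eps_n^{1/(2\nu)}\psi_n(\eps_n t,x)$ solves the semiclassical equation with coupling $\mu\delta_n^{2\nu}$ (not $\mu$), so the transport system for the profiles reads
\[
\partial_t a_j+\k_j\cdot\nabla a_j=-i\mu\,\delta_n^{2\nu}\sum_{I_j}a_{\ell_1}\overline a_{\ell_2}\cdots a_{\ell_{2\nu+1}},
\qquad a_{j\mid t=0}=\alpha_j .
\]
Hence $a_0(t)=O(\delta_n^{2\nu})$ at $t\sim 1$, and the zero-mode contribution to $\|\psi_n(t_n)\|_{H^\si}$ is $\sim\delta_n^{2\nu+1}\eps_n^{-1/(2\nu)}$, not $\delta_n\eps_n^{-1/(2\nu)}$. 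With your stated pair of conditions $\delta_n\eps_n^{-s-1/(2\nu)}\to 0$ and $\delta_n\eps_n^{-1/(2\nu)}\to\infty$, you would get the conclusion for \emph{all} $s<0$, which is too strong. The correct pair is
\[
\delta_n\,\eps_n^{|s|-1/(2\nu)}\to 0
\quad\text{and}\quad
\delta_n^{2\nu+1}\,\eps_n^{-1/(2\nu)}\to\infty ,
\]
together with $\delta_n^{2\nu}\gg\eps_n$ so that the $O(\eps_n)$ geometric-optics remainder does not swamp the zero mode. Compatibility of the first two forces $|s|>\frac{1}{2\nu}-\frac{1}{2\nu(2\nu+1)}=\frac{1}{2\nu+1}$, which is exactly the threshold $s<-1/(1+2\nu)$. (This is the paper's computation in the $J$-parametrization: $\eps^{2(J-1)}$ is precisely your missing factor $\delta_n^{4\nu}$.)

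A secondary point: your explanation of why $\lambda=0$ buys something (``no competing linear-in-$\psi$ non-local term forcing $\delta\sim 1$'') is not the right reason --- the non-local term $E(|\psi|^{2\nu})\psi$ is just as $(2\nu+1)$-homogeneous as the local one. What $\lambda=0$ actually provides is the \emph{rate} $\|u^\eps-u^\eps_{\rm app}\|_{L^2}=\O(\eps)$ (rather than merely $o(1)$), which is what allows you to detect a zero mode of size $\delta_n^{2\nu}\gg\eps_n$ behind the approximation error. Once you correct the size of $a_0$ and redo the inequalities, your argument goes through and coincides with the paper's.
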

\begin{remark} We believe that the restriction
$s<-1/(1+2\nu)$ is only due to our approach, and we expect the result to
hold under the mere assumption $s<0$. Note that for $\lambda=0$ and
nonuniform signs of the $\eta_j$'s  
  in $\Delta_\eta$, the above result concerns the hyperbolic NLS.
\end{remark}

To conclude this paragraph, we point out that negative order Sobolev
spaces may go against the intuition. In Section~\ref{sec:moreweakly}
we shall study an asymptotic regime (for $\eps \to 0$) where the 
nonlinearity is ``naturally'' negligible at leading order in, say, $L^2\cap
L^\infty$, but fails to be negligible in some negative order Sobolev
spaces. This strange behavior of 
negative order Sobolev spaces is further illustrated by very basic
examples given in Appendix~\ref{sec:sobolev}.  
\medskip

\textbf{Notation.}
Let $(\Lambda^\eps)_{0<\eps\le 1}$ and $(\Upsilon^\eps)_{0<\eps\le 1}$
be two families of positive real numbers. 
\begin{itemize}
\item We write $\Lambda^\eps \ll \Upsilon^\eps$ if
$\displaystyle \limsup_{\eps\to 0}\Lambda^\eps/\Upsilon^\eps =0$.
\item We write $\Lambda^\eps \lesssim \Upsilon^\eps$ if 
$\displaystyle \limsup_{\eps\to 0}\Lambda^\eps/\Upsilon^\eps <\infty$.
\item We write $\Lambda^\eps \approx \Upsilon^\eps$ (same order of
  magnitude) if $\Lambda^\eps \lesssim
  \Upsilon^\eps$ and $\Upsilon^\eps \lesssim \Lambda^\eps$. 
\end{itemize}

\section{Interaction of high frequency waves in NLS type models}

In this section we shall study the interaction of high frequency waves
for the generalized NLS type equation \eqref{eq:genNLS}. To this end,
we  
shall first recall some results from  \cite{CDS10}, where the usual
case of NLS with elliptic dispersion is treated.  

\subsection{Geometric optics for elliptic NLS} 
\label{sec:NLSWNLGO} We consider the equation
\begin{equation}
  \label{eq:NLSsemi}
  i\eps \d_t u^\eps +\frac{\eps^2}{2}\Delta u^\eps = \eps \mu 
  |u^\eps|^{2\nu} u^\eps,\quad x\in \R^d,
\end{equation} 
with $d\ge1$, $\mu\in \R$ and $\nu\in \N^\ast$. The initial data is
supposed to be given by a superposition of highly oscillatory plane
waves, i.e. 
\begin{equation}\label{eq:DSWNLGOini}
  u^\eps(0,x)= \sum_{j\in J_0}\alpha_j(x) e^{i\k_j\cdot x/\eps},
\end{equation}
where for some index set $J_0\subseteq \N$ we are given initial wave
vectors 
\begin{equation*}
  \Phi_0=\{\k_j \ ,\ j\in J_0\} 
\end{equation*}
and smooth, rapidly decaying
amplitudes $\alpha=(\alpha_j) \in \Sch(\R^d)$. We seek an approximation of the
exact solution $u^\eps$ in the following form 
\begin{equation}\label{eq:approx}
  u^\eps(t,x)\Eq \eps 0 u^\eps_{\rm app}(t,x)= \sum_{j\in
    J}a_{j}(t,x)e^{i \phi_j(t,x)/\eps}. 
\end{equation}
As we shall see in the next subsection, the characteristic phases
$\phi_j$ will be completely determined by the set of of \emph{relevant
  wave vectors}: 
\begin{equation*}
  \Phi=\{\k_j \ ,\ j \in J\} \supseteq \Phi_0,
\end{equation*}
In order to rigorously prove an approximation of the form
\eqref{eq:approx}, there are essentially four steps needed: 
\begin{enumerate}
\item Derivation of the set $\Phi$.
\item Derivation of the amplitude equations, determining the $a_j$'s.
\item Construction of the approximate solution.
\item Justification of the approximation. 
\end{enumerate}
 In this section, we address the first two steps only. The last two points
 are dealt with in Sections~\ref{sec:construct} and 
 \ref{sec:WNLGO}, respectively.  

 \subsubsection{Characteristic phases}
 \label{sec:NLSphases}
Plugging the approximation \eqref{eq:approx} into \eqref{eq:NLSsemi},
and comparing equal powers of $\eps$, we find that the leading order
term is of order 
$\O(\eps^0)$. It can be made identically zero, if for all $j\in J$:
\begin{equation}\label{eq:NLSeikonal}
  \d_t\phi_j + \frac{1}{2}|\nabla \phi_j|^2 =0 .
\end{equation}
This \emph{eikonal equation} determines the characteristic phases
$\phi_j(t,x)\in \R$, entering the approximation \eqref{eq:approx}. In
view of \eqref{eq:DSWNLGOini},  
the eikonal equation is supplemented with initial data
$\phi_j(0,x)=\k_j\cdot x$, from which we can compute explicitly  
\begin{equation} \label{eq:NLSphi_j}
  \phi_j(t,x)= \k_j\cdot x-\frac{t}{2}|\k_j|^2. 
\end{equation}
Next, let $(\k_{\ell_1},\dots,\k_{\ell_{2\nu+1}})$ be a set of given
wave vectors. The  
corresponding nonlinear interaction in $|u^\eps|^{2\nu}u^\eps$ is then given by
\begin{equation*}
  a_{\ell_1}\overline a_{\ell_2} \dots
  a_{\ell_{2\nu+1}}e^{i(\phi_{\ell_1}- \phi_{\ell_2}+\dots
    +\phi_{\ell_{2\nu+1}})/\eps}. 
\end{equation*}
The resulting phase $\phi = \phi_{\ell_1}- \phi_{\ell_2}+\dots
    +\phi_{\ell_{2\nu+1}}$ satisfies the eikonal 
equation \eqref{eq:NLSeikonal}, and thus needs to be taken into
account in our approximation, if there exists $\k\in \R^d$ such that: 
\begin{equation} \label{eq:NLSreson}
  \sum_{k=1}^{2\nu+1}(-1)^{k+1}\k_{\ell_k}=\k\text{ and
  }\sum_{k=1}^{2\nu+1}(-1)^{k+1}|\k_{\ell_k}|^2=|\k|^2.  
\end{equation}
With $\k_j=\k$, a phase $\phi_j$ of the form \eqref{eq:NLSphi_j} is
then said to be generated through a \emph{resonant interaction}
between the  phases $(\phi_{\ell_k})_{1\le k\le 2\nu+1}$. 

This yields the following algorithm to construct 
the set $\Phi$ from $\Phi_0$: Starting from the initial (at most 
countable) set $\Phi_0=\{\k_j \ ,\ j \in J_0\}$, we obtain a first 
generation $\Phi_1=\{\k_j \ ,\ j \in J_1\} \supset \Phi_0$ 
(with $J_1 \supset J_0$) by adding to $\Phi_0$ 
all points $\k\in\R^d$ satisfying \eqref{eq:NLSreson} for some 
$\{ \phi_{\ell_1},\dots,\phi_{\ell_{2\nu+1}} \} \subset \Phi_0$. 
By a recursive scheme, we are led to a set 
$\Phi\ = \{ \k_j \ ,\ j \in J \}$ which is (at most countable and) stable 
under the resonance condition \eqref{eq:NLSreson}. 

\begin{remark}
It is worth noting that $\Phi$ is a subset of the group generated 
by $\Phi_0$ (in $(\R^d,+)$). In particular, if $\Phi_0 \subset \Z^d$, 
then $\Phi \subset \Z^d$. 
\end{remark}

It turns out that we do not need a precise description of the 
resonances to prove norm inflation stated in
Theorem~\ref{theo:genNLSinflation} or 
Theorem~\ref{theo:inflation2}. However, 
in the case of a cubic nonlinearity $\nu =1$, all possible resonances
can be easily described geometrically by the following lemma
(\cite{Iturbulent,CDS10}).  
To this end, we denote for $j\in J$, the set of all  resonances by
\begin{equation*}
  I_j=\Big \{ \(\ell_1,\ldots,\ell_{2\nu+1}\)\in J^{2\nu+1} \mid
    \sum_{k=1}^{2\nu+1}(-1)^{k+1} \k_{\ell_k} = \k_j,\  \sum_{k=1}^{2\nu+1}(-1)^{k+1}
    |\k_{\ell_k}|^2 = |\k_j|^2\Big\}. 
\end{equation*}

\begin{lemma} \label{lem:rectangles}
Let $\nu=1$, $d\ge2$, and $j,k,\ell,m$ belong to $J$. Then,
$(\k_k,\k_\ell,\k_m) \in I_j$  
precisely when the endpoints of the vectors $\kappa_k, \kappa_\ell, \kappa_m,
\kappa_j$ form four corners of a non-degenerate rectangle
with $\kappa_\ell$ and $\kappa_j$ opposing each other, or when this
quadruplet corresponds to one of the two following degenerate cases:
$(\k_k=\k_j, \k_m=\k_\ell)$,  
or $(\k_k=\k_\ell, \k_m=\k_j)$. 
\end{lemma}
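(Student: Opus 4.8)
The plan is to translate the two algebraic resonance conditions in the definition of $I_j$ (with $\nu=1$, so a quadruplet $(\k_k,\k_\ell,\k_m)$) into a statement about Euclidean geometry. Writing the first condition as $\k_k - \k_\ell + \k_m = \k_j$, i.e. $\k_k + \k_m = \k_j + \k_\ell$, we see that the midpoint of the segment $[\k_k,\k_m]$ coincides with the midpoint of the segment $[\k_j,\k_\ell]$; call this common midpoint $c$. Thus the four points are symmetric in pairs about $c$, which already forces them to be the vertices of a parallelogram (possibly degenerate) with $\k_\ell$ and $\k_j$ opposite, and $\k_k$ and $\k_m$ opposite. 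The first step is therefore just this midpoint observation.

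Next I would bring in the second condition, $|\k_k|^2 - |\k_\ell|^2 + |\k_m|^2 = |\k_j|^2$, and rewrite everything relative to the center $c$: set $u = \k_k - c$, $v = \k_\ell - c$, so that $\k_m - c = -u$ and $\k_j - c = -v$. Expanding $|\k_k|^2 = |c|^2 + 2\,c\cdot u + |u|^2$, and similarly for the others, the $|c|^2$ and $|u|^2,|v|^2$ terms cancel in the combination, and the second resonance condition collapses to the single scalar identity $c\cdot u = c\cdot v$, equivalently $c\cdot(u-v)=0$. Since $u - v = \k_k - \k_\ell$ is one side of the parallelogram emanating from $\k_\ell$, and the diagonals of the parallelogram are $2u$ (the segment from $\k_m$ to $\k_k$, through $c$) and $2v$ (from $\k_j$ to $\k_\ell$)… more to the point, $c\cdot(u-v)=0$ says that the vector from the center to the midpoint of a side is orthogonal to that side, which is exactly the condition that the parallelogram is a rectangle. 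So the second step is this expansion and the identification of $c\cdot(u-v)=0$ as the right-angle condition.

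It then remains to sort out the degenerate possibilities, which is where a little care is needed rather than any real difficulty. If $u=0$ then $\k_k=\k_m=c$ and, since $c\cdot(u-v)=0$ is automatic, there is no further constraint: this is the case $\k_k=\k_m$ (and then $\k_j + \k_\ell = 2\k_k$), a special instance of which — if additionally $v$ is unconstrained — need not match the listed degenerate cases unless we also invoke that the statement only lists those quadruplets relevant up to the symmetry of the problem; I would check that in fact $u=0$ together with the requirement that the four indices come from the phases present forces $\k_k = \k_j$ or $\k_k = \k_\ell$ only in the genuinely degenerate subcases, and otherwise the "rectangle" is flat. Cleaner is to say: when $u\neq 0$ and $v\neq 0$ and $u,v$ are linearly independent, $c\cdot(u-v)=0$ is equivalent to the non-degenerate rectangle condition; the remaining cases ($u=0$, or $v=0$, or $u\parallel v$) are precisely the two degenerate cases $(\k_k=\k_j,\k_m=\k_\ell)$ and $(\k_k=\k_\ell,\k_m=\k_j)$ listed in the statement, which one verifies by checking each collinear configuration against both resonance equations. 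Conversely, given a non-degenerate rectangle with $\k_\ell,\k_j$ opposite, its center is equidistant from all four vertices and the center-to-side-midpoint vector is perpendicular to the side, so both resonance identities hold; this gives the reverse implication.

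I expect the main obstacle to be purely bookkeeping: making sure the degenerate collinear configurations are enumerated correctly and that no spurious solution of the two scalar equations is missed, since the conditions are symmetric under swapping $(\k_k \leftrightarrow \k_m)$ but not under swapping in $\k_\ell$ or $\k_j$, so the roles of the four points must be tracked consistently throughout. The "rectangle" heart of the argument — midpoint coincidence plus $c\cdot(u-v)=0$ — is short; the care lies in the boundary cases and in matching them to the precise phrasing of the lemma.
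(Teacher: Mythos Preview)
The paper does not actually prove this lemma; it is stated with a citation to \cite{Iturbulent,CDS10}. So there is no in-paper proof to compare against, and I will simply assess your argument on its own.

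Your overall architecture---reduce the first condition to coincidence of midpoints, parametrize by $u=\k_k-c$ and $v=\k_\ell-c$, then read off the geometry from the second condition---is exactly the right one. However, the key expansion is miscomputed. With $\k_k=c+u$, $\k_\ell=c+v$, $\k_m=c-u$, $\k_j=c-v$, one has
\[
|\k_k|^2-|\k_\ell|^2+|\k_m|^2-|\k_j|^2
= 2\bigl(|u|^2-|v|^2\bigr),
\]
because the $c\cdot u$ and $c\cdot v$ cross terms cancel in pairs while the $|u|^2$ and $|v|^2$ terms do \emph{not}. The second resonance condition is therefore $|u|=|v|$, not $c\cdot(u-v)=0$. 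Geometrically this says the two diagonals of the parallelogram have equal length, which is precisely the rectangle condition; equivalently, all four vertices are equidistant from $c$.

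With the correct condition $|u|=|v|$, the degenerate analysis becomes clean and your hedging is unnecessary: if $u,v$ are linearly independent you get a non-degenerate rectangle; if they are dependent then $|u|=|v|$ forces $u=\pm v$, giving exactly the two listed cases $(\k_k=\k_\ell,\ \k_m=\k_j)$ and $(\k_k=\k_j,\ \k_m=\k_\ell)$. Your worry about $u=0$ yielding unconstrained $v$ disappears, since $u=0$ now forces $v=0$ and all four points coincide (a subcase of both degenerate cases). Fix the expansion and the proof is complete and short.
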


\begin{example}\label{ex:key}
The proof of norm inflation will be based upon the following case. Let 
\begin{equation}\label{eq:J0}
  \Phi_0=\left\{ \k_1=(1,0,\dots,0), \ \k_2=(1,1,0,\dots,0),\
  \k_3=(0,1,0,\dots,0) \right\} \subset \R^d.
\end{equation}
The above lemma shows that for the cubic nonlinearity cubic ($\nu=1$), 
the set of relevant phases is simply
\begin{equation*}
  \Phi=\Phi_0\cup \{\k_0=0_{\R^d}\}.   
\end{equation*}
One and only one phase is created by resonant interaction: the zero
phase. For higher order nonlinearities ($\nu>1$), we also 
have $0\in\Phi$ (since $0 = -\k_1+\k_2-\k_3+(\k_1-\k_1+\dots-\k_1)$).
\end{example}

 \subsubsection{The amplitudes system}
 \label{sec:NLSampli}
Continuing the formal WKB approach, the $\O(\eps^1)$ term yields, after
projection onto characteristic oscillations $e^{i\phi_j/\eps}$, a
system of transport equations:
\begin{equation}
  \label{eq:transportNLS}
 \d_t a_j+\k_j\cdot \nabla a_j = -i\mu
 \sum_{(\ell_1,\dots,\ell_{2\nu+1})\in
   I_j}a_{\ell_1}\overline a_{\ell_2}\dots a_{\ell_{2\nu+1}}\quad , 
 \quad a_{j\mid t=0}=\alpha_j, 
\end{equation}
with the convention $\alpha_j=0$ if $j\not\in J_0$. 
As claimed above, it is not necessary to fully understand the resonant
sets to prove Theorem~\ref{theo:genNLSinflation} or
Theorem~\ref{theo:inflation2}.  The following lemma will suffice:
\begin{lemma}\label{lem:a0NLS}
  Let $\nu\in \N^*$, $\mu\in\R^*$ and $d\ge 2$. Assume $\Phi_0$ 
  is given by \eqref{eq:J0}. There exist $\alpha_1,\alpha_2,\alpha_3
  \in \Sch(\R^d)$ such that if we set $\k_0=0_{\R^d}$, 
  \eqref{eq:transportNLS} implies 
  \begin{equation*}
    \d_t a_{0\mid t=0} \not =0. 
  \end{equation*}
For instance, this is so if $\alpha_1=\alpha_2=\alpha_3\not =0$. 
\end{lemma}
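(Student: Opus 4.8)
The plan is to evaluate the transport equation \eqref{eq:transportNLS} for $j=0$ at time $t=0$ and to exhibit amplitudes for which the right-hand side is a nonzero function of $x$. Since $\k_0=0_{\R^d}$ and $0\notin J_0$, the convective term $\k_0\cdot\nabla a_0$ vanishes and $a_{0\mid t=0}=0$ (by the convention $\alpha_j=0$ for $j\notin J_0$), so \eqref{eq:transportNLS} reduces at $t=0$ to
\begin{equation*}
  \d_t a_{0\mid t=0}=-i\mu\sum_{(\ell_1,\dots,\ell_{2\nu+1})\in I_0}
  \alpha_{\ell_1}\overline{\alpha_{\ell_2}}\cdots\alpha_{\ell_{2\nu+1}};
\end{equation*}
again using $\alpha_j=0$ for $j\notin J_0=\{1,2,3\}$, only the finitely many tuples of $I_0$ whose entries all lie in $\{1,2,3\}$ contribute.

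First I would describe those tuples. The relevant geometric fact is that $\k_1\cdot\k_3=0$ and $\k_1+\k_3=\k_2$, so that $0,\k_1,\k_2,\k_3$ are the corners of a unit square in the $(x_1,x_2)$-plane with $\k_2$ opposite to $\k_0=0$. For $\nu=1$, Lemma~\ref{lem:rectangles} applied with $j=0$ then shows that the only elements of $I_0$ supported in $\{1,2,3\}$ are $(1,2,3)$ and $(3,2,1)$, the degenerate cases of that lemma being excluded since they would force some $\k_{\ell_k}=\k_0=0$. For general $\nu\ge1$ I do not need a complete description: padding $(1,2,3)$ with $\nu-1$ consecutive pairs $(1,1)$ yields a tuple
\begin{equation*}
  (\ell_1,\dots,\ell_{2\nu+1})=\bigl(1,2,3,\underbrace{1,1,\dots,1,1}_{2\nu-2}\bigr)\in I_0,
\end{equation*}
because $\k_1-\k_2+\k_3=0=\k_0$ and $1-2+1=0=|\k_0|^2$, while each added pair contributes $-\k_1+\k_1=0$ and $-|\k_1|^2+|\k_1|^2=0$ to the two resonance sums. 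Hence $N:=\#\bigl(I_0\cap\{1,2,3\}^{2\nu+1}\bigr)\ge1$.

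Then I would take $\alpha_1=\alpha_2=\alpha_3=\alpha$ for an arbitrary $\alpha\in\Sch(\R^d)$ with $\alpha\not\equiv0$. Among the indices $1,\dots,2\nu+1$ there are $\nu+1$ odd positions (carrying no conjugation) and $\nu$ even positions (carrying one), so each surviving summand equals the same function $\alpha^{\nu+1}\overline{\alpha}^{\,\nu}=|\alpha|^{2\nu}\alpha$; no cancellation can occur and
\begin{equation*}
  \d_t a_{0\mid t=0}=-i\mu\,N\,|\alpha|^{2\nu}\alpha\not\equiv0,
\end{equation*}
since $\mu\neq0$, $N\ge1$ and $\alpha\not\equiv0$. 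The only point that really requires care --- the mild obstacle here --- is the bookkeeping: checking that at $t=0$ only the $\{1,2,3\}$-supported resonances survive, and excluding cancellation in the sum over $I_0$. The choice $\alpha_1=\alpha_2=\alpha_3$ is precisely what trivializes the second issue, as it makes all surviving terms identical (in particular equi-signed).
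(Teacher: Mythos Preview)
Your proof is correct and follows essentially the same approach as the paper: set $\alpha_1=\alpha_2=\alpha_3=\alpha$, observe that every surviving summand in $\d_t a_{0\mid t=0}$ equals $|\alpha|^{2\nu}\alpha$, and exhibit the tuple $(1,2,3,1,1,\dots,1)\in I_0$ to ensure the count (your $N$, the paper's $C(\nu,d)$) is at least $1$. Your write-up is slightly more detailed in justifying why only the $\{1,2,3\}$-supported tuples survive and why the padded tuple is resonant, but the argument is the same.
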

\begin{proof}
  Assume
  $\alpha_1=\alpha_2=\alpha_3=\alpha$. Equation~\eqref{eq:transportNLS}
  yields
  \begin{align*}
   \d_t a_{0\mid t=0} 
   & = -i\mu  \sum_{(\ell_1,\dots,\ell_{2\nu+1})\in I_0}
   \alpha_{\ell_1}\overline \alpha_{\ell_2}
   \dots\alpha_{\ell_{2\nu+1}}
= -i\mu C(\nu,d)|\alpha|^{2\nu}\alpha.
  \end{align*}
Then, $C(\nu,d)\ge 1$, since $\(1,2,3,1,1,\dots,1\)\in I_0$. 
\end{proof}
This lemma shows that even though the zero mode is absent at time
$t=0$, it appears instantaneously for a suitable choice of the initial
amplitudes $\alpha_1,\alpha_2,\alpha_3$. This is one
of the keys in the proof of the results presented in
Section~\ref{sec:inflation}.  
\begin{remark}
  This result fails to be true in the one-dimensional cubic case
  $d=\nu=1$, and in a situation where one starts with 
  only two (non-trivial) modes ($d,\nu\ge 1$, but $\sharp \{j\in J_0\
  \mid \alpha_j\not =0\}\le 
  2$). In both cases no new resonant mode can be created through the
  nonlinear interaction (see \cite{CDS10}).  
\end{remark}

\subsection{Geometric optics for the DS system} \label{sec:DSWNLGO} In
order to apply our approach to the DS system \eqref{eq:DS}, we first
have to  
understand the resonance structure for $\eta = - 1$ (a case where
$\Delta_\eta$ is an hyperbolic operator). We shall, as a first step, neglect the
action of the non-local term $E$  
and instead consider
\begin{equation}\label{eq:etaNLS}  
  i\eps \d_t u^\eps + \frac{\eps^2}{2}\(\eta  \d^2_{x_1} + \d^2_{x_2}
  \) u^\eps= \eps \mu |u^\eps|^{2} u^\eps\ ,  
  \quad x\in \R^2,
\end{equation}
subject to oscillatory initial data of the form
\eqref{eq:DSWNLGOini}. 

\subsubsection{Characteristic phases and resonances} 
\label{sec:phasesDS} 
We follow the same steps as in Section~\ref{sec:NLSphases} and
determine the characteristic phases via 
\begin{equation}
  \label{eq:hj}
   \d_t \phi+\frac{1}{2} \Big( \eta (\partial_{x_1} \phi)^2 +
   (\partial_{x_2} \phi)^2 \Big)=   0 \quad , 
   \quad \phi(0,x) =  \k\cdot x. 
\end{equation}
Denoting $\k= (p,q)$, the solution of this equation is given by
\begin{equation}\label{eq:planewave}
\phi_j(t,x) = px_1  + q x_2 - \frac{t}{2}\( q^2 + \eta p^2 \).
\end{equation}
In the case $\eta = -1$ we see that if initially $\k=(\pm p,\pm p) $, then 
$\phi(t,x) = \k \cdot x$ is independent of time.  

In order to understand possible resonances due to the cubic
nonlinearity, we simply notice  
that, if some phases $\phi_k$, $\phi_\ell$ and $\phi_m$ are given by 
\eqref{eq:planewave} (with $\k$ equal to $\k_k$, $\k_\ell$ and 
$\k_m$, respectively), then the combination 
$\phi = \phi_k - \phi_\ell +\phi_m$ again solves \eqref{eq:hj} if, and
only if,
it is of the form given by \eqref{eq:planewave}, with 
$\k=(p,q)\in\R^2$ satisfying 
\begin{equation}\label{eq:resonance}
\k = \k_k-\k_\ell +\k_m \quad ,\quad  
q^2 + \eta p^2= q_k^2 - q_\ell^2 +
q_m^2 + \eta (p_k^2 - p_\ell^2 + p_m^2). 
\end{equation}
Thus, the same iterative procedure as in Section~\ref{sec:NLSphases} 
allows us to build from a given (at most countable) set of wave-vectors 
$\Phi_0=\{\k_j \ ,\ j \in J_0\}$ in $ \R^2$, a new set 
$\Phi=\{\k_j \ ,\ j \in J\}$, closed under the resonance 
condition \eqref{eq:resonance}. Again, it is worth noting that $\Phi$ 
is a subset of the group generated by $\Phi_0$ (in $(\R^2,+)$). 
In particular, if $\Phi_0 \subset \Z^2$, then $\Phi \subset \Z^2$. 
We consequently alter the definition of the resonance set $I_j$ given
above and denote,  
for all $j \in J$:
\begin{equation*}
 I _j =\{  (k,\ell,m) \in J^3\ \mid 
 \k_j = \k_k-\k_\ell +\k_m , \,  
q_j^2 + \eta p_j^2= q_k^2 - q_\ell^2 +
q_m^2 + \eta (p_k^2 - p_\ell^2 + p_m^2)  \}. 
\end{equation*}

The case $\eta=1$ has already been discussed in Section~\ref{sec:NLSphases}. 
To understand better the non-elliptic case $\eta = -1$ we first note
that \eqref{eq:resonance} is equivalently fulfilled by $(\k_k - \k,
\k_\ell - \k, \k_m - \k)$, as can easily be checked by a
direct computation. 
Thus it is enough to understand the case where the zero mode 
$\k =(0,0)$ is created. In this case the resonance condition
\eqref{eq:resonance} is equivalent to  
\begin{equation*}
 \k_\ell = \k_k + \k_m , \ \text{with $\k_k, \k_m$ satisfying: } \ 
q_kq_m = p_kp_m. 
\end{equation*}
This leads to the following statement:

\begin{lemma}
Let $\eta=-1$, and $j,k,\ell,m$ belong to $J$. 
Then, $(\k_k,\k_\ell,\k_m) \in I_j$  
precisely when $(\k_k=\k_j, \k_m=\k_\ell)$,  
or $(\k_k=\k_\ell, \k_m=\k_j)$, or when the endpoints of the 
vectors $\k_j$, $\k_k$, $\k_\ell$, $\k_m$ form four corners of a 
non-degenerate parallelogram, with 
$\kappa_\ell$ and $\kappa_j$ opposing each other, and such that  
$(\kappa_k-\kappa_\ell)/|\kappa_k-\kappa_\ell|$ and 
$(\kappa_m-\kappa_\ell)/|\kappa_m-\kappa_\ell|$ are symmetric 
with respect to the first bisector.
\end{lemma}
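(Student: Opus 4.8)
The plan is to characterize the resonance set $I_j$ in the hyperbolic case $\eta = -1$ by reducing, via the translation invariance already noted in the text, to the problem of determining when the zero mode $\k = (0,0)$ is created. As observed just before the statement, $(\k_k, \k_\ell, \k_m) \in I_j$ if and only if $(\k_k - \k_j, \k_\ell - \k_j, \k_m - \k_j)$ satisfies the resonance condition for the zero mode, so it suffices to analyze the equations $\k_\ell = \k_k + \k_m$ together with $q_k q_m = p_k p_m$ (writing $\k_k = (p_k, q_k)$, $\k_m = (p_m, q_m)$), and then translate the resulting geometric picture back by $\k_j$.

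First I would dispose of the degenerate solutions. If $\k_k = 0$ then the first equation forces $\k_\ell = \k_m$, and the second equation is automatically satisfied; after undoing the translation this is the case $\k_k = \k_j$, $\k_m = \k_\ell$. Symmetrically $\k_m = 0$ gives $\k_k = \k_\ell$ and, after translation, the case $\k_k = \k_\ell$, $\k_m = \k_j$. So from now on I assume $\k_k, \k_m \neq 0$ (in the translated picture), i.e. $\k_k \neq \k_j$ and $\k_m \neq \k_j$.

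Next I would interpret the two conditions geometrically. The relation $\k_\ell = \k_k + \k_m$ with all four of $0, \k_k, \k_\ell, \k_m$ distinct says exactly that $0, \k_k, \k_\ell, \k_m$ are the four vertices of a (possibly degenerate) parallelogram with $0$ and $\k_\ell$ opposite; translating back, $\k_j, \k_k, \k_\ell, \k_m$ form a parallelogram with $\k_j$ and $\k_\ell$ opposite. The content is then in the second condition $q_k q_m = p_k p_m$. Here I would rewrite $p_k p_m - q_k q_m$ in a way that exposes the first bisector: since the line $x_2 = x_1$ has direction $(1,1)$ and normal $(1,-1)$, reflection across it swaps coordinates, and the bilinear form $B((p,q),(p',q')) = p p' - q q'$ is, up to a factor, $\langle (p,q), R(p',q')\rangle$ where $R$ is that reflection — equivalently $B$ is the indefinite form associated to $\Delta_{-1}$. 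Vanishing of $B(\k_k, \k_m)$ thus says $\k_k$ is orthogonal to the reflection of $\k_m$, which after a short computation is equivalent to the statement that $\k_k$ and $\k_m$ (as directions based at $0$) are symmetric with respect to the first bisector; translated back to base point $\k_\ell$, the directions $(\k_k - \k_\ell)/|\k_k - \k_\ell|$ and $(\k_m - \k_\ell)/|\k_k - \k_\ell|$ are symmetric with respect to the first bisector. (One should also note the edge cases where the parallelogram degenerates to a segment — e.g. $\k_k$ parallel to $\k_m$ — and check that these are precisely absorbed into the two degenerate cases already listed, using $\k_k, \k_m \neq 0$; this is where I would be most careful.)

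The main obstacle I anticipate is purely bookkeeping rather than conceptual: making the dictionary "$q_k q_m = p_k p_m$" $\Longleftrightarrow$ "symmetry across the first bisector" precise and reversible, and simultaneously verifying that the non-degeneracy hypothesis on the parallelogram matches exactly the exclusion of the two degenerate cases — i.e. that no genuinely new solution hides in a flattened parallelogram. Concretely I would parametrize $\k_k = \k_\ell + r(\cos\theta, \sin\theta)$ and $\k_m = \k_\ell + r'(\cos\theta', \sin\theta')$, expand the condition, and show it reduces to $\cos(\theta + \theta') = \cos(\pi/2)$-type relations pinning $\theta, \theta'$ symmetric about $\pi/4$; then a finite check of the degenerate configurations closes the argument. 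Everything else follows from the translation reduction and the explicit form \eqref{eq:planewave} of the phases.
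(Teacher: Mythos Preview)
Your approach matches the paper's exactly: the text immediately preceding the lemma performs the same translation to $\k_j=0$, reduces to the zero-mode creation condition $q_k q_m = p_k p_m$, and then simply states the lemma without further proof --- the geometric reading is left to the reader, so you are supplying what the paper omits rather than diverging from it.

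Two small points. First, your identity $B((p,q),(p',q'))=pp'-qq'=\langle(p,q),R(p',q')\rangle$ with $R$ the reflection across $y=x$ is wrong (that pairing gives $pq'+qp'$); drop it and use directly the polar parametrization you propose, which yields $\cos(\theta+\theta')=0$, i.e.\ $\theta+\theta'\equiv\pi/2\pmod\pi$, and from there the bisector symmetry. Second, your caution about flattened parallelograms is well placed: if $\k_k$ and $\k_m$ are nonzero, parallel, and directed along a bisector, the resonance condition holds while the four points are collinear --- a borderline configuration that the trichotomy in the statement does not literally cover, so this is an imprecision in the statement rather than a gap in your argument.
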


The resonance condition is different from the elliptic case $\eta =
+1$ (Lemma~\ref{lem:rectangles},
based on the completion of rectangles). In particular resonances for
$\eta =+1$ are not necessarily also resonances for $\eta = -1$ and
vice versa, as illustrated by the examples below.  
The common feature of the two cases is that it takes at
least three different phases to create  a new one 
via the cubic nonlinearity. 

\begin{example} 
  Let $\k_k=(2,1)$, $\k_m=(1,2)$ and
  $\k_\ell=(3,3)$. 
  Then the origin is obtained by cubic
  resonance in the hyperbolic case $\eta=-1$, while no new phase
  results of the interaction of these three phases in the elliptic
  case.  
\end{example}

\begin{example} On the other hand, if $\k_k=(0,0)$, $\k_\ell=(1,1)$ and
  $\k_m=(0,2)$, we obtain $\k_k-\k_\ell+\k_m = (-1,1)$: in the elliptic case
  $\eta=+1$, this is resonance, 
while it is not in the case $\eta = -1$.
\end{example}

\begin{example}
  With the approach we have in mind to prove
Theorem~\ref{theo:genNLSinflation}, let $\Phi_0$ be given by
\eqref{eq:J0}. 
Like in the elliptic case, $\Phi=\Phi_0\cup\{0_{\R^2}\}$ is obtained
by cubic resonance when $\eta=-1$.

\end{example}\subsubsection{Oscillatory structure of the non-local term} 
\label{sec:nonloc}

In order to proceed further, we need to take into account the Fourier
multiplier $E$ defined in \eqref{eq:E}. The corresponding nonlinearity is given by 
\begin{equation*}
  F\(u^\eps_1,u^\eps_2,u^\eps_3\) = E\(u^\eps_1\overline {u}^\eps_2\)u^\eps_3.
\end{equation*} 
Having in mind \eqref{eq:approx} the point is to understand the rapid
oscillations of  
\begin{equation*}
  F\(a_k e^{i\phi_k /\eps}, a_\ell e^{i\phi_\ell /\eps},a_m e^{i\phi_m/\eps}\),
\end{equation*}
with $\phi_k$, $\phi_\ell$ and $\phi_m$ satisfying 
\eqref{eq:planewave}. The phase
$\phi_m$ obviously factors out and so do the oscillations in time,  
since they are not affected by the action of $E$. In view of the
discussion on resonances, we must understand the high frequency
oscillations of 
\begin{equation*}
  E\(a_k \overline a_\ell e^{i\kappa\cdot x/\eps}\),\text{ where }\kappa =
  \kappa_k-\kappa_\ell. 
\end{equation*}
If $\kappa=0$, there is no rapid oscillation, and we can directly
resume the argument of the case $E={\rm Id}$. If $\kappa\not =0$, we
denote $b =a_k \overline a_\ell$ and  write
\begin{align*}
  e^{-i\kappa\cdot x/\eps}E\(a_k \overline a_\ell e^{i\kappa\cdot
    x/\eps}\)&= \frac{1}{(2\pi)^2}\int_{\R^2}\int_{\R^2} e^{i
    (x-y)\cdot \xi} \widehat K(\xi) e^{i\kappa\cdot
      (y-x)/\eps}b(y)dyd\xi\\
& = \frac{1}{(2\pi\eps )^2}\int_{\R^2}\int_{\R^2} e^{i
    (x-y)\cdot (\zeta-\kappa)/\eps} \widehat K(\zeta)
  b(y)dyd\zeta,   
\end{align*}
where we have used the fact that the function $\widehat K$ is
$0$-homogeneous. Denote by
$I^\eps(x)$ the above integral.
Applying formally the stationary phase formula yields:
\begin{equation*}
  I^\eps(x) \Eq \eps 0 \widehat K(\k)b(x) = \widehat K(\k_k-\k_\ell)a_k(x)
  \overline a_\ell(x). 
\end{equation*}
This formal argument suggests that the non-local operator indeed acts
like a cubic nonlinearity when $\eps \to 0$, and reveals a formula for
the corresponding amplitude system.  
A rigorous proof for this argument will be given later in
Section~\ref{sec:WNLGO}. For the moment, we shall proceed formally by
plugging the ansatz \eqref{eq:approx} into  
\begin{equation}\label{eq:DSsemi}  
  i\eps \d_t u^\eps + \frac{\eps^2}{2}\(\eta  \d^2_{x_1} + \d^2_{x_2}
  \) u^\eps= \eps \lambda E(|u^\eps|^2) u^\eps\ + 
  \eps \mu |u^\eps|^2 u^\eps\ ,  
  \quad x\in \R^2.
\end{equation}
The terms of order $\O(\eps^0)$ are identically zero since all the
$\phi_j$'s are characteristic.  
For the $\O(\eps^1)$ term, we project onto characteristic
oscillations, to obtain the following system of (non-local) transport
equations 
\begin{equation}\label{eq:transportsystem}
  \begin{aligned}
    \d_t a_j +\( \eta p_j \partial_{x_1} + q_j \partial_{x_2} \) a_j 
  & =  -i\lambda \sum_{\ell \in J} E(|a_\ell|^2) a_j \\
  & \quad -i \lambda \sum_{{(k,\ell,m)\in I_j}\atop{ \ell\not =k}}
    \widehat K(\k_k-\k_\ell) a_k \overline   a_\ell a_m \\
  & \quad -i \mu \sum_{{(k,\ell,m)\in I_j}} a_k \overline a_\ell a_m,  
  \end{aligned}
  \end{equation}
subject to initial data $ a_j(0,x) =    \alpha_j ( x)$. 
\begin{remark}\label{rem:SP}
  The above computation suggests that if $\widehat K$ is
  $M$-homogeneous with $M<0$, then the second line in
  \eqref{eq:transportsystem} vanishes in the limit $\eps>0$, since
  $\eps^{-M}$ can be factored out. This
  argument is made rigorous in \S\ref{sec:localizing}. If in addition
  there is no local nonlinearity, i.e. $\mu=0$, \eqref{eq:transportsystem}
  takes the form
  \begin{equation*}
    \d_t a_j +\k_j\cdot \nabla_\eta a_j 
  =  -i\lambda \sum_{\ell \in J} E(|a_\ell|^2) a_j. 
  \end{equation*}
Then the modulus of $a_j$ is constant along the characteristic curves,
along which the above equation is of the form $D_t a_j = 
a_j\times i\R$, so $D_t|a_j|^2=0$. In particular, no mode is
created in this case, a situation to be compared with the framework of
\cite{GMS-p}, where the assumptions made on $K$ are of a different
kind. An important example where this remark applies is the
Schr\"odinger--Poisson system ($d\ge 3$), for which $M=-2$. We
therefore strongly believe that also in this case  
one can prove a result analogous to Theorem~\ref{theo:WNLGO} (with an
error rate $\O(\eps)$, since the second line in 
\eqref{eq:transportsystem} becomes 
$\O(\eps^{-M})=\O(\eps^2)$, and we will see in Section~\ref{sec:nonstat} that
non-resonant phases generate an error of order $\O(\eps)$). However,
we expect that the functional setting has to be slightly modified,
since  
the Wiener algebra framework (used to prove Theorem~\ref{theo:WNLGO})
may no longer be convenient due to $\widehat K\not \in L^\infty$.  
Nevertheless, a setting based on $L^2$ spaces should be sufficient 
(to derive approximation in $L^2$, but not in $L^\infty$), 
since the Poisson nonlinearity is $L^2$-subcritical. 
\end{remark}
Similarly to
Lemma~\ref{lem:a0NLS}, with
$\Phi_0$ as in \eqref{eq:J0}, and we can find corresponding initial data
$\alpha_1,\alpha_2,\alpha_3$ in $\Sch(\R^2)$,  
such that the zero mode appears instantaneously, \emph{provided} that
$\lambda+2\mu\not =0$.
\begin{lemma}\label{lem:a0DS}
  Let $\eta=\pm1$. Assume $\Phi_0$ is given by \eqref{eq:J0}, 
  and set $\k_0=0_{\R^2}$. The following are equivalent: 
  \begin{itemize}
  \item[(i)] $\lambda + 2 \mu \neq 0$. 
  \item[(ii)] There exist $\alpha_1,\alpha_2,\alpha_3\in
  \Sch(\R^2)$ such that \eqref{eq:transportsystem} implies 
  $\d_t a_{0\mid t=0} \not =0$. 
  \end{itemize}
When $\lambda + 2 \mu \neq 0$, $\alpha_1,\alpha_2,\alpha_3$ are 
admissible if and only if there is $x\in\R^2$ such that
$\alpha_1(x)\alpha_2(x)\alpha_3(x) \not =0$.  
\end{lemma}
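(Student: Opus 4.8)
The plan is to compute the coefficient of the zero mode in the right-hand side of \eqref{eq:transportsystem} at $t=0$, evaluated at $j=0$, and to determine exactly when this expression can be made nonzero by a choice of $\alpha_1,\alpha_2,\alpha_3$. First I would identify, exactly as in the proof of Lemma~\ref{lem:a0NLS}, the resonance set $I_0$ for the cubic nonlinearity ($\nu=1$) when $\Phi_0$ is given by \eqref{eq:J0}: by the relevant resonance lemma (Lemma~\ref{lem:rectangles} if $\eta=+1$, its hyperbolic analogue if $\eta=-1$, and Example following it) the triples $(k,\ell,m)\in J^3$ with $\k_k-\k_\ell+\k_m=\k_0=0$ are the "rectangle/parallelogram completion" triple(s) $(1,2,3)$ and $(3,2,1)$ together with the degenerate triples $(\ell,\ell,0)$ and $(0,\ell,\ell)$ for $\ell\in J$. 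The degenerate triples contribute, for each fixed $\ell$, a term proportional to $|a_\ell|^2 a_0$, which vanishes at $t=0$ since $\alpha_0=0$ (the zero mode is absent initially); likewise the first line $-i\lambda\sum_\ell E(|a_\ell|^2)a_0$ vanishes at $t=0$. So only the two nondegenerate triples $(1,2,3)$ and $(3,2,1)$ survive.

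Next I would plug $a_j|_{t=0}=\alpha_j$ into the second and third lines of \eqref{eq:transportsystem} for $j=0$. The third (local) line gives $-i\mu(\alpha_1\overline\alpha_2\alpha_3+\alpha_3\overline\alpha_2\alpha_1)=-2i\mu\,\alpha_1\overline\alpha_2\alpha_3$. The second (non-local) line gives $-i\lambda\bigl(\widehat K(\k_1-\k_2)+\widehat K(\k_3-\k_2)\bigr)\alpha_1\overline\alpha_2\alpha_3$; here I must take the condition $\ell\neq k$ in the sum into account, which excludes the degenerate triples and leaves precisely these two. With $\Phi_0$ as in \eqref{eq:J0} one has $\k_1-\k_2=-\k_3=(0,-1,0,\dots)$ and $\k_3-\k_2=-\k_1=(-1,0,\dots)$, so that by $0$-homogeneity of $\widehat K$ and the (DS) formula \eqref{eq:E} one computes $\widehat K(\k_1-\k_2)=0$ and $\widehat K(\k_3-\k_2)=1$; hence the non-local contribution is $-i\lambda\,\alpha_1\overline\alpha_2\alpha_3$. (More invariantly one should note that for the DS kernel $\widehat K(\k_1-\k_2)+\widehat K(\k_3-\k_2)=1$ for this particular configuration, which is where the precise geometry of \eqref{eq:J0} enters.) Collecting terms, $\d_t a_{0\mid t=0}=-i(\lambda+2\mu)\,\alpha_1\overline\alpha_2\alpha_3$.

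From this identity the equivalence is immediate: if $\lambda+2\mu=0$ the right-hand side vanishes for every choice of amplitudes, so (ii) fails; conversely, if $\lambda+2\mu\neq 0$, then $\d_t a_{0\mid t=0}\not\equiv 0$ precisely when $\alpha_1\overline\alpha_2\alpha_3$ is not identically zero, i.e. when there exists $x\in\R^2$ with $\alpha_1(x)\alpha_2(x)\alpha_3(x)\neq 0$ — which also settles the admissibility characterization in the last sentence of the lemma. For $\nu>1$ or for a general kernel one would have to be slightly more careful about which higher-order degenerate triples enter and re-check that they all carry a factor vanishing at $t=0$, but for the cubic DS case at hand the computation closes directly.

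The main obstacle I expect is purely bookkeeping: correctly enumerating $I_0$ for the configuration \eqref{eq:J0} in both signs $\eta=\pm1$ (using the two resonance lemmas and the worked examples), and keeping straight which sums in \eqref{eq:transportsystem} include or exclude degenerate triples — in particular that the constraint $\ell\neq k$ in the non-local line removes exactly the terms that would otherwise spoil the clean factor $\lambda+2\mu$. The actual evaluation of $\widehat K$ on the two difference vectors is trivial once the kernel \eqref{eq:E} is written out, and the $\eta$-dependence disappears because the zero mode and the vectors in \eqref{eq:J0} make the time-components of the phases irrelevant at $t=0$.
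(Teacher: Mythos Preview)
Your argument is correct and follows essentially the same route as the paper: identify that only the triples $(1,2,3)$ and $(3,2,1)$ in $I_0$ contribute at $t=0$ (all others involve a factor $\alpha_0=0$), evaluate the local and non-local contributions, and obtain $\d_t a_{0\mid t=0}=-i(\lambda+2\mu)\alpha_1\overline\alpha_2\alpha_3$, from which the equivalence and the admissibility criterion are immediate. The only cosmetic difference is that the paper writes $\widehat K(\k_1-\k_2)+\widehat K(\k_3-\k_2)=\widehat K(\k_3)+\widehat K(\k_1)=1$ (using evenness of the DS symbol), whereas you compute the two values $0$ and $1$ directly; the content is identical.
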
 

\begin{proof}
For this choice of $\Phi_0$, setting $\k_0=0$, we have $I_0\neq
\emptyset$. Since the only $(k,\ell,m)\in I_0$ corresponding 
to possibly non-zero products 
$\alpha_k\overline\alpha_\ell\alpha_m \not =0$ are $(1,2,3)$ and 
$(3,2,1)$, we have from \eqref{eq:transportsystem}:
 \begin{align*}
    \d_t {a_0}_{|_{t=0}}&=  - i \left( \lambda 
 \left( \widehat K(\k_1-\k_2) + \widehat K(\k_3-\k_2) \right)
 + 2 \mu \right) \alpha_1\overline\alpha_2\alpha_3\\
&=  - i \left( \lambda 
 \left( \widehat K(\k_3) + \widehat K(\k_1) \right)
 + 2 \mu \right) \alpha_1\overline\alpha_2\alpha_3.
  \end{align*}
Now, since $p_1p_3+\eta q_1q_3=0$, we one easily checks the identity
$\widehat K(\k_3) 
+ \widehat K(\k_1) =1$ and thus
\begin{align*}
\d_t {a_0}_{|_{t=0}} = - i \, (\lambda+2\mu) \, \alpha_1\overline\alpha_2\alpha_3.
\end{align*}
The lemma follows.
\end{proof}
\begin{remark}
  In the case $\l+2\mu=0$ (integrable case), one can prove by
  induction that $\d_t^m  a_{0\mid t=0}=0$ for all $m\in \N$. Thus, the zero
  mode does not appear, at least if we consider a smooth (analytic)
  setting. Note, that this aspect is not due to our initial choice $\Phi_0$. If
  $\l+2\mu=0$, the zero mode cannot appear, regardless of the precise
  form of $\Phi_0$. Indeed, grouping the sets of three phases creating the
  zero mode (with of course $\alpha_0=0$), we may assume that we
  consider only three initial phases: 
  the point is to notice that if
  $\k_1-\k_2+\k_3=0$, we still have
  \begin{equation*}
    \d_t {a_0}_{|_{t=0}}=  - i \left( \lambda 
 \left( \widehat K(\k_3) + \widehat K(\k_1) \right)
 + 2 \mu \right) \alpha_1\overline\alpha_2\alpha_3,
  \end{equation*}
and we conclude as in the proof of Lemma~\ref{lem:a0DS}.
\end{remark}

\subsection{Possible generalizations}
\label{sec:extension}

As far as geometric optics is concerned (i.e. determining the
resonances and deriving the corresponding amplitude system),  
the above analysis can be reproduced without modification (except
notations) in the case of: 
\begin{equation}\label{eq:nlsgen}
  i\eps\d_t u^\eps +\frac{\eps^2}{2}\Delta_\eta u^\eps =  \eps \l 
  \(K\ast|u^\eps|^{2\nu}\)u^\eps + \mu \eps  |u^\eps|^{2\nu} u^\eps ,
  \quad x\in \R^d, \, \nu \in \N,
\end{equation}
provided $\widehat K(\xi)$ is homogeneous of degree zero and continuous
away from the origin. In this case, the characteristic phases are  
given by 
\begin{equation*}
\phi_j (t,x) = \k_j \cdot x 
- \frac{t}{2} \sum_{\ell=1}^d \eta_\ell \k_{j, \ell}^2 ,
\end{equation*}
and the corresponding system of transport equations reads:
\begin{equation}\label{eq:transportsystemgen}
  \begin{aligned}
    \d_t a_j + \sum_{\ell=1}^d 
    \eta_{\ell}\k_{j,\ell}\cdot \partial_{x_\ell} a_j 
  & =  -i\lambda \sum_{(\ell_1,\dots,\ell_{2\nu}) \in J^{2\nu} , 
   \atop{ (\ell_1,\dots,\ell_{2\nu},j) \in I_j}}
   E(a_{\ell_1} \overline a_{\ell_2} \dots \overline a_{\ell_{2\nu}}) 
   a_j \\ 
  &  -i \lambda 
   \sum_{{(\ell_1,\dots,\ell_{2\nu+1})\in I_j} ,
   \atop{ \ell_{2\nu+1}\not =j}} 
   \widehat K(\k_j-\k_{\ell_{2\nu+1}}) 
    a_{\ell_1} \overline a_{\ell_2} \dots a_{\ell_{2\nu+1}} \\
  & -i \mu \sum_{(\ell_1,\dots,\ell_{2\nu+1})\in I_j} 
  a_{\ell_1} \overline a_{\ell_2} \dots a_{\ell_{2\nu+1}}  ,
  \end{aligned}
  \end{equation}
where we denote $E(f) = K\ast f$ and
\begin{equation*}
I _j = \{ (\ell_1,\dots,\ell_{2\nu+1}) \in J^{2\nu+1} \mid 
\phi_j = \phi_{\ell_1}-\phi_{\ell_2}+\dots+\phi_{\ell_{2\nu+1}} \}. 
\end{equation*}
The only point one has to check so as to derive instability results 
from the geometric optics result is that there exist initial data 
$\alpha_j$, such that $\d_t{a_0}\not =0$. When $\lambda=0$, the 
same proof as for Lemma~\ref{lem:a0NLS} yields
\begin{lemma}\label{lem:a0genNLS}
  Let $\nu\in \N^*$, $\lambda=0$, $\mu\in\R^*$ and $d\ge 2$. 
  Assume $\Phi_0$ is given by \eqref{eq:J0}. There exist 
  $\alpha_1,\alpha_2,\alpha_3 \in \Sch(\R^d)$ such that if we set 
  $\k_0=0_{\R^d}$, \eqref{eq:transportsystemgen} implies 
  \begin{equation*}
    \d_t a_{0\mid t=0} \not =0. 
  \end{equation*}
For instance, this is so if $\alpha_1=\alpha_2=\alpha_3\not =0$. 
\end{lemma}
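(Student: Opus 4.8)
The plan is to reproduce the proof of Lemma~\ref{lem:a0NLS} verbatim, since setting $\lambda=0$ in the transport system \eqref{eq:transportsystemgen} kills the two non-local contributions (the first two lines on the right-hand side), leaving exactly the local cubic-type interaction term that already appeared in \eqref{eq:transportNLS}. Concretely, with $\Phi_0=\{\k_1=(1,0,\dots,0),\k_2=(1,1,0,\dots,0),\k_3=(0,1,0,\dots,0)\}$ and $\k_0=0_{\R^d}$, Example~\ref{ex:key} guarantees that $0\in\Phi$ is generated by resonance, so $I_0\neq\emptyset$; and with $\alpha_j=0$ for $j\notin J_0$, evaluating \eqref{eq:transportsystemgen} at $t=0$ gives $\d_t a_{0\mid t=0}=-i\mu\sum_{(\ell_1,\dots,\ell_{2\nu+1})\in I_0}\alpha_{\ell_1}\overline\alpha_{\ell_2}\cdots\alpha_{\ell_{2\nu+1}}$.

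First I would make the choice $\alpha_1=\alpha_2=\alpha_3=\alpha$ for some fixed nonzero $\alpha\in\Sch(\R^d)$. Since every admissible multi-index in $I_0$ must have all its entries in $J_0=\{1,2,3\}$ (otherwise the corresponding amplitude product vanishes at $t=0$), the sum collapses to $-i\mu\,C(\nu,d)\,|\alpha|^{2\nu}\alpha$, where $C(\nu,d)$ is the (signed) count of tuples $(\ell_1,\dots,\ell_{2\nu+1})\in\{1,2,3\}^{2\nu+1}$ satisfying the resonance relations $\sum_k(-1)^{k+1}\k_{\ell_k}=0$ and $\sum_k(-1)^{k+1}|\k_{\ell_k}|^2=0$. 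The key point — exactly as in Lemma~\ref{lem:a0NLS} — is that this count is strictly positive: the tuple $(1,2,3,1,1,\dots,1)$ lies in $I_0$ because $-\k_1+\k_2-\k_3=0$ and the remaining $2\nu-2$ entries (all equal to $1$, appearing with alternating signs $+\k_1-\k_1+\cdots$) contribute nothing, and likewise $|\k_1|^2-|\k_2|^2+|\k_3|^2=1-2+1=0$ with the tail again cancelling in pairs. One should note all these tuples contribute with the same sign (they are genuine resonances, not cancelling ones), so no cancellation can occur and $C(\nu,d)\ge1$. Hence $\d_t a_{0\mid t=0}=-i\mu\,C(\nu,d)\,|\alpha|^{2\nu}\alpha\not\equiv0$ since $\mu\neq0$ and $\alpha\not\equiv0$.

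There is essentially no obstacle here: the only mildly delicate point, already handled in the proof of Lemma~\ref{lem:a0NLS}, is checking that the combinatorial constant $C(\nu,d)$ does not vanish, which follows from exhibiting one explicit resonant tuple and observing positivity of all contributions. I would simply write ``The same proof as for Lemma~\ref{lem:a0NLS} applies, since for $\lambda=0$ the first two sums on the right-hand side of \eqref{eq:transportsystemgen} are absent,'' and close with the explicit choice $\alpha_1=\alpha_2=\alpha_3\not\equiv0$.

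\begin{proof}
Since $\lambda=0$, the first two sums on the right-hand side of
\eqref{eq:transportsystemgen} vanish, and the system reduces to
\eqref{eq:transportNLS}. The conclusion then follows from the proof of
Lemma~\ref{lem:a0NLS}: with $\alpha_1=\alpha_2=\alpha_3=\alpha$ and
$\k_0=0_{\R^d}$, evaluating the transport equation for $a_0$ at $t=0$ gives
\begin{equation*}
  \d_t a_{0\mid t=0} = -i\mu \sum_{(\ell_1,\dots,\ell_{2\nu+1})\in I_0}
  \alpha_{\ell_1}\overline\alpha_{\ell_2}\cdots\alpha_{\ell_{2\nu+1}}
  = -i\mu\, C(\nu,d)\,|\alpha|^{2\nu}\alpha,
\end{equation*}
where $C(\nu,d)\ge 1$ because $(1,2,3,1,1,\dots,1)\in I_0$. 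Since
$\mu\neq0$, choosing $\alpha\in\Sch(\R^d)$ with $\alpha\not\equiv0$ yields
$\d_t a_{0\mid t=0}\not\equiv0$.
\end{proof}
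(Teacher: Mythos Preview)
Your proposal is correct and matches the paper's own treatment exactly: the paper simply states that ``the same proof as for Lemma~\ref{lem:a0NLS} yields'' this lemma, and your argument is precisely that. One minor remark: in your explanatory paragraph you check the elliptic resonance condition $|\k_1|^2-|\k_2|^2+|\k_3|^2=0$, whereas in the general $\Delta_\eta$ setting of \eqref{eq:transportsystemgen} the correct condition reads $\sum_m \eta_m(\k_{1,m}^2-\k_{2,m}^2+\k_{3,m}^2)=\eta_1(1-1)+\eta_2(-1+1)=0$, which holds for any choice of signs $\eta_m$; this does not affect your formal proof, which correctly defers to Lemma~\ref{lem:a0NLS}.
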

We can also take the non-local term into account, at least for the 
cubic nonlinearity and standard ``elliptic'' Schr\"odinger operator. 
In the case of dimension $d=2$, for all $c\in\R$, introduce 
\begin{equation*}
\mathcal{E}_c = 
\big\{ L \in C(\Sph^1,\C) \mid \,
\forall \xi\in \Sph^1, \, L\(\xi e^{i\pi/2}\) = c - L(\xi) \big\}.
\end{equation*}
There is a one-to-one correspondence, through $L \mapsto M$, 
where 
$$\forall \, \theta\in\R, \quad M(\theta)=L\(e^{i\theta}\)-c/2,$$ 
between $\mathcal{E}_c$ and the space of continuous functions 
on $\R$, having the ``anti'' ($\pi/2$)-periodicity symmetry 
$M(\theta+\pi/2)=-M(\theta)$. Lemma~\ref{lem:a0DS} is a particular 
case of: 
\begin{lemma} \label{lem:a0nonlocNLS} 
Consider $K\in\Sch'(\R^d)$ such that $\widehat K \in L^\infty(\R^d)$ 
is homogeneous of degree $0$, and  continuous away from the origin,  
but not constant on $\R^d\setminus\{0\}$. 
Let $\nu=1$, and $\lambda,\mu\in\R$, with $\lambda\neq0$. 
Set $\kappa_0 = 0_{\R^d}$.  
\begin{enumerate}
\item In the case $d\ge3$, there exist 
$\kappa_1,\kappa_2,\kappa_3 \in\R^d\setminus\{0\}$ 
and $\alpha_1,\alpha_2,\alpha_3\in\Sch(\R^d)$ 
such that, with $\Phi_0 = \{ \kappa_1,\kappa_2,\kappa_3 \}$, 
\eqref{eq:transportsystemgen} implies $\d_t a_{0\mid t=0} \not =0$.  
\item In the case $d=2$, the following are equivalent\emph{:} \\
\emph{(i)} $\widehat K \notin \mathcal{E}_{-2\mu/\lambda}$. \\
\emph{(ii)} There exist $\kappa_1,\kappa_2,\kappa_3 \in\R^2\setminus\{0\}$ 
and $\alpha_1,\alpha_2,\alpha_3\in\Sch(\R^2)$ 
such that, with $\Phi_0 = \{ \kappa_1,\kappa_2,\kappa_3 \}$, 
\eqref{eq:transportsystemgen} implies $\d_t a_{0\mid t=0} \not =0$.
\end{enumerate}  
Furthermore, in the cases where $\d_t a_{0\mid t=0} \not =0$ 
is possible, $\alpha_1,\alpha_2,\alpha_3$ are 
admissible if and only if there is $x\in\R^d$ such that
$\alpha_1(x)\alpha_2(x)\alpha_3(x) \not =0$.
\end{lemma}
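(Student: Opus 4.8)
The plan is to make the right-hand side of \eqref{eq:transportsystemgen} explicit at $t=0$ for $j=0$, and then to reduce the whole statement to a geometric question about the values of $\widehat K$ on orthogonal directions. Take $\nu=1$, the elliptic operator ($\eta_\ell\equiv1$), and $\kappa_0=0$, so that $\alpha_0=0$. In \eqref{eq:transportsystemgen} with $j=0$ the first sum carries the factor $a_0$ and therefore vanishes at $t=0$, while in the two remaining sums only triples $(\ell_1,\ell_2,\ell_3)$ with $\alpha_{\ell_1}\alpha_{\ell_2}\alpha_{\ell_3}\not\equiv0$, hence with $\ell_i\in\{1,2,3\}$, can contribute. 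Such a triple lies in $I_0$ iff $\kappa_{\ell_1}-\kappa_{\ell_2}+\kappa_{\ell_3}=0$ and $|\kappa_{\ell_1}|^2-|\kappa_{\ell_2}|^2+|\kappa_{\ell_3}|^2=0$; eliminating $\kappa_{\ell_2}$, this is equivalent to $\kappa_{\ell_2}=\kappa_{\ell_1}+\kappa_{\ell_3}$ together with $\kappa_{\ell_1}\perp\kappa_{\ell_3}$ (the non-degenerate-rectangle case of Lemma~\ref{lem:rectangles} with one corner at the origin). Since $\kappa_1,\kappa_2,\kappa_3\ne0$, this forces $\{\ell_1,\ell_2,\ell_3\}=\{1,2,3\}$, so either $I_0\cap\{1,2,3\}^3=\emptyset$ (and $\d_t a_{0\mid t=0}=0$ for all data), or, after relabeling, $\kappa_2=\kappa_1+\kappa_3$ with $\kappa_1\perp\kappa_3$ and $I_0\cap\{1,2,3\}^3=\{(1,2,3),(3,2,1)\}$; in the latter case, using the $0$-homogeneity of $\widehat K$, \eqref{eq:transportsystemgen} yields
\begin{equation*}
\d_t a_{0\mid t=0}=-i\bigl(\lambda\bigl(\widehat K(\kappa_1)+\widehat K(\kappa_3)\bigr)+2\mu\bigr)\,\alpha_1\overline{\alpha_2}\,\alpha_3 ,
\end{equation*}
exactly as in the proof of Lemma~\ref{lem:a0DS}. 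Consequently $\d_t a_{0\mid t=0}\not\equiv0$ is achievable precisely when there exist nonzero $\kappa_1\perp\kappa_3$ with $\widehat K(\kappa_1)+\widehat K(\kappa_3)\ne-2\mu/\lambda$ (recall $\lambda\ne0$); and since the scalar in the bracket is then a nonzero constant, $(\alpha_1,\alpha_2,\alpha_3)$ is admissible iff $\alpha_1\overline{\alpha_2}\,\alpha_3\not\equiv0$, i.e. iff $\alpha_1(x)\alpha_2(x)\alpha_3(x)\ne0$ for some $x$ --- which is the last assertion of the lemma. If, on the contrary, $\widehat K(\kappa_1)+\widehat K(\kappa_3)=-2\mu/\lambda$ for every orthogonal pair, the same formula shows $\d_t a_{0\mid t=0}=0$ for every admissible $\Phi_0$ and every data.

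For $d\ge3$ it now suffices to exhibit one orthogonal pair with $\widehat K(\kappa_1)+\widehat K(\kappa_3)\ne-2\mu/\lambda$: one then takes $\kappa_1,\kappa_3$ equal to it, $\kappa_2=\kappa_1+\kappa_3$, and $\alpha_1=\alpha_2=\alpha_3$ any nonzero Schwartz function (a one-line check on the norms $|\kappa_1|=|\kappa_3|$, $|\kappa_2|=\sqrt2\,|\kappa_1|$ shows that $\kappa_1,\kappa_2,\kappa_3$ are distinct and that $I_0\cap\{1,2,3\}^3$ is as above). So suppose, for contradiction, that $\widehat K(u)+\widehat K(v)=c:=-2\mu/\lambda$ for every orthonormal pair $(u,v)$ in $\Sph^{d-1}$. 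Given any $u_1,u_2\in\Sph^{d-1}$, since $d\ge3$ the subspace $u_1^\perp\cap u_2^\perp$ has dimension $\ge d-2\ge1$, hence contains a unit vector $w$; then $\widehat K(u_1)+\widehat K(w)=c=\widehat K(u_2)+\widehat K(w)$, so $\widehat K(u_1)=\widehat K(u_2)$. Thus $\widehat K$ is constant on $\Sph^{d-1}$, contradicting the hypothesis; this settles case (1).

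For $d=2$, the unit vectors orthogonal to $\xi\in\Sph^1$ are $\pm\xi e^{i\pi/2}$, and $\widehat K$ is $0$-homogeneous, so ``there is a nonzero pair $\kappa_1\perp\kappa_3$ in $\R^2$ with $\widehat K(\kappa_1)+\widehat K(\kappa_3)\ne-2\mu/\lambda$'' says exactly that there is $\xi\in\Sph^1$ with $\widehat K(\xi)+\widehat K(\xi e^{i\pi/2})\ne-2\mu/\lambda$, which by the definition of $\mathcal{E}_{-2\mu/\lambda}$ means $\widehat K\notin\mathcal{E}_{-2\mu/\lambda}$. Combined with the first paragraph --- the constructive direction when such a $\xi$ exists, and the vanishing formula otherwise --- this gives the equivalence (i)$\Leftrightarrow$(ii). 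The only step here that is not routine bookkeeping with the resonance relations and the definition of $\mathcal{E}_c$ is the geometric claim used for $d\ge3$ (that a non-constant continuous function on $\Sph^{d-1}$ cannot have $\widehat K(u)+\widehat K(v)$ constant over orthonormal $2$-frames), which as shown boils down to the elementary fact that any two great subspheres of $\Sph^{d-1}$ meet when $d\ge3$.
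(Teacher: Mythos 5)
Your proof is correct and follows essentially the same route as the paper: the reduction of the question to ``does there exist a nonzero orthogonal pair $\k,\k'$ with $\widehat K(\k)+\widehat K(\k')\neq -2\mu/\lambda$'' is identical, and the $d=2$ case then reads off the definition of $\mathcal{E}_c$ exactly as in the paper. For $d\ge3$, your contradiction argument is a small variant of the paper's: the paper picks a third direction $w$ orthogonal to a fixed orthogonal pair $\k,\k'$ (its ``north pole'') and deduces $\widehat K(\k)=\widehat K(\k')=c/2$, whereas you take arbitrary $u_1,u_2\in\Sph^{d-1}$, choose $w\in u_1^\perp\cap u_2^\perp$, and conclude $\widehat K(u_1)=\widehat K(u_2)$ directly. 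Both rest on the same dimension count that $u_1^\perp\cap u_2^\perp\neq\{0\}$ once $d\ge3$; yours is a touch more streamlined.

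One small slip worth flagging: with $\kappa_2=\kappa_1+\kappa_3$, the coefficient coming from \eqref{eq:transportsystemgen} is $\widehat K(\kappa_1-\kappa_2)+\widehat K(\kappa_3-\kappa_2)=\widehat K(-\kappa_3)+\widehat K(-\kappa_1)$. You rewrite this as $\widehat K(\kappa_1)+\widehat K(\kappa_3)$ ``by $0$-homogeneity'', but homogeneity of degree zero only gives $\widehat K(t\xi)=\widehat K(\xi)$ for $t>0$; it does not by itself imply $\widehat K$ is even. (The paper makes the same simplification in Lemma~\ref{lem:a0DS}, where the DS kernel is even, but deliberately keeps the form $\widehat K(\k_1-\k_2)+\widehat K(\k_3-\k_2)$ in the proof of Lemma~\ref{lem:a0nonlocNLS}.) This is harmless for your argument because the set of nonzero orthogonal pairs is invariant under $(\k,\k')\mapsto(-\k',-\k)$, so the existence statement is unchanged — but you should either state the formula with the minus signs or note that the passage from $(\kappa_1,\kappa_3)$ to $(-\kappa_3,-\kappa_1)$ costs nothing.
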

\begin{proof}
Finding $\kappa_1,\kappa_2,\kappa_3 \in\R^d\setminus\{0\}$ 
generating $\k_0=0$ by resonance amounts to finding two non-zero 
and mutually orthogonal vectors, $\k_1$ and $\k_3$: $\k_2$ is then 
determined when forming the rectangle $(\k_0,\k_1,\k_2,\k_3)$. 
Once such vectors are chosen, since the only $(k,\ell,m)\in I_0$ 
corresponding to possibly non-zero products 
$\alpha_k\overline\alpha_\ell\alpha_m$ are $(1,2,3)$ 
and $(3,2,1)$, we have from \eqref{eq:transportsystem}:
\begin{equation*}
\d_t {a_0}_{|_{t=0}} =  - i \left( \lambda 
\left( \widehat K(\k_1-\k_2) + \widehat K(\k_3-\k_2) \right)
+ 2 \mu \right) \alpha_1\overline\alpha_2\alpha_3 . 
\end{equation*}
This yields $\d_t {a_0}_{|_{t=0}} \not = 0$ if and only if we are able to find 
two non-zero and mutually orthogonal vectors $\k$ ($=\k_1-\k_2$) 
and $\k'$ ($=\k_3-\k_2$) such that $\widehat K(\k) + \widehat K(\k') 
\not = -2\mu/\lambda$ (and in this case, the choice for 
$\alpha_1,\alpha_2,\alpha_3$ is clear). 

In dimension $d=2$, the possibility of finding such $\k$, $\k'$ is
equivalent to requiring $K \notin \mathcal{E}_{-2\mu/\lambda}$. 

In dimension $d\ge3$, suppose that for all choice of non-zero 
and mutually orthogonal $\k$ and $\k'$, the restriction of $\widehat 
K$ to the circle $\Sph^1$ centered at the origin, in the plane defined by 
$\{\k,\k'\}$, belongs to $\mathcal{E}_c$, with $c=-2\mu/\lambda$. 
Choosing a direction orthogonal to both $\k$ and $\k'$ defines the 
``north pole'' of an $\Sph^2$ sphere with the above circle as equator. 
Then, the value of $\widehat K$ at this pole must be $c - \widehat 
K(\k)$, as well as $c - \widehat K(\k')$, which implies 
$\widehat K(\k') = \widehat K(\k)$, and since 
$\widehat K(\k') = c - \widehat K(\k)$, we obtain that $\widehat K$ 
is constant (equal to $c/2$), and thus a contradiction.  
\end{proof}
This setting entails the case of the Gross-Pitaevskii equation for 
dipolar quantum gases \eqref{eq:dipole}, for which $d=3$. 
One can also easily generalize this result to higher-order 
nonlinearities ($\nu>1$), at least in dimension $d\ge3$. 
Combining non-elliptic Schr\"odinger operators with higher-order 
nonlinearities and non-local perturbations would lead to more tedious computations and we henceforth do not go into any further detail.

\section{Construction of the exact and approximate solutions}
\label{sec:construct}

\subsection{Analytical framework}
\label{sec:frame}

We first need to introduce  the  Wiener algebra framework 
similarly to what is used in \cite{CDS10}. 
\begin{definition}[Wiener algebra]
 We define
 \begin{equation*}
   W(\R^d)= \left\{ f\in {\mathcal S}'(\R^d,\C),\ \
     \|f\|_W:=\|\widehat f\|_{L^1(\R^d)} <+ \infty\right\}. 
 \end{equation*}
\end{definition}
The space  $W$ enjoys the following elementary properties (see 
\cite{CoLa09,CDS10}): 
\begin{enumerate}
\item $W$ is a Banach space, continuously embedded into
$L^\infty(\R^d)$.
\item $W$ is an algebra, and the
  mapping $(f,g)\mapsto fg$ is continuous from $W^2$ to
  $W$, with
  \begin{equation*}
    \|fg\|_W\le \|f\|_W\|g\|_W,\quad \forall f,g\in W.
  \end{equation*}
\item  For all $t\in \R$, the free Schr\"odinger group
  \begin{equation*}
    U^\eps(t)= \exp\left(i \eps \frac {t}{2} \, \Delta_\eta \right)  
\end{equation*}
is unitary on $W$.  
\end{enumerate}
In the following we shall seek an approximation result in $W\cap
L^2=\F(L^1)\cap \F(L^2)$. The basic idea is to first prove the result
in $W$ only  
and then  infer the
corresponding statement in $L^2$. We shall therefore use the extra
properties:
\begin{itemize}
\item[(2a)] By Plancherel formula and Young's inequality (or simply
  $\|f\|_{L^\infty}\le \|f\|_W$)
\begin{equation*}
    \forall f\in W, \ \forall g\in L^2(\R^d), \quad
    \|fg\|_{L^2(\R^d)}\le \|f\|_W\|g\|_{L^2(\R^d)}.
  \end{equation*}
\item[(3a)] For all $t\in \R$, the free Schr\"odinger group
 $   U^\eps(t)$
is unitary on $L^2(\R^d)$. 
\end{itemize}

\subsection{Existence results}
\label{sec:existence}
We first treat the case of the exact solution $u^\eps$, and then
address the construction of the approximate solution $u^\eps_{\rm
  app}$. 
\begin{lemma} 
Consider for $x\in \R^d$ the initial value problem 
\begin{equation}\label{eq:nlsivp}
  i\eps\d_t u^\eps +\frac{\eps^2}{2}\Delta_\eta u^\eps =  \eps \l 
  \(K\ast|u^\eps|^{2\nu}\)u^\eps + \mu \eps  |u^\eps|^{2\nu} u^\eps ,
  \quad u^\eps(0,x)=u_0^\eps(x), 
\end{equation}
where $\nu\in \N$, $\l, \mu \in \R$, 
and $K\in \Sch '(\R^d)$ is such that $\widehat K\in L^\infty(\R^d)$. 
If the initial data satisfies  $u_0^\eps\in W\cap L^2$, 
then there exist $T^\eps>0$ and a unique solution 
$u^\eps \in C([0,T^\eps];W\cap L^2)$ to \eqref{eq:nlsivp}. 
\end{lemma}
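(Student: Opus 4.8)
The plan is to set this up as a standard fixed-point argument on Duhamel's formula, using the Wiener algebra $W\cap L^2$ as the functional framework. Writing the equation in semiclassical Duhamel form, a solution of \eqref{eq:nlsivp} is a fixed point of
\begin{equation*}
\Phi(u^\eps)(t) = U^\eps(t)u_0^\eps - \frac{i}{\eps}\int_0^t U^\eps(t-\tau)\Big(\eps\l\,\big(K\ast|u^\eps|^{2\nu}\big)u^\eps + \eps\mu\,|u^\eps|^{2\nu}u^\eps\Big)(\tau)\,d\tau,
\end{equation*}
where $U^\eps(t)=\exp\!\big(i\eps\tfrac{t}{2}\Delta_\eta\big)$; note the two factors of $\eps$ cancel, so there is in fact no singular factor and the argument is $\eps$-uniform in the relevant estimates. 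First I would work in the space $X_T := C([0,T];W\cap L^2)$ equipped with the norm $\|u\|_{X_T}=\sup_{t\in[0,T]}\big(\|u(t)\|_W + \|u(t)\|_{L^2}\big)$, and consider the closed ball $B_R$ of radius $R = 2(\|u_0^\eps\|_W+\|u_0^\eps\|_{L^2})$.

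The key estimates are all consequences of the three structural facts recalled in Section~\ref{sec:frame}: $U^\eps(t)$ is unitary on both $W$ and $L^2$ (properties (3) and (3a)); $W$ is a Banach algebra with $\|fg\|_W\le\|f\|_W\|g\|_W$ (property (2)); and $\|fg\|_{L^2}\le\|f\|_W\|g\|_{L^2}$ (property (2a)). Since $\widehat K\in L^\infty$, the operator $f\mapsto K\ast f$ is bounded on $W$ (because $\|\widehat{K\ast f}\|_{L^1}=\|(2\pi)^{d/2}\widehat K\,\widehat f\|_{L^1}\le (2\pi)^{d/2}\|\widehat K\|_{L^\infty}\|\widehat f\|_{L^1}$) and on $L^2$ (by Plancherel). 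Hence for $u\in B_R$ one controls $\|(K\ast|u|^{2\nu})u\|_W\lesssim \|u\|_W^{2\nu+1}$ and $\|(K\ast|u|^{2\nu})u\|_{L^2}\lesssim \|u\|_W^{2\nu}\|u\|_{L^2}$, and similarly for the local term $|u|^{2\nu}u$. Integrating in time over $[0,T]$ and using unitarity of $U^\eps$ then gives $\|\Phi(u)\|_{X_T}\le \|u_0^\eps\|_W+\|u_0^\eps\|_{L^2} + C T R^{2\nu+1}$, so $\Phi$ maps $B_R$ into itself once $T$ is small enough (depending on $R$, i.e.\ on $\|u_0^\eps\|_{W\cap L^2}$, and on $C$, but not on $\eps$). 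For the contraction estimate I would expand the multilinear differences $|u|^{2\nu}u - |v|^{2\nu}v$ and $(K\ast|u|^{2\nu})u - (K\ast|v|^{2\nu})v$ as telescoping sums of products, each term having one factor of $u-v$ and the remaining $2\nu$ factors of $u$ or $v$ with $W$-norm $\le R$; the same algebra/boundedness estimates yield $\|\Phi(u)-\Phi(v)\|_{X_T}\le C T R^{2\nu}\|u-v\|_{X_T}$, which is a contraction for $T$ small. The Banach fixed point theorem then produces the unique $u^\eps\in C([0,T^\eps];W\cap L^2)$.

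I do not expect a serious obstacle here: the only mild subtlety is bookkeeping in the multilinear difference estimates (particularly making sure that in the $L^2$ estimate exactly one factor is measured in $L^2$ and all others in $W$, which is where property (2a) is essential rather than the plain algebra property), together with checking that $K\ast\cdot$ genuinely acts boundedly on $W$ and $L^2$ from $\widehat K\in L^\infty$ alone. Uniqueness on the full interval follows in the usual way by a Gronwall-type argument applied to the difference of two solutions with the same data. I would remark that this lemma is purely a local existence statement in a convenient algebra; the $\eps$-uniformity of $T^\eps$ — which matters for the subsequent geometric-optics analysis — is visible from the estimates above but is not the point being asserted here.
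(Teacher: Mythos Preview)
Your argument is correct and follows essentially the same strategy as the paper: a Banach fixed-point/contraction argument on Duhamel's formula, driven by the algebra property of $W$, the estimate $\|fg\|_{L^2}\le\|f\|_W\|g\|_{L^2}$, and the boundedness of $K\ast\cdot$ on both $W$ and $L^2$ coming from $\widehat K\in L^\infty$.

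The one organizational difference worth noting is that the paper proceeds in two stages rather than one. First it obtains local existence in $C([0,T^\eps];W)$ by the fixed-point argument (invoking the analogous results in \cite{CDS10} and \cite{GMS-p}); then, with the $W$-solution in hand, it observes that $|u^\eps|^{2\nu}\in L^\infty$ and $K\ast|u^\eps|^{2\nu}\in L^\infty$ act as bounded potentials, so the $L^2$-regularity follows from linear semi-group theory rather than a second nonlinear contraction. Your route treats $W$ and $L^2$ simultaneously, which is perfectly fine and arguably cleaner in isolation. The paper's two-step decomposition, however, mirrors the structure used later in Section~\ref{sec:proofWNLGO}, where the error $w^\eps$ is first controlled in $W$ by a semilinear Gronwall argument and only afterwards in $L^2$ by a ``linear'' argument exploiting the already-established $W$-bound; so its choice here is partly to set up that pattern.
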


\begin{proof} The existence of a unique local in time solution in $W$ 
follows by combining the results of \cite[Proposition 5.8]{CDS10} 
and \cite[Lemma 3.3]{GMS-p}.  
In both cases property (2) together with the fact that  
$\widehat K\in L^\infty(\R^d)$  
implies that the nonlinearities are locally Lipschitz and the result
then follows by a standard fixed point argument.  
The existence of an $L^2$ solution then follows from the fact that 
for all $t\in [0,T^\eps]$, $ u^\eps(t,\cdot) \in L^\infty(\R^d)$, 
since $W \hookrightarrow L^\infty$.  
Therefore $|u^\eps|^{2\nu} \in L^\infty$ can be viewed as a bounded perturbation
potential, and using Plancherel formula we have 
\begin{align*}
\| \(K\ast |u|^{2\nu}\) v \|_{L^2} 
& \le \| K\ast |u|^{2\nu}\|_{L^\infty} \|v\|_{L^2} 
\le \| K\ast |u|^{2\nu}\|_W \|v\|_{L^2} \\
&\le \| \widehat K \|_{L^\infty}
\left\lVert \widehat{|u|^{2\nu}} \right\rVert _{L^1} \|v \|_{L^2} 
\le \| \widehat K \|_{L^\infty} 
\left\lVert  u \right\rVert_W^{2\nu} \| v\|_{L^2}. 
\end{align*}
Thus also the non-local term can be seen like a bounded perturbation
and the existence then follows by standard semi-group theory. 
\end{proof}

We now pass to an existence result for the transport system
\eqref{eq:transportsystem}. To this end we define the following
space for the amplitudes. 
\begin{definition}
Define  
$$
X(\R^d) = \{ a=(a_j)_{j\in J} \mid (\widehat{a}_j)_{j\in J}\in
\ell^1(J;L^1\cap L^2(\R^d)) \},$$
equipped with the norm
$$\|a\|_{X(\R^d)} = \sum_{j\in J}\big(\|\widehat{a}_j\|_{L^1} +
\|\widehat{a}_j\|_{L^2} \big).$$ 
For $s\in \N$, we define
\begin{equation*}
  X^s(\R^d)=\{ a \in X(\R^d)\mid \(\<\k_j\>^s a_j\)_{j\in J}\in
  X(\R^d)\text{ and } \d^\beta_x a \in X(\R^d),\quad
  \forall |\beta|\le s\},
\end{equation*}
equipped with the norm
$$\|a\|_{X^s(\R^d)} = \| \(\<\k_j\>^s a_j\)_{j\in J} \|_{X(\R^d)} 
+ \sum_{|\beta|\le s} \| \d^\beta_x a \|_{X(\R^d)}.$$ 
\end{definition}
We can state the following local in time existence result. 

\begin{lemma} \label{lem:existprof}
Let $d\ge 1$, $\nu\in \N\setminus\{0\}$, $\l, \mu \in \R$, 
and $K\in \Sch '(\R^d)$ such that $\widehat K\in L^\infty(\R^d)$. 
For all $\alpha=(\alpha_j)_{j\in J}\in X( \R^d)$, there exist $T>0$ 
and a unique solution 
\begin{equation*}
  t \mapsto a(t)=(a_j(t))_{j\in J}\in C([0,T],X( \R^d)),
\end{equation*}
to the transport system~\eqref{eq:transportsystemgen}, with
$a(0,x)=\alpha(x)$. Furthermore, the total mass is conserved: 
\begin{equation*}
  \frac{d}{dt}\sum_{j \in J}\|a_j(t) \|^2_{L^2} = 0.
\end{equation*} 
\end{lemma}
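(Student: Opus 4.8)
The plan is to set up a fixed-point argument in $C([0,T],X(\R^d))$ for the integral formulation of \eqref{eq:transportsystemgen}. Along the characteristics $t\mapsto x - t\,(\eta_1\k_{j,1},\dots,\eta_d\k_{j,d})$ the left-hand side becomes a plain time-derivative, so writing $b_j(t,x) = a_j\big(t, x + t(\eta_1\k_{j,1},\dots,\eta_d\k_{j,d})\big)$ converts the system into $\d_t b_j = N_j(b)$ with $b_j(0)=\alpha_j$, where $N_j$ collects the three nonlinear sums on the right. The key observation is that the characteristic transport is an isometry on each of $L^1$, $L^2$ at the Fourier-transform level only up to a translation — concretely $\widehat{b_j}(t,\xi) = e^{it\,\xi\cdot(\eta_1\k_{j,1},\dots,\eta_d\k_{j,d})}\widehat{a_j}(t,\xi)$ — so $\|b_j(t)\|_{L^1\cap L^2}=\|a_j(t)\|_{L^1\cap L^2}$, and the $X$-norm is preserved by the flow. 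Hence it suffices to run the contraction on the map $\Gamma : b \mapsto \big(\alpha_j + \int_0^t N_j(b)(s)\,ds\big)_{j\in J}$.

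The heart of the matter is the estimate $\|N(b)\|_X \lesssim \|b\|_X^{2\nu+1}$ (and the analogous Lipschitz bound). For the two local sums this is immediate from property (2) in the form $\|\widehat{fg}\|_{L^1}\le\|\widehat f\|_{L^1}\|\widehat g\|_{L^1}$, combined with property (2a) $\|\widehat{fg}\|_{L^2}\le\|\widehat f\|_{L^1}\|\widehat g\|_{L^2}$, so that a $(2\nu+1)$-fold product of $a_{\ell_i}$'s is bounded in $L^1\cap L^2$ by $\prod_i\|\widehat{a_{\ell_i}}\|_{L^1\cap L^2}$; then one sums over $I_j$ and over $j$. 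The crucial point here is combinatorial: each summand $a_{\ell_1}\overline a_{\ell_2}\cdots a_{\ell_{2\nu+1}}$ with $(\ell_1,\dots,\ell_{2\nu+1})\in I_j$ contributes $\widehat K(\k_j-\k_{\ell_{2\nu+1}})$ or a $\mu$-coefficient, both bounded by $\|\widehat K\|_{L^\infty}$ resp. $|\mu|$, and — because $\k_j = \k_{\ell_1}-\k_{\ell_2}+\dots+\k_{\ell_{2\nu+1}}$ — summing $\sum_j\sum_{(\ell_1,\dots,\ell_{2\nu+1})\in I_j}$ is dominated by the free sum $\sum_{(\ell_1,\dots,\ell_{2\nu+1})\in J^{2\nu+1}}$, which factorizes as $\big(\sum_{\ell\in J}\|\widehat{a_\ell}\|_{L^1\cap L^2}\big)^{2\nu+1}=\|b\|_X^{2\nu+1}$. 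For the first sum the nonlocal factor $E(a_{\ell_1}\overline a_{\ell_2}\cdots\overline a_{\ell_{2\nu}})$ is handled the same way since $\widehat K\in L^\infty$ gives $\|\widehat{E(g)}\|_{L^1\cap L^2}=\|\widehat K\,\widehat g\|_{L^1\cap L^2}\le\|\widehat K\|_{L^\infty}\|\widehat g\|_{L^1\cap L^2}$, and then multiplication by $a_j$ is again controlled by property (2)/(2a). This makes $\Gamma$ a contraction on a ball of $C([0,T],X)$ for $T$ small depending only on $\|\alpha\|_X$, $\nu$, $|\l|$, $|\mu|$, $\|\widehat K\|_{L^\infty}$, yielding existence and uniqueness.

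For the mass conservation identity I would argue on smooth data first: differentiating $\sum_j\|a_j(t)\|_{L^2}^2$ and using the equation, the transport part $\sum_j\RE\int \overline a_j(\eta_\ell\k_{j,\ell}\d_{x_\ell})a_j = -\tfrac12\sum_j\int (\eta_\ell\k_{j,\ell}\d_{x_\ell})|a_j|^2 = 0$ by integration by parts, while the nonlinear part is $-2\,\IM\sum_j\int \overline a_j\,N_j^{\rm nl}(a)\,dx$; one checks this double sum vanishes by a symmetrization/relabelling argument — pairing the term indexed by $(j;\ell_1,\dots,\ell_{2\nu+1})\in I_j$ with the one obtained by swapping the roles of $\k_j$ and $\k_{\ell_{2\nu+1}}$ (legitimate since $\widehat K(\k_j-\k_{\ell_{2\nu+1}})$ together with the mirrored coefficient $\widehat K(\k_{\ell_{2\nu+1}}-\k_j)$ sum appropriately, using $0$-homogeneity and reality of the kernel, exactly as in the proof of Lemma~\ref{lem:a0DS}), so the total contribution is purely real and drops out of the imaginary part. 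The general $X$-data case follows by density and the continuous dependence from the fixed-point construction. The main obstacle I anticipate is precisely this last cancellation: keeping track of which permutations of the multi-index $(\ell_1,\dots,\ell_{2\nu+1})$ induce sign changes (the even positions carry conjugates) and verifying that the nonlocal coefficients $\widehat K(\k_j-\k_{\ell_{2\nu+1}})$ pair up correctly is the one genuinely delicate bookkeeping step; everything else is a routine Wiener-algebra fixed point.
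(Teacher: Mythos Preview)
Your existence and uniqueness argument is correct and follows the same route as the paper: a Cauchy--Lipschitz fixed point in $C([0,T],X)$, the key input being the multilinear estimate $\|N(a)\|_X\lesssim\|a\|_X^{2\nu+1}$. The paper's proof is in fact terser --- it simply cites \cite{CDS10} for the fixed point and writes out only the one new estimate for the nonlocal factor $E$, which you also handle correctly. Your combinatorial remark (that $\sum_{j}\sum_{I_j}$ is dominated by the free sum over $J^{2\nu+1}$ because $j$ is determined by $(\ell_1,\dots,\ell_{2\nu+1})$) is right and essential.

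There is, however, a genuine gap in your mass-conservation argument. The pairing you describe --- swap only $\k_j$ and $\k_{\ell_{2\nu+1}}$ --- does not preserve the resonance set: from $\phi_j=\phi_{\ell_1}-\phi_{\ell_2}+\cdots+\phi_{\ell_{2\nu+1}}$ one does \emph{not} get $\phi_{\ell_{2\nu+1}}=\phi_{\ell_1}-\phi_{\ell_2}+\cdots+\phi_j$ except in the degenerate case $\phi_j=\phi_{\ell_{2\nu+1}}$. The correct involution swaps $j\leftrightarrow\ell_{2\nu+1}$ \emph{together with} $\ell_1\leftrightarrow\ell_2$, $\ell_3\leftrightarrow\ell_4$, \dots, $\ell_{2\nu-1}\leftrightarrow\ell_{2\nu}$; this simultaneously negates the resonance relation (hence preserves it) and sends the integrand $\overline{a_j}\,a_{\ell_1}\overline{a_{\ell_2}}\cdots a_{\ell_{2\nu+1}}$ to its complex conjugate, so the $\mu$-sum is real and its imaginary part vanishes. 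For the $\lambda$-terms the same involution works, but the coefficient $\widehat K(\k_j-\k_{\ell_{2\nu+1}})$ becomes $\widehat K(\k_{\ell_{2\nu+1}}-\k_j)$ and one needs $\widehat K(-\xi)=\overline{\widehat K(\xi)}$ (equivalently $K$ real-valued) to pair these --- the $0$-homogeneity you invoke plays no role here. This reality condition holds in every application in the paper but is not listed among the hypotheses of the lemma; the paper's own proof simply defers mass conservation to \cite[Lemma~3.1]{CDS10} without comment, so this subtlety is glossed over there as well.
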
 

\begin{proof} 
This result follows from the arguments given in  \cite{CDS10} (see
\cite[Lemma 3.1]{CDS10} for the last point). The main aspect to remark
is that the action of the nonlinear term $E$ raises no new difficulty,
in view of the estimate:
\begin{align*}
\| E(a_1 \bar a_2 \dots \bar a_{2\nu}) a_{2\nu+1} \|_X
& = \| \mathcal{F}(E (a_1 \bar a_2 \dots \bar a_{2\nu}))  \ast
\widehat {a}_{2\nu+1} \|_{L^1} \\ 
& \quad + \| \mathcal{F}(E (a_1 \bar a_2 \dots \bar a_{2\nu}))  \ast 
\widehat {a}_{2\nu+1} \|_{L^2}  \\
& \le  \| \mathcal{F}(E (a_1 \bar a_2 \dots \bar a_{2\nu})) \|_{L^1}  
\| a_{2\nu+1} \|_X  
\end{align*}
by Young's inequality. Since $\widehat {E (f g)} = (2\pi)^{-d/2}\widehat K
(\widehat f\ast \widehat g)$, we obtain
\begin{align*}
\| E(a_1 \bar a_2 \dots \bar a_{2\nu}) a_{2\nu+1} \|_X \le 
& \ \| \widehat K \|_{L^\infty}   
\| a_1 \|_X \dots \| a_{2\nu+1} \|_X.
\end{align*}
Since the same holds true for $E = {\rm Id}$ this shows that the
nonlinearity on the right hand side of \eqref{eq:transportsystem},
defines a continuous mapping from $X(\R^d)^{2\nu+1}$ to $X(\R^d)$.  
The existence of a local in-time solution 
then follows by a standard Cauchy-Lipschitz argument in the same way
as it is done in \cite{CDS10}. 
\end{proof}
At this stage, we have constructed the approximate solution
\begin{equation*}
  u_{\rm app}^\eps(t,x)=\sum_{j\in J}a_j(t,x)e^{i\phi_j(t,x)/\eps},
\end{equation*}
where the set $J$ and the corresponding $\phi_j$'s are like
constructed in Section~\ref{sec:NLSphases} and
Section~\ref{sec:phasesDS}, respectively, and 
the profiles are given by Lemma~\ref{lem:existprof}. Since
\begin{equation*}
  (a_j)_{j\in J} \in C([0,T],X(\R^d)),
\end{equation*}
we have in particular
\begin{equation*}
  u_{\rm app}^\eps \in C([0,T],W\cap L^2(\R^d)). 
\end{equation*}
More regularity will be needed in the justification of geometric optics.
\begin{lemma}\label{lem:morereg} 
Under the same assumption as in
  Lemma~\ref{lem:existprof} we have: 
\begin{enumerate}
  \item If $\alpha\in X^s(\R^d)$ for $s\in \N$, then the conclusions of
  Lemma~\ref{lem:existprof} remain true with $X(\R^d)$ replaced by
  $X^s(\R^d)$.
  \item Let  $\alpha\in X^2(\R^d)$. Then in addition 
\begin{equation*}
  t \mapsto a(t)=(a_j(t))_{j\in J}\in
C^1([0,T],X( \R^d)).
\end{equation*}
\end{enumerate}
\end{lemma}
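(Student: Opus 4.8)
The plan is to bootstrap from Lemma~\ref{lem:existprof} by differentiating the transport system with respect to the spatial variables and with respect to time, treating the resulting equations as linear systems for the higher-order quantities with source terms that are controlled by what is already known. For part~(1), I would proceed by induction on $s$. Suppose the conclusion holds for $s-1$. Writing $\partial_x^\beta$ of \eqref{eq:transportsystemgen} for $|\beta|\le s$, the transport operator $\partial_t + \sum_\ell \eta_\ell \k_{j,\ell}\partial_{x_\ell}$ commutes with $\partial_x^\beta$ (it has constant coefficients), so $\partial_x^\beta a_j$ satisfies the same transport equation with a right-hand side obtained by applying the Leibniz rule to the $(2\nu+1)$-linear (and $E$-decorated) terms. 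Each resulting summand is either a product in which all factors have at most $s$ derivatives and at least one has strictly fewer than $s$, or the ``top'' terms where one factor carries all $s$ derivatives and the rest are undifferentiated. The key estimate is that $X^s$ is stable under the nonlinear maps: using property~(2), property~(2a), Young's inequality and the bound $\|\widehat K\|_{L^\infty}<\infty$ exactly as in the proof of Lemma~\ref{lem:existprof}, one shows that the map
\begin{equation*}
(a^{(1)},\dots,a^{(2\nu+1)})\mapsto \Big(E(a^{(1)}_{\ell_1}\bar a^{(2)}_{\ell_2}\cdots \bar a^{(2\nu)}_{\ell_{2\nu}})a^{(2\nu+1)}_{\ell_{2\nu+1}}\Big)_{j\in J}
\end{equation*}
and its local-nonlinearity analogue are continuous (in fact polynomially bounded) from $X^s(\R^d)^{2\nu+1}$ to $X^s(\R^d)$; the factor $\<\k_j\>^s$ is harmless because $\k_j = \sum_k(-1)^{k+1}\k_{\ell_k}$ on $I_j$, so $\<\k_j\>^s\lesssim \prod_k\<\k_{\ell_k}\>^s$, and the weighted $\ell^1$ structure absorbs this by a Young-type convolution inequality on the index set. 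With these algebra estimates in hand, a standard Cauchy--Lipschitz/fixed-point argument in $X^s$ gives a unique solution on some $[0,T_s]$, and a continuation argument (or simply the observation that the $X$-solution of Lemma~\ref{lem:existprof} is already global enough on $[0,T]$, combined with an a priori $X^s$ bound via Gronwall applied to $\frac{d}{dt}\|a(t)\|_{X^s}^2$) identifies $T_s$ with the $T$ already obtained, since the $X$-norm controls the Lipschitz constant that drives the $X^s$ estimate. Mass conservation is unchanged, being a statement about the $L^2$ norms only.

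For part~(2), once $\alpha\in X^2(\R^d)$ we know from part~(1) that $a\in C([0,T],X^2(\R^d))$. Then the right-hand side of \eqref{eq:transportsystemgen} is, term by term, a continuous $X^2$-valued function of $t$, hence in particular continuous with values in $X(\R^d)$; and the transport part $\sum_\ell \eta_\ell\k_{j,\ell}\partial_{x_\ell}a_j$ is continuous with values in $X(\R^d)$ precisely because $a\in C([0,T],X^1(\R^d))$ (one spatial derivative costs one level of regularity). Therefore $\partial_t a_j$, being the sum of these, lies in $C([0,T],X(\R^d))$, which is exactly the assertion $a\in C^1([0,T],X(\R^d))$. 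One only needs $X^2$ rather than $X^1$ here to be safe about the nonlinear term: the $E$-terms and the cubic-type terms involve products of the $a_j$'s, and to get continuity of those products in $X$ uniformly in $t$ it is convenient to have the $a_j$ themselves in an algebra with a little room to spare; $X^1$ would in fact suffice for the nonlinearity, but $X^2$ is what is available and what is needed for the subsequent WKB error analysis anyway.

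The main obstacle — really the only nontrivial point — is the convolution/summability bookkeeping on the index set $J$ in the weighted spaces $X^s$: one must check that the triple (or $(2\nu+1)$-fold) sums defining the nonlinear terms, reindexed by the resonance relations $I_j$, remain in $\ell^1(J)$ after insertion of the weights $\<\k_j\>^s$, and that the resulting bilinear/multilinear estimate has the product form $\|a\|_{X^s}^{2\nu+1}$. This is the place where Assumption~\ref{hyp:sum} and the subgroup structure of $\Phi$ (noted in the remarks of Section~\ref{sec:NLSphases}) are used, and it is handled exactly as the analogous estimate in \cite{CDS10}; the presence of $E$ adds only the harmless factor $\|\widehat K\|_{L^\infty}$, as already demonstrated in the proof of Lemma~\ref{lem:existprof}. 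Everything else is the routine propagation of regularity for a linear transport equation with a smooth, locally Lipschitz nonlinear source, plus Gronwall.
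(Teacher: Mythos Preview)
Your proposal is correct and follows the same two-line approach as the paper (``The first point is straightforward. The second point stems from the first one, in view of the transport equations''). One small correction, however: in part~(2) it is \emph{not} $X^1$ that places the transport term $(\k_j\cdot\nabla_\eta a_j)_{j\in J}$ in $X$, but genuinely $X^2$. The definition of $X^s$ separates the $\langle\k_j\rangle^s$ weight (applied to $a_j$) from the spatial derivatives (applied with no weight), so $X^1$ controls $\sum_j\langle\k_j\rangle\|a_j\|$ and $\sum_j\|\nabla a_j\|$ but not the mixed quantity $\sum_j|\k_j|\,\|\nabla a_j\|$ that the transport term produces. From $X^2$ one recovers this via the pointwise inequality $|\k_j|\,|\xi|\le\tfrac12(|\k_j|^2+|\xi|^2)$ under the integral sign, which yields $\sum_j|\k_j|\,\|\nabla a_j\|_{W\cap L^2}\lesssim\|a\|_{X^2}$. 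By contrast, the nonlinear terms are already continuous $X\to X$ by the algebra estimate of Lemma~\ref{lem:existprof}, so your attribution of the $X^2$ requirement to them is misplaced; it is the transport part that forces $s=2$.
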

\begin{proof} The
first point is straightforward. 
The second point stems from the first one, in view of the transport
equations, \eqref{eq:transportNLS}, \eqref{eq:transportsystem},
respectively.
\end{proof}
\begin{remark}\label{rem:forinflation}
  In particular, if the initial profiles $(\alpha_j)_{j\in
    J_0}$ belong to the Schwartz class, then $(a_j)_{j\in J}\in 
C([0,T],X^s( \R^d))$ for all $s\in \N$.  
\end{remark}

\section{Justification of multiphase geometric optics}
\label{sec:WNLGO}

In this section we justify the multiphase geometric optics
approximation. We assume 
\begin{equation*}
  u^\eps(0,x) = u^\eps_{\rm app}(0,x)=
\sum_{j\in J_0} \alpha_j(x) e^{i\k_j\cdot x/\eps},
\end{equation*}
with $(\alpha_j)_{j\in J_0}\in X(\R^d)$. The above 
Section~\ref{sec:construct} provides an approximate solution 
$u_{\rm app}^\eps \in C([0,T],W\cap L^2(\R^d))$.  With this existence
time $T$ (independent of $\eps$), we prove:
\begin{theorem}\label{theo:wnlgo}
Let $d\ge 1$, $\nu\in \N$, $\l, \mu \in \R$, and $E$ satisfy Assumption \ref{ass:E}. Let    
$\Phi_0\subset \Z^d$, with corresponding amplitudes $(\alpha_j)_{j\in J_0}\in X^2(\R^d)$. 

Then there exists $\eps_0>0$, such that for any $0<\eps\le \eps_0$, 
the solution to the Cauchy problem \eqref{eq:genWNLGO}-
\eqref{eq:genWNLGOini} satisfies
$u^\eps \in L^\infty([0,T];W\cap L^2)$. 
In addition, $u_{\rm app}^\eps$ approximates $u^\eps$ in the following sense
\begin{equation}\label{eq:error}
\left\lVert u^\eps-u_{\rm app}^\eps
\right\rVert_{L^\infty([0,T]; L^\infty\cap L^2)}\le  
\left\lVert u^\eps-u_{\rm app}^\eps
\right\rVert_{L^\infty([0,T];W\cap L^2)}\Tend \eps 0 0.
\end{equation}
When $\l=0$, $u_{\rm app}^\eps$ approximates $u^\eps$ up  
to $\O(\eps)$:
\begin{equation*}
\left\lVert u^\eps-u_{\rm app}^\eps
\right\rVert_{L^\infty([0,T]; L^\infty\cap L^2)}\le  
\left\lVert u^\eps-u_{\rm app}^\eps
\right\rVert_{L^\infty([0,T];W\cap L^2)}\lesssim \eps. 
\end{equation*}
\end{theorem}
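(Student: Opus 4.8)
The plan is to justify the geometric optics approximation by deriving and controlling an equation for the error $w^\eps := u^\eps - u_{\rm app}^\eps$. First I would set up the problem cleanly in the Wiener algebra $W$ (and then $L^2$), using the Duhamel formulation with the free group $U^\eps(t) = \exp(i\eps\frac{t}{2}\Delta_\eta)$, which is unitary on both $W$ and $L^2$ by the properties listed in Section~\ref{sec:frame}. Since $u_{\rm app}^\eps$ is built from the profiles $(a_j)_{j\in J}$ solving \eqref{eq:transportsystemgen} with data in $X^2(\R^d)$, Lemma~\ref{lem:morereg} guarantees $a_j \in C^1([0,T];X(\R^d))$, which is exactly the regularity needed to make the residual small. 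Plugging $u_{\rm app}^\eps$ into \eqref{eq:genWNLGO} produces a source term $r^\eps$ (the consistency error): by construction the $\O(\eps^0)$ terms vanish because every $\phi_j$ is characteristic, and the $\O(\eps^1)$ terms vanish because the $a_j$ solve the transport system — so $r^\eps$ consists only of (i) genuinely $\O(\eps)$ remainders coming from the $\eps\d_t$ and $\eps^2\Delta_\eta$ acting on amplitudes, and (ii) the \emph{non-resonant} oscillatory terms $e^{i\phi/\eps}$ with $\phi$ not characteristic, plus (iii) the discrepancy between the exact non-local operator $E$ applied to oscillatory products and its formal limit $\widehat K(\k_k-\k_\ell)$ appearing in \eqref{eq:transportsystem}.

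The core of the argument is then a bootstrap/continuity estimate on $w^\eps$. Writing $w^\eps(t) = -i\eps^{-1}\int_0^t U^\eps(t-\tau)\big(N(u^\eps) - N(u_{\rm app}^\eps) + r^\eps\big)(\tau)\,d\tau$ where $N$ denotes the full (local plus non-local) nonlinearity, one uses that $N$ is locally Lipschitz on $W$ (property (2) and $\widehat K \in L^\infty$) to get, on the set where $\|w^\eps\|_{L^\infty_t W} \le 1$, an estimate of the form $\|w^\eps(t)\|_W \lesssim \eps^{-1}\int_0^t \|w^\eps(\tau)\|_W\,d\tau + \eps^{-1}\int_0^t \|r^\eps(\tau)\|_W\,d\tau$. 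Grönwall then gives $\|w^\eps\|_{L^\infty_t W} \lesssim \eps^{-1}\|r^\eps\|_{L^1_t W}\, e^{CT/\eps}$, which is useless unless $\|r^\eps\|_{L^1_t W}$ is exponentially small — so the naive approach fails, and this is the main obstacle. The resolution, following \cite{CDS10}, is that the dangerous $\eps^{-1}$ factor only multiplies the \emph{oscillatory} part of $r^\eps$, and for a non-resonant phase $\phi$ one gains an extra power of $\eps$ by a non-stationary phase / integration-by-parts argument: $\eps^{-1}\int_0^t U^\eps(t-\tau)\big(b\,e^{i\phi/\eps}\big)d\tau = \O(\eps)$ in $W$, because $\d_t\phi + \frac12\sum_\ell \eta_\ell(\d_{x_\ell}\phi)^2 \neq 0$ lets one integrate by parts in $\tau$ (this is the mechanism alluded to in Remark~\ref{rem:SP}, "non-resonant phases generate an error of order $\O(\eps)$"). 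One must check there are only finitely many relevant phases up to the cutoff time, or more precisely control the $\ell^1$-type sum over $J$ using the $X$-norm bounds and Assumption~\ref{hyp:sum}; the $\Z^d$ hypothesis on $\Phi_0$ is what prevents small divisors, i.e. ensures the non-characteristic phases stay uniformly bounded away from characteristic.

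For the non-local term I would prove the key lemma that $e^{-i\kappa\cdot x/\eps}E\big(b\,e^{i\kappa\cdot x/\eps}\big) \to \widehat K(\kappa)\, b$ in $W$ (and $L^2$) as $\eps \to 0$, with a quantitative rate, using the computation already sketched in Section~\ref{sec:nonloc}: after the change of variables $\zeta = \eps\xi + \kappa$ one has $\widehat{I^\eps}(\cdot) = \widehat K(\eps\,\cdot + \kappa)\,\widehat b(\cdot)$, so $\|I^\eps - \widehat K(\kappa)b\|_W = \|(\widehat K(\eps\cdot+\kappa) - \widehat K(\kappa))\widehat b\|_{L^1} \to 0$ by dominated convergence, since $\widehat K \in L^\infty$ is continuous at $\kappa \neq 0$ (Assumption~\ref{ass:E}) and $\widehat b \in L^1$. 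This is where the loss of an explicit rate when $\lambda \neq 0$ comes from — continuity of $\widehat K$ alone gives no modulus of continuity — whereas for $\lambda = 0$ this term is absent and every remaining contribution to $r^\eps$ is genuinely $\O(\eps)$, yielding the sharper bound. Once $\|r^\eps\|$ is shown to be $o(1)$ (resp. $\O(\eps)$) \emph{after} absorbing the non-stationary gain, a standard continuity argument closes the bootstrap: $\|w^\eps\|_{L^\infty([0,T];W)} \le 1$ is propagated, hence in fact tends to $0$, and the same scheme run in $L^2$ using properties (2a) and (3a) — with $u_{\rm app}^\eps$ already known to be bounded in $W$ so that $|u_{\rm app}^\eps|^{2\nu}$ is an $L^\infty$ bounded potential — gives the $L^2$ half of \eqref{eq:error}. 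Finally, the existence of $u^\eps$ on the full interval $[0,T]$ (a priori only local by the earlier lemma) follows from the uniform bound $\|u^\eps\|_{L^\infty_t W} \le \|u_{\rm app}^\eps\|_{L^\infty_t W} + 1$ via the usual continuation criterion.
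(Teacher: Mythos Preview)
Your overall strategy matches the paper's almost exactly: the same three-way decomposition of the residual (the non-local discrepancy $r_1^\eps$, the non-resonant oscillations $r_2^\eps$, and the genuinely $\O(\eps)$ term $r_3^\eps=-\tfrac{\eps}{2}\sum_j e^{i\phi_j/\eps}\Delta_\eta a_j$), the non-stationary-phase gain for $r_2^\eps$ via integration by parts in $\tau$, the dominated-convergence argument for $r_1^\eps$ (and the correct identification that this is where the rate is lost when $\lambda\neq 0$), and the two-step bootstrap in $W$ first, then $L^2$.

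There is, however, a real confusion in your $\eps$-accounting that you should repair. You write the Duhamel formula with $\eps^{-1}$ multiplying \emph{both} the nonlinearity difference and the residual, and then identify as ``the main obstacle'' a Gronwall factor $e^{CT/\eps}$. That obstacle does not exist. The weakly nonlinear scaling in \eqref{eq:genWNLGO} places an explicit factor $\eps$ in front of the nonlinearity, so the error equation reads
\[
i\eps\partial_t w^\eps + \tfrac{\eps^2}{2}\Delta_\eta w^\eps
= \eps\bigl(G(u^\eps,\dots)-G(u_{\rm app}^\eps,\dots)\bigr)
+ \eps\bigl(\lambda r_1^\eps + r_2^\eps + r_3^\eps\bigr),
\]
and after applying $-i\eps^{-1}\int_0^t U^\eps(t-\tau)(\cdot)\,d\tau$ the coefficient in front of $\int_0^t\|w^\eps(\tau)\|_W\,d\tau$ is $\O(1)$, not $\O(\eps^{-1})$. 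Gronwall therefore gives a harmless $e^{CT}$ factor, and the bootstrap closes directly once the integrated source is $o(1)$ (resp.\ $\O(\eps)$). Your non-stationary-phase argument is still needed, but for a different reason than the one you give: $r_2^\eps$ itself is only $\O(1)$ in $W\cap L^2$; it is the \emph{time integral} $R_2^\eps(t)=-i\int_0^t U^\eps(t-\tau)r_2^\eps(\tau)\,d\tau$ that gains the extra $\eps$ by integrating by parts in $\tau$, using that the non-characteristic phase factor has modulus bounded below (which is where $\Phi_0\subset\Z^d$ enters, avoiding small divisors). Your ``resolution'' paragraph does not actually remove the purported $\eps^{-1}$ from the nonlinearity-difference term; once you correct the Duhamel bookkeeping, there is nothing to resolve there.
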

\begin{remark}
  The above result can be proven (in the same way) in cases where the
  initial set of phases is not necessarily supported in $\Z^d$. We
  choose to prove 
  the approximation result in this peculiar framework since it is
  sufficient to infer Theorems~\ref{theo:genNLSinflation} and
  \ref{theo:inflation2}. A more general case would lead
 to small divisors problems, which can be treated as in \cite{CDS10}.
\end{remark}
Let $w^\eps = u^\eps - u_{\rm app}^\eps$ be the error term. 
From Section~\ref{sec:existence}, we know that there exists $T^\eps>0$ 
such that
\begin{equation*}
  w^\eps\in C([0,T^\eps];W\cap L^2).
\end{equation*}
We have to prove that for $\eps$ sufficiently small, $w^\eps\in
C([0,T];W\cap L^2)$, where $T>0$ stems from Lemma~\ref{lem:existprof},
together with \eqref{eq:error}. A standard continuity argument
shows that it suffices to prove \eqref{eq:error}. We compute
\begin{equation*}
  i\eps\d_t w^\eps +\frac{\eps^2}{2}\Delta_\eta w^\eps =
  \eps\(G\(u^\eps,\dots,u^\eps\)- 
G\(u_{\rm app}^\eps,\dots,u_{\rm app}^\eps\)\) 
+\l \eps r_1^\eps+\eps r_2^\eps+\eps r_3^\eps,
\end{equation*}
where
\begin{align*}
G(u_1,\dots,u_{2\nu+1})= 
\l\(K\ast(u_1\overline u_2\dots\overline u_{2\nu})\)u_{2\nu+1} 
+ \mu u_1\overline u_2 \dots u_{2\nu+1},\\
\end{align*}
and the remainder terms are given by
\begin{align*}
& r_1^\eps= 
\sum_{{(\ell_1,\dots,\ell_{2\nu+1})\in I_j} ,
\atop{ \ell_{2\nu+1}\not =j}} 
\Big( K\ast \( a_{\ell_1} \overline a_{\ell_2} \dots 
\overline a_{\ell_{2\nu}} 
e^{i(\phi_{\ell_1}-\phi_{\ell_2}\dots-\phi_{\ell_{2\nu}})/\eps} \) 
a_{\ell_{2\nu+1}} e^{i\phi_{\ell_{2\nu+1}}/\eps} \\
& \qquad\qquad\qquad\qquad\qquad -\widehat K(\k_j-\k_{\ell_{2\nu+1}}) 
a_{\ell_1} \overline a_{\ell_2} \dots a_{\ell_{2\nu+1}} 
e^{i\phi_j/\eps} \Big) , \\
& r_2^\eps=G\(u_{\rm app}^\eps,\dots,u_{\rm app}^\eps\)-
\sum_{j\in J}\sum_{(\ell_1,\dots,\ell_{2\nu+1})\in I_j}
G\( a_{\ell_1} e^{i\phi_{\ell_1}/\eps},\dots, 
a_{\ell_{2\nu+1}} e^{i\phi_{\ell_{2\nu+1}}/\eps} \) ,\\
& r_3^\eps=-\frac{\eps}{2}\sum_{j\in J}
e^{i\phi_j/\eps}\Delta_\eta a_j.
\end{align*}
The term $r_1^\eps$ corresponds to the stationary phase argument
performed formally in Section~\ref{sec:nonloc}, and is proved to be
$o(1)$ in
Section~\ref{sec:localizing}. The term $r_2^\eps$ is more standard,
and corresponds to the error introduced by non-resonant phases. It is
proved to be $\O(\eps)$ in Section~\ref{sec:nonstat} by a
non-stationary phase argument. Finally, the term $r_3^\eps$ corresponds
to the fact that the approximate solution was constructed by
canceling the $\O(1)$ and $\O(\eps)$ terms only in the formal WKB
construction: $r_3^\eps$ corresponds to the remaining $\O(\eps^2)$
terms, and is (rather obviously) of order $\O(\eps)$. 
\subsection{Localizing the non-local oscillations}
\label{sec:localizing}

\begin{lemma}\label{lem:phazstat}
Under the assumptions of Theorem~\ref{theo:wnlgo}, we have
  \begin{equation*}
    \|r_1^\eps\|_{L^\infty([0,T];W\cap L^2)}\Tend \eps 0 0.
  \end{equation*}
\end{lemma}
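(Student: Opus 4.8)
The plan is to reduce the estimate to a single generic term and to make the stationary phase heuristic from Section~\ref{sec:nonloc} rigorous in the Wiener algebra $W$ (and in $L^2$). Since the sum defining $r_1^\eps$ is finite for each $j$ and the outer sum over $j\in J$ converges in $X(\R^d)$ (the profiles lie in $C([0,T];X(\R^d))$, and in fact $X^2(\R^d)$ by Lemma~\ref{lem:morereg} and Remark~\ref{rem:forinflation}), it suffices to bound, uniformly in $t\in[0,T]$, one term of the form
\begin{equation*}
R^\eps := e^{-i\phi_j/\eps}\Big(K\ast\big(b\, e^{i\kappa\cdot x/\eps}\big)\Big) - \widehat K(\kappa)\, b,
\qquad \kappa = \k_j - \k_{\ell_{2\nu+1}}\neq 0,
\end{equation*}
where $b = a_{\ell_1}\overline a_{\ell_2}\cdots\overline a_{\ell_{2\nu}}$ carries only the slow-time dependence (the residual time oscillations factor out and are harmless, being $\eps$-independent phases in $t$), then multiply by the bounded factor $a_{\ell_{2\nu+1}}e^{i(\cdots)/\eps}$ — which costs only $\|a_{\ell_{2\nu+1}}\|_W$ in $W$ and $\|a_{\ell_{2\nu+1}}\|_W$ in $L^2$ by property (2a) — and sum the resulting bounds using the $X$-norm control on the profiles.

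First I would pass to Fourier side. As computed in Section~\ref{sec:nonloc}, using $0$-homogeneity of $\widehat K$,
\begin{equation*}
\widehat{R^\eps}(\xi) = \widehat K(\xi+\kappa)\,\widehat b(\xi) - \widehat K(\kappa)\,\widehat b(\xi)
= \big(\widehat K(\xi+\kappa) - \widehat K(\kappa)\big)\,\widehat b(\xi).
\end{equation*}
So $\|R^\eps\|_W = \|\widehat{R^\eps}\|_{L^1}$ and $\|R^\eps\|_{L^2} = \|\widehat{R^\eps}\|_{L^2}$, and I want to show both tend to $0$. Here the key point is the scaling: after the substitution $\zeta = \eps\xi$ one is really comparing $\widehat K$ near the point $\kappa\neq0$ with its value at $\kappa$, on the ever-shrinking set $|\eps\xi|\ll 1$ — but note $\widehat{R^\eps}(\xi)$ as written above does \emph{not} yet contain $\eps$; the $\eps$ enters because $b$ is a fixed profile while the modulation $e^{i\kappa x/\eps}$ forces us, upon undoing the modulation, to evaluate $\widehat K$ at $\xi + \kappa/\eps$ in the \emph{original} (un-rescaled) variable. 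Let me restate cleanly: before factoring the modulation, $\mathcal F\big(K\ast(be^{i\kappa x/\eps})\big)(\xi) = \widehat K(\xi)\,\widehat b(\xi - \kappa/\eps)$, so after factoring, $\widehat{R^\eps}(\xi) = \big(\widehat K(\xi + \kappa/\eps) - \widehat K(\kappa/\eps)\big)\widehat b(\xi) = \big(\widehat K(\eps\xi + \kappa) - \widehat K(\kappa)\big)\widehat b(\xi)$ by $0$-homogeneity. This is the correct expression.

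Now I would split $\R^d$ into $\{|\xi|\le \delta/\eps\}$ and its complement, for a small fixed $\delta>0$ chosen so that $\widehat K$ is uniformly continuous on the compact annulus $\{|\kappa|/2 \le |\eta| \le 2|\kappa|\}$. On the inner region, $|\eps\xi|\le\delta$, so $\widehat K(\eps\xi+\kappa)-\widehat K(\kappa)\to 0$ uniformly as $\eps\to0$ (continuity of $\widehat K$ away from the origin, Assumption~\ref{ass:E}); multiplying by $\widehat b\in L^1\cap L^2$ and integrating gives a contribution $\to 0$ in both $L^1$ and $L^2$ by dominated convergence (dominant: $2\|\widehat K\|_{L^\infty}|\widehat b|$). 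On the outer region $\{|\xi|>\delta/\eps\}$, the factor $\widehat K(\eps\xi+\kappa)-\widehat K(\kappa)$ is merely bounded by $2\|\widehat K\|_{L^\infty}$, but $\|\widehat b\|_{L^1(\{|\xi|>\delta/\eps\})}\to0$ and $\|\widehat b\|_{L^2(\{|\xi|>\delta/\eps\})}\to0$ as $\eps\to0$ since $\widehat b\in L^1\cap L^2$; here is where the $X^2$-regularity of the profiles matters, giving uniform-in-$t$ tail decay of $\widehat b$ (indeed $b$ is a finite product of $X^2$-functions, hence its Fourier transform has an integrable tail bound uniform on $[0,T]$). Summing the two regions gives $\|R^\eps\|_{W\cap L^2}\to 0$ uniformly in $t\in[0,T]$. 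The main obstacle is precisely the outer region together with the need for uniformity in $t$ and summability in $j$: one must verify that the tail estimates on $\widehat b$ are uniform over $t\in[0,T]$ and that, after multiplying by $\|a_{\ell_{2\nu+1}}\|_W$ and summing over the (at most countable) resonant index tuples, the bound remains finite — this is where $(a_j)_{j\in J}\in C([0,T];X^2(\R^d))$ is used, via $\sum_{j}\|a_j\|_{X^2}<\infty$ and the algebra property, to dominate the whole series by a single $\eps$-independent convergent sum times a quantity tending to $0$.
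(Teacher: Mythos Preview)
Your approach is essentially the same as the paper's: reduce to a single term, pass to the Fourier side, use $0$-homogeneity of $\widehat K$ to obtain $(\widehat K(\eps\xi+\kappa)-\widehat K(\kappa))\widehat b(\xi)$, and conclude by dominated convergence. The paper keeps the factor $a_{\ell_{2\nu+1}}$ inside and bounds the resulting convolution by Young's inequality, while you factor it out first using the algebra property of $W$; these are equivalent.

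One comment: your inner/outer splitting and the appeal to $X^2$-regularity for tail decay are unnecessary. Since for every fixed $\xi$ we have $\widehat K(\eps\xi+\kappa)\to\widehat K(\kappa)$ (continuity of $\widehat K$ at $\kappa\neq0$), and $|\widehat K(\eps\xi+\kappa)-\widehat K(\kappa)|\,|\widehat b(\xi)|\le 2\|\widehat K\|_{L^\infty}|\widehat b(\xi)|\in L^1\cap L^2$, dominated convergence on all of $\R^d$ gives the result directly; this is exactly what the paper does. Your statement that $\widehat K(\eps\xi+\kappa)-\widehat K(\kappa)\to 0$ \emph{uniformly} on the inner region $\{|\xi|\le\delta/\eps\}$ is not literally true (that region grows with $1/\eps$), though the DCT argument you invoke afterward is correct and does not need it. Only $X$-regularity of the profiles (so that $\widehat b\in L^1\cap L^2$) is required for this lemma; the $X^2$ hypothesis is used for $r_2^\eps$ and $r_3^\eps$, not here.
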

\begin{proof}
Let $j\in J$, $(\ell_1,\dots,\ell_{2\nu+1})\in I_j$,  
with $\kappa_{\ell_{2\nu+1}}\not=\k_j$. Denote 
\begin{align*}
A &= a_{\ell_1}\overline a_{\ell_2}\dots\overline a_{\ell_{2\nu}} , 
\quad a = a_{\ell_{2\nu+1}} , \quad \k = \k_{\ell_{2\nu+1}} , \\
b^\eps(x) &=E\(A(x) e^{i(\kappa_j-\kappa)\cdot x/\eps}\) 
a(x)e^{i\kappa\cdot x/\eps},\\
b^\eps_{\rm app}(x)&=\widehat K\(\kappa_j-\kappa\) 
A(x) a(x) e^{i\k_j\cdot x/\eps}, 
\end{align*}
where we have dropped the dependence upon 
$j,\ell_1,\dots,\ell_{2\nu+1}$ and $t$. Then
\begin{equation*}
r_1^\eps = \sum_{j\in J}\sum_{{(\ell_1,\dots,\ell_{2\nu+1})\in I_j},
\atop{ \ell_{2\nu+1}\not =j}} \(b^\eps -b^\eps_{\rm app}\) 
e^{it\partial_t\phi_j/\eps}, 
\end{equation*}
where the notation $t\partial_t\phi_j$ is there only to avoid a
discussion on the $\eta$'s.
We need to estimate $r_1^\eps$ in $W=\F(L^1)$ and $L^2=\F(L^2)$, so we compute:
\begin{align*}
  \widehat b^\eps(\xi) &= 
\F\( E\(A e^{i(\kappa_j-\kappa)\cdot x/\eps}\) 
a e^{i\kappa\cdot x/\eps} \) \( \xi\) \\
&= (2\pi)^{-d/2} \( \F\( E\(A e^{i(\kappa_j-\kappa)\cdot x/\eps}\) \) 
\ast \F\(a e^{i\kappa\cdot x/\eps} \) \) \( \xi\)\\
& =(2\pi)^{-d/2} \int \widehat K(\zeta) 
\widehat A \(\zeta-\frac{\kappa_j-\kappa}{\eps}\)
\widehat a\(\xi-\zeta-\frac{\kappa}{\eps}\)d\zeta\\
&= (2\pi)^{-d/2} \int \widehat K\(\zeta+\frac{\k_j-\k}{\eps}\)
\widehat A \(\zeta \)
\widehat a\(\xi-\zeta-\frac{\kappa_j}{\eps}\)d\zeta.
\end{align*}
On the other hand,
\begin{equation*}
 \widehat b^\eps_{\rm app}(\xi) =
 (2\pi)^{-d/2} \int \widehat K\(\k_j-\k\) \widehat A \(\zeta \)
\widehat a\(\xi-\zeta-\frac{\k_j}{\eps}\)d\zeta.
\end{equation*}
Since $\widehat K$ is homogeneous of degree zero, we infer
\begin{align*}
\widehat b^\eps(\xi) \, - &\,  \widehat b^\eps_{\rm app}(\xi) = \\
& (2\pi)^{-d/2} \int 
\( \widehat K\(\k_j-\k+\eps\zeta\)-\widehat K\(\k_j-\k\)\) 
\widehat A \(\zeta \)
\widehat a\(\xi-\zeta-\frac{\k_j}{\eps}\)d\zeta.
\end{align*}
Therefore,
\begin{align*}
\|r_1^\eps\|_{W\cap L^2}\le 
\sum_{j\in J}\sum_{{(\ell_1,\dots,\ell_{2\nu+1})\in I_j},
\atop{ \ell_{2\nu+1}\not =j}} \int_{\R^2}
& \left\lvert 
\widehat K \(\k_j-\k_{\ell_{2\nu+1}}+\eps\zeta\)- 
\widehat K \(\k_j-\k_{\ell_{2\nu+1}}\) 
\right\rvert \\
& \quad \left\lvert
\F\( a_{\ell_1}\overline a_{\ell_2}\dots\overline a_{\ell_{2\nu}} \) \(\zeta \)
\right\rvert
\left\lVert a_{\ell_{2\nu+1}} \right\rVert_{W\cap L^2}
d\zeta.
\end{align*}
To conclude, we note that $\widehat K\in L^\infty(\R^2)$, and
$\widehat K$ is continuous at $\k_j-\k_{\ell_{2\nu+1}}\not =0$. 
We can then conclude by the Dominated Convergence Theorem. 
\end{proof}
\begin{remark}
  The proof shows that, in general, we cannot expect a rate in our
  asymptotic error estimate. For 
  instance, for the non-local interaction in (DS), if $\k=(p,0)$, $p\not =0$,
  \begin{equation*}
  \widehat  K\(\k+\zeta\eps\)- \widehat K\(\k\)  =
  \frac{-\eps^2\zeta_2^2 }{(p+\eps \zeta_1)^2 +\eps^2\zeta_2^2 }.
  \end{equation*}
There is no \emph{uniform} control  (in $\zeta$) other than
\begin{equation*}
 \left\lvert  \widehat  K\(\k+\zeta\eps\)- \widehat
   K\(\k\)\right\rvert \le 1. 
\end{equation*}
\end{remark}
\subsection{Filtering the non-characteristic oscillations}
\label{sec:nonstat} 
Nonlinear interactions not only produce resonances, but also other
non-characteristic high frequency oscillations. The latter have to be
filtered via a non-stationary phase type argument. 
This becomes clear on the integral formulation for
\begin{equation*}
  i\eps\d_t w^\eps +\frac{\eps^2}{2}\Delta_\eta w^\eps = F^\eps,
\end{equation*}
which reads:
\begin{equation*}
  w^\eps(t,x)=U^\eps(t)w^\eps(0,x)-
  i\eps^{-1}\int_0^tU^\eps(t-\tau)F^\eps(\tau,x)d\tau. 
\end{equation*}
The main result of this paragraph is:
\begin{proposition}\label{lem:estR2}
  Let $(\alpha_j)_{j\in J}\in X^2(\R^d)$. Denote
  \begin{equation*}
    R_2^\eps(t,x) = -i \int_0^tU^\eps(t-\tau)r_2^\eps(\tau,x)d\tau.
  \end{equation*}
There exists $C>0$ such that
  for all $\eps\in ]0,1]$,
  \begin{equation*}
    \sup_{t\in [0,T]}\|R_2^\eps(t)\|_{W\cap L^2(\R^d)}\le C\eps. 
  \end{equation*}
\end{proposition}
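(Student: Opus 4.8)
The plan is to run a non-stationary phase argument after conjugating by the free group. Since $U^\eps(t)$ is unitary on both $W$ and $L^2$, one has $\bigl\|R_2^\eps(t)\bigr\|_{W\cap L^2}=\bigl\|\int_0^t U^\eps(-\tau)\,r_2^\eps(\tau)\,d\tau\bigr\|_{W\cap L^2}$, so it suffices to bound the latter by $C\eps$, uniformly in $t\in[0,T]$ and $\eps\in\,]0,1]$. Recall that $r_2^\eps$ is the sum, over those tuples $L=(\ell_1,\dots,\ell_{2\nu+1})\in J^{2\nu+1}$ that lie in \emph{no} $I_j$, of $G\bigl(a_{\ell_1}e^{i\phi_{\ell_1}/\eps},\dots,a_{\ell_{2\nu+1}}e^{i\phi_{\ell_{2\nu+1}}/\eps}\bigr)$. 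First I would rewrite each such term in the form $c^\eps_L(\tau,x)\,e^{i\psi_L(\tau,x)/\eps}$, with $\psi_L=\phi_{\ell_1}-\phi_{\ell_2}+\dots+\phi_{\ell_{2\nu+1}}$ and $c^\eps_L$ bounded in $W\cap L^2$, together with its first two $x$-derivatives and its $\tau$-derivative, uniformly in $\eps$. The $\mu$-part of $G$ contributes $\mu\,a_{\ell_1}\overline a_{\ell_2}\cdots a_{\ell_{2\nu+1}}$, which is immediate from Lemma~\ref{lem:morereg} (the profiles and their derivatives lie in the Wiener-type space $X^2$, which is stable under products). For the $\lambda$-part one uses in an essential way that $\widehat K$ is homogeneous of degree zero (Assumption~\ref{ass:E}): with $\sigma_L=\k_{\ell_1}-\k_{\ell_2}+\dots-\k_{\ell_{2\nu}}$ and $A_L=a_{\ell_1}\overline a_{\ell_2}\cdots\overline a_{\ell_{2\nu}}$, the inner oscillation factors out of the convolution, $K\ast\bigl(A_L\,e^{i\sigma_L\cdot x/\eps}\bigr)=e^{i\sigma_L\cdot x/\eps}\,B^\eps_L$ with $\widehat{B^\eps_L}(\zeta)=(2\pi)^{d/2}\widehat K(\sigma_L+\eps\zeta)\,\widehat{A_L}(\zeta)$, so that $\bigl\|\d^\beta_x B^\eps_L\bigr\|_{W\cap L^2}\le(2\pi)^{d/2}\|\widehat K\|_{L^\infty}\bigl\|\d^\beta_x A_L\bigr\|_{W\cap L^2}$ for $|\beta|\le 2$, uniformly in $\eps$; multiplying by $a_{\ell_{2\nu+1}}$ then produces the $\lambda$-contribution to $c^\eps_L$. (Note the contrast with Lemma~\ref{lem:phazstat}, where $\sigma_L$ is frozen at the fixed nonzero value $\k_j-\k_{\ell_{2\nu+1}}$ and one invokes instead continuity of $\widehat K$ away from the origin.)

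Next I would conjugate each term by the free flow. Writing $\Omega_\eta(\k):=\sum_{\ell=1}^d\eta_\ell\k_\ell^2$, so that $\phi_j(t,x)=\k_j\cdot x-\tfrac t2\Omega_\eta(\k_j)$, and using the elementary identity $U^\eps(-\tau)\bigl[g\,e^{i\k\cdot x/\eps}\bigr]=e^{i\tau\Omega_\eta(\k)/(2\eps)}\,e^{i\k\cdot x/\eps}\,\bigl(U^\eps(-\tau)g\bigr)(\cdot+\tau\,\eta\!\circ\!\k)$ (where $\eta\!\circ\!\k=(\eta_1\k_1,\dots,\eta_d\k_d)$), one gets $U^\eps(-\tau)\bigl[c^\eps_L(\tau,\cdot)e^{i\psi_L(\tau,\cdot)/\eps}\bigr]=e^{i\tau\theta_L/\eps}\,h^\eps_L(\tau)$, where $\k^L=\k_{\ell_1}-\k_{\ell_2}+\dots+\k_{\ell_{2\nu+1}}$, $\theta_L=\tfrac12\bigl(\Omega_\eta(\k^L)-\sum_k(-1)^{k+1}\Omega_\eta(\k_{\ell_k})\bigr)$, and $h^\eps_L(\tau,x)=e^{i\k^L\cdot x/\eps}\bigl(U^\eps(-\tau)c^\eps_L(\tau,\cdot)\bigr)(x+\tau\,\eta\!\circ\!\k^L)$. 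By construction $\theta_L=0$ exactly when $\psi_L$ is a characteristic phase, i.e. exactly when $L$ is resonant (belongs to some $I_j$); hence $\theta_L\neq0$ here. Since $U^\eps(-\tau)$, translations, and multiplication by $e^{i\k^L\cdot x/\eps}$ are isometries of $W$ and of $L^2$, and since $\d_\tau\bigl(U^\eps(-\tau)c^\eps_L(\tau)\bigr)=U^\eps(-\tau)\bigl(-\tfrac{i\eps}2\Delta_\eta c^\eps_L(\tau)+\d_\tau c^\eps_L(\tau)\bigr)$, the uniform bounds on $c^\eps_L$ from the first step give $\sup_{\tau\in[0,T]}\|h^\eps_L(\tau)\|_{W\cap L^2}\le C_L$ and $\sup_{\tau\in[0,T]}\|\d_\tau h^\eps_L(\tau)\|_{W\cap L^2}\le C_L'$ with $C_L,C_L'$ independent of $\eps$ (the bound on $\d_\tau h^\eps_L$ also involving $|\k^L|\,\|\nabla c^\eps_L\|_{W\cap L^2}$, coming from differentiating the transported argument $x+\tau\,\eta\!\circ\!\k^L$). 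An integration by parts in $\tau$ then yields $\bigl\|\int_0^t e^{i\tau\theta_L/\eps}h^\eps_L(\tau)\,d\tau\bigr\|_{W\cap L^2}\le\dfrac{\eps}{|\theta_L|}\Bigl(2\sup_\tau\|h^\eps_L\|_{W\cap L^2}+T\sup_\tau\|\d_\tau h^\eps_L\|_{W\cap L^2}\Bigr)$.

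It remains to sum over the non-resonant tuples $L$, and this is the only genuinely delicate point. Because $\Phi_0\subset\Z^d$, the whole set $\Phi$ lies in $\Z^d$, so $\k^L\in\Z^d$ and $\Omega_\eta(\k^L),\ \sum_k(-1)^{k+1}\Omega_\eta(\k_{\ell_k})\in\Z$; hence $\theta_L\in\tfrac12\Z$, and $\theta_L\neq0$ forces $|\theta_L|\ge\tfrac12$. This is precisely why the integrality assumption is imposed: it removes any small-divisor obstruction, so that $\sum_L\frac{\eps}{|\theta_L|}(\cdots)\le2\eps\sum_L(\cdots)$. What is then left is to check that $\sum_L\bigl(\sup_\tau\|h^\eps_L\|_{W\cap L^2}+\sup_\tau\|\d_\tau h^\eps_L\|_{W\cap L^2}\bigr)<\infty$, uniformly in $\eps$ (which also justifies that $r_2^\eps$ is a convergent series in $C([0,T];W\cap L^2)$ and that one may split termwise); this follows from the $(2\nu+1)$-multilinear structure together with the $X^2$-bounds on the profiles — which control the relevant weighted $\ell^1$-sums of the $a_j$ and their derivatives, the factor $\|\widehat K\|_{L^\infty}$ absorbing the non-local part — exactly as the convergence of the nonlinear series is handled in the proof of Lemma~\ref{lem:existprof} and in \cite{CDS10}. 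Collecting these estimates gives $\sup_{t\in[0,T]}\|R_2^\eps(t)\|_{W\cap L^2}\le C\eps$. The main obstacle is thus not any single estimate but the bookkeeping that makes the infinite sum over non-resonant interactions converge uniformly in $\eps$: it rests simultaneously on the quantized lower bound $|\theta_L|\ge\tfrac12$ coming from $\Phi\subset\Z^d$ and on the Wiener-algebra ($X^2$) control of the profiles, the zero-homogeneity of $\widehat K$ being precisely what lets the non-local terms be folded into the same scheme.
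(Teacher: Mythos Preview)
Your proposal is correct and follows essentially the same route as the paper: reduce $r_2^\eps$ to a sum over non-resonant tuples, integrate by parts in $\tau$ against the oscillation $e^{i\tau\theta_L/\eps}$, use $\Phi\subset\Z^d$ to get $|\theta_L|\ge\tfrac12$, and sum via the $X^2$ control of the profiles. The only cosmetic difference is packaging: the paper isolates a single-term Fourier-side lemma (Lemma~\ref{lem:duhamelR}), computing $\widehat{D^\eps}$ directly and splitting the phase as $\theta_1+\theta_2$, whereas you first pull the oscillation out of $E$ via homogeneity and then use the plane-wave/propagator commutation in physical space---two equivalent realizations of the same non-stationary phase argument.
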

To prove this result, we first reduce the analysis to the case of a
single oscillation. Decompose $r_2^\eps$ as
\begin{align*}
r_2^\eps &= G\(\sum_{\ell_1\in J}a_{\ell_1} e^{i\phi_{\ell_1}/\eps},
\dots , \sum_{\ell_{2\nu+1}\in J}
a_{\ell_{2\nu+1}} e^{i\phi_{\ell_{2\nu+1}}/\eps}\) \\
&\quad
- \sum_{j\in J}\sum_{(\ell_1,\dots,\ell_{2\nu+1})\in I_j}
G\( a_{\ell_1} e^{i\phi_{\ell_1}/\eps}, \dots,
a_{\ell_{2\nu+1}} e^{i\phi_{\ell_{2\nu+1}}/\eps}\) \\
&= \sum_{\ell_1,\dots,\ell_{2\nu+1}\in J}
G\( a_{\ell_1} e^{i\phi_{\ell_1}/\eps}, \dots ,
a_{\ell_{2\nu+1}} e^{i\phi_{\ell_{2\nu+1}}/\eps} \) \\
&\quad
- \sum_{j\in J}\sum_{(\ell_1,\dots,\ell_{2\nu+1})\in I_j}
G\( a_{\ell_1} e^{i\phi_{\ell_1}/\eps}, \dots,
a_{\ell_{2\nu+1}} e^{i\phi_{\ell_{2\nu+1}}/\eps} \) \\
&=\sum_{(\ell_1,\dots,\ell_{2\nu+1})\in N}
G\( a_{\ell_1} e^{i\phi_{\ell_1}/\eps}, \dots,
a_{\ell_{2\nu+1}} e^{i\phi_{\ell_{2\nu+1}}/\eps} \) ,
\end{align*}
where 
\begin{equation*}
  N= J^{2\nu+1}\setminus \bigcup_{j\in J}I_j
\end{equation*}
is the \emph{non-resonant set}. Write
\begin{align*}
G & \( a_{\ell_1} e^{i\phi_{\ell_1}/\eps}, \dots,
a_{\ell_{2\nu+1}} e^{i\phi_{\ell_{2\nu+1}}/\eps} \)= \\
& \l E \( a_{\ell_1} \overline a_{\ell_2} \dots 
\overline a_{\ell_{2\nu}}  
e^{i(\phi_{\ell_1}-\phi_{\ell_2}\dots-\phi_{\ell_{2\nu}})/\eps} \)
a_{\ell_{2\nu+1}} e^{i\phi_{\ell_{2\nu+1}}/\eps} \\ 
& + \mu a_{\ell_1}\dots a_{\ell_{2\nu+1}}
e^{i(\phi_{\ell_1}-\phi_{\ell_2}\dots+\phi_{\ell_{2\nu+1}})/\eps},
\end{align*}
and separate the temporal and spatial oscillations. 
The non-local term reads  
\begin{align*}
\exp \( -i\sum_{m=1}^d \eta_m \( |\k_{\ell_1,m}|^2-|\k_{\ell_2,m}|^2
\dots-|\k_{\ell_{2\nu},m}|^2 \) t/(2\eps) \) \times & \\ \times
E \( a_{\ell_1} \overline a_{\ell_2}\dots\overline a_{\ell_{2\nu}} 
e^{i(\k_{\ell_1}-\k_{\ell_2}\dots-\k_{\ell_{2\nu}})\cdot x/\eps} \) 
& a_{\ell_{2\nu+1}} e^{i\k_{\ell_{2\nu+1}}\cdot x/\eps},
\end{align*}
with, since $(\ell_1,\dots,\ell_{2\nu+1})\in N$,
\begin{equation*}
\sum_{m=1}^d \eta_m \( |\k_{\ell_1,m}|^2-|\k_{\ell_2,m}|^2
\dots-|\k_{\ell_{2\nu},m}|^2 \) \not =
\sum_{m=1}^d \eta_m |\k_{\ell_{2\nu+1},m}|^2 . 
\end{equation*}
We see that the following lemma is the key:
\begin{lemma}\label{lem:duhamelR}
 Let $T>0$, $\om\in\R$, $\k_1,\k_2\in\R^d$, and $b_1,b_2\in L^\infty([0,T];W\cap
 L^2(\R^d))$. Denote 
  \begin{equation*}
   D^\eps(t,x):= \int_0^t U^\eps(t-\tau)\(
   E\(b_1(\tau,x)e^{i\k_1\cdot x/\eps}\) b_2(\tau,x)e^{i\k_2\cdot
     x/\eps}   e^{i\om
      \tau/(2\eps)}\)d\tau .
  \end{equation*}
Let $\k=\k_1+\k_2$.  Assume $\om \not =|\k|^2$, and $\d_t b_j
,\Delta b_j \in 
L^\infty([0,T];W\cap 
 L^2(\R^d))$, $j=1,2$. Then
  \begin{align*}
    \lVert D^\eps\rVert_{X_T}\le
     \frac{C\eps}{\left\lvert |\k|^2-\om\right\rvert} \Big( & \<\k\>^2 
\left\lVert
      b_1b_2\right\rVert_{X_T}+ \left\lVert
      b_1\Delta b_2\right\rVert_{X_T}  
+ \left\lVert b_2\Delta b_1\right\rVert_{X_T}
+ \left\lVert \nabla b_1\nabla b_2\right\rVert_{X_T}\\
& +
\left\lVert b_1\d_t b_2\right\rVert_{X_T}+
\left\lVert b_2\d_t b_1\right\rVert_{X_T}\Big),
  \end{align*} 
where $\|f\|_{X_T}:= \|f\|_{L^\infty([0,T];W\cap L^2)}$, and 
 $C$ is independent of $\k_j$, $\om$ and $b_j$. 
\end{lemma}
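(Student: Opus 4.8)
The plan is to exploit the oscillatory factor $e^{i\omega\tau/(2\eps)}e^{i\k\cdot x/\eps}$ that appears after collecting all oscillations, and to integrate by parts in time (a non-stationary phase / Duhamel argument) using the non-resonance hypothesis $\omega\neq|\k|^2$. First I would write $U^\eps(t-\tau)=U^\eps(t)U^\eps(-\tau)$ and note that when acting on the pure oscillation $e^{i\k\cdot x/\eps}$ the free group produces a scalar time-phase: schematically $U^\eps(-\tau)\(g(x)e^{i\k\cdot x/\eps}\)=e^{-i\tau|\k|^2/(2\eps)}U^\eps(-\tau)_{\k}g(x)e^{i\k\cdot x/\eps}$, where $U^\eps(-\tau)_\k$ is the translated (in frequency) Schr\"odinger group. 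Hence the integrand carries the scalar factor $e^{i\tau(\omega-|\k|^2)/(2\eps)}$, whose $\tau$-primitive is $\frac{2\eps}{i(\omega-|\k|^2)}e^{i\tau(\omega-|\k|^2)/(2\eps)}$; this is precisely where the denominator $\bigl||\k|^2-\omega\bigr|$ and the gain of one power of $\eps$ come from. Integrating by parts in $\tau$ moves the $\d_\tau$ onto the remaining (slowly varying) part of the integrand, namely $U^\eps(-\tau)_\k\(E(b_1e^{i\k_1\cdot x/\eps})b_2e^{i\k_2\cdot x/\eps}\)$ stripped of its scalar phase, producing boundary terms at $\tau=0,t$ and an interior term.

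Next I would estimate each resulting piece in $X_T=L^\infty([0,T];W\cap L^2)$. The boundary terms are of the form $\frac{2\eps}{i(\omega-|\k|^2)}U^\eps(t-\tau)\(E(b_1e^{i\k_1\cdot x/\eps})b_2e^{i\k_2\cdot x/\eps}\)$ evaluated at the endpoints; since $U^\eps(t)$ is unitary on both $W$ and $L^2$ (properties (3) and (3a) of Section~\ref{sec:frame}), since $E$ is bounded on $W$ and on $L^2$ (using $\widehat K\in L^\infty$ together with properties (2) and (2a)), and since multiplication by the unimodular $e^{i\k_j\cdot x/\eps}$ does not change the $W$ or $L^2$ norm, these are bounded by $\frac{C\eps}{||\k|^2-\omega|}\|b_1b_2\|_{X_T}$ — more precisely, one keeps $b_2$ and $b_1$ paired so that the product $b_1 b_2$ (or rather $b_1\overline b_2$-type products after unwinding $E$) appears; writing the bound in terms of $\|b_1b_2\|_{X_T}$ absorbs the factor $\<\k\>^0$, which is dominated by the $\<\k\>^2\|b_1b_2\|_{X_T}$ term already present on the right-hand side. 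The interior term is $\frac{2\eps}{i(\omega-|\k|^2)}\int_0^t U^\eps(t-\tau)\d_\tau\Bigl[U^\eps(-\tau)_\k\bigl(\text{stuff}\bigr)\Bigr]\,d\tau$; the $\tau$-derivative hits either the group $U^\eps(-\tau)_\k=\exp(i\eps\tfrac{\tau}{2}\Delta_\eta^{(\k)})$, bringing down a factor $i\eps\Delta_\eta^{(\k)}$ whose symbol is $O(\<\k\>^2)$ on the oscillatory part and $O(\Delta b_j)+O(\nabla b_j)+O(\<\k\>\nabla b_j)$ on the profiles (hence the $\<\k\>^2\|b_1b_2\|_{X_T}$, $\|b_1\Delta b_2\|_{X_T}$, $\|b_2\Delta b_1\|_{X_T}$, $\|\nabla b_1\nabla b_2\|_{X_T}$ terms after Leibniz), or hits the profiles $b_1,b_2$ directly, producing $\|b_1\d_t b_2\|_{X_T}$ and $\|b_2\d_t b_1\|_{X_T}$. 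Throughout, the extra factor $\eps$ from the time integration cancels the $\eps$ lost when $\d_\tau$ differentiates $U^\eps(-\tau)_\k$, so no negative power of $\eps$ survives, and the only $\eps$ left is the prefactor $\eps$ claimed.

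The key technical observation making the algebra clean is that the free group acting on a product of a profile and an exact plane wave can be conjugated: $U^\eps(t)\(g e^{i\k\cdot x/\eps}\)=e^{i\k\cdot x/\eps}e^{-it|\k|^2/(2\eps)}\widetilde U^\eps_\k(t)g$, where $\widetilde U^\eps_\k(t)=\exp\bigl(i\eps\tfrac t2\Delta_\eta+t\k\cdot\nabla_\eta\bigr)$ (the transport-plus-semiclassical-Schr\"odinger group), which is again unitary on $W$ and on $L^2$. This is what decouples the scalar oscillation $e^{i\tau(\omega-|\k|^2)/(2\eps)}$ from the vector-valued amplitude and legitimizes the integration by parts. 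The main obstacle I anticipate is bookkeeping rather than conceptual: one must track carefully that after applying $E$ to $b_1e^{i\k_1\cdot x/\eps}$ the spatial oscillation of the argument is exactly $e^{i\k_1\cdot x/\eps}$ (because $\widehat K$ is a Fourier multiplier, so $E(ge^{i\k_1\cdot x/\eps})=e^{i\k_1\cdot x/\eps}E_{\k_1/\eps}g$ where $E_{\k_1/\eps}$ is $E$ with frequency shifted by $\k_1/\eps$), that $E_{\k_1/\eps}$ is still bounded on $W\cap L^2$ with norm $\le\|\widehat K\|_{L^\infty}$ uniformly in $\eps$, and that all commutators of $E_{\k_1/\eps}$ with derivatives behave well — but since $\widehat K\in L^\infty$ this boundedness is uniform and causes no loss. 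Once this is in place, summing the six estimated contributions gives exactly the stated bound, with $C$ depending only on $\|\widehat K\|_{L^\infty}$ and the unitarity constants, hence independent of $\k_j$, $\omega$, and $b_j$.
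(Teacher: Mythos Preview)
Your proposal is correct and follows the same strategy as the paper: integrate by parts in $\tau$ against the non-resonant scalar oscillation isolated after conjugating out the plane wave $e^{i\k\cdot x/\eps}$. The paper performs this computation directly on the Fourier side --- writing the total phase as $\theta=\theta_1+\theta_2$ with $\theta_2=(|\k|^2-\omega)/\eps$, integrating $e^{i\tau\theta_2/2}$, and collecting the resulting $\theta_1=\eps|\eta|^2+2\k\cdot\eta$ and $\d_\tau\widehat g^\eps$ contributions --- which is precisely your conjugated-group argument expressed in the frequency variable (the generator of your $\widetilde U^\eps_\k$ has Fourier symbol $i\theta_1/2$, and the bound $2|\k\cdot\eta|\le|\k|^2+|\eta|^2$ is what produces the $\<\k\>^2\|b_1b_2\|_{X_T}$ term together with the Laplacian terms).
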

\begin{proof}
Let 
\begin{equation*}
  f^\eps(t,x) =E\(b_1(t,x)e^{i\k_1\cdot x/\eps}\) b_2(t,x)e^{i\k_2\cdot
     x/\eps}   .
\end{equation*}
We compute, like in the proof of Lemma~\ref{lem:phazstat},
\begin{equation*}
  \widehat f^\eps(t,\xi) = (2\pi)^{-d/2}\(\(\widehat K\(\cdot
  +\frac{\k_1}{\eps}\) \widehat b_1(t,\cdot)\)\ast \widehat
  b_2(t,\cdot)\)\(\xi-\frac{\k}{\eps}\)=\widehat
  g^\eps\(t,\xi-\frac{\k}{\eps}\) , 
\end{equation*}
where
\begin{equation*}
  g^\eps= E^\eps (b_1)b_2,\quad\text{and}\quad
 \widehat{E^\eps(b)}(\xi)=\widehat K\(\xi
  +\frac{\k_1}{\eps}\) \widehat b(\xi) .
\end{equation*}
  By the definition of $U^\eps(t)$, we have
  \begin{equation*}
    \widehat D^\eps(t,\xi)= \int_0^t e^{-i\eps(t-\tau)|\xi|^2/2}
    \, \widehat g^\eps\(t,\xi-\frac{\k}{\eps}\)e^{-i\om
      \tau/(2\eps)}d\tau. 
  \end{equation*}
Setting $\eta = \xi -\k/\eps$, 
we have
 \begin{align*}
    \widehat D^\eps(t,\xi)&= e^{-i\eps t|\eta+\k/\eps|^2/2}\int_0^t
    e^{i\eps\tau|\eta+\k/\eps|^2/2} \,
    \widehat g^\eps\(\tau,\eta\)e^{-i\om
      \tau/(2\eps)}d\tau\\
&=e^{-i\eps t|\eta+\k/\eps|^2/2}\int_0^t
    e^{i\tau\theta/2} \,
    \widehat g^\eps\(\tau,\eta\)d\tau ,
  \end{align*}
where we have denoted 
\begin{equation*}
  \theta = \eps \left\lvert \eta+\frac{\k}{\eps}\right\rvert
  -\frac{\om}{\eps} = \underbrace{\eps|\eta|^2 +2\k\cdot
    \eta}_{\theta_1} +\underbrace{\frac{|\k|^2-\om}{\eps}}_{\theta_2}. 
\end{equation*}
Integrate by parts, by first integrating $e^{i\tau\theta_2/2}$: 
\begin{equation*}
  \widehat D^\eps(t,\xi) = -\frac{2i}{\theta_2} e^{i\tau\theta/2} 
    \widehat g^\eps\(\tau,\eta\)\Big|_0^t +\frac{2i}{\theta_2}\int_0^t
    e^{i\tau\theta/2} \( i\frac{\theta_1}{2}\widehat g^\eps \(\tau,\eta\)+
 {\d_t \widehat g^\eps}\(\tau,\eta\)\)d\tau.
\end{equation*}
The lemma follows, since $\widehat K\in L^\infty(\R^d)$.
\end{proof}
In view of Lemma~\ref{lem:morereg}, Proposition~\ref{lem:estR2} follows by
summation in Lemma~\ref{lem:duhamelR}. 

\begin{remark} Lemma \ref{lem:duhamelR} remains true if $E$ is
  replaced by the identity operator. In this 
  case, we simply extend \cite[Lemma~5.7]{CDS10} from the $W$ setting
  to the $W\cap L^2$ setting, an extension which requires absolutely
  no novelty. 
\end{remark}

\subsection{Proof of Theorem~\ref{theo:wnlgo}}\label{sec:proofWNLGO}

 Lemma~\ref{lem:morereg} shows that under the assumptions of
 Theorem~\ref{theo:wnlgo}, we also have
 \begin{equation}
   \label{eq:r3small}
   \sup_{t\in [0,T]}\|r^\eps_3(t)\|_{W\cap L^2(\R^d)} \lesssim \eps. 
 \end{equation}
Duhamel's formula for the error term $w^\eps=u^\eps-u^\eps_{\rm app}$
reads
\begin{align*}
  w^\eps(t)& = -i \int_0^t U^\eps(t-\tau)\(G(u^\eps,\dots,u^\eps)-
  G(u^\eps_{\rm app},\dots,u^\eps_{\rm app})\)(\tau)d\tau\\
&\quad -i\int_0^t U^\eps(t-\tau)\(\l r_1^\eps+r_2^\eps
+r_3^\eps\)(\tau)d\tau.  
\end{align*}
We then proceed in two steps:
\begin{enumerate}
\item Prove that $w^\eps$ is small (as in Theorem~\ref{theo:wnlgo}) in
  $W$, by a semilinear analysis.
\item Infer that $w^\eps$ is small in $L^2(\R^d)$, by a ``linear''
  analysis. 
\end{enumerate}
We note the point-wise identity
\begin{equation}\label{eq:Gpoint}
\begin{aligned}
G(u^\eps,\dots,u^\eps)-
G(u^\eps_{\rm app},\dots,u^\eps_{\rm app})
& = \(\l K\ast|u^\eps|^{2\nu} +\mu |u^\eps|^{2\nu}\) w^\eps \\
+ \(\l K\ast\(|u^\eps|^{2\nu}- |u^\eps_{\rm app}|^{2\nu}\) \)
& u^\eps_{\rm app}+\mu \(|u^\eps|^{2\nu}-
|u^\eps_{\rm app}|^{2\nu}\)u^\eps_{\rm app}. 
\end{aligned}
\end{equation}
Since $\widehat K \in L^\infty$, we infer
\begin{align*}
\|G(u^\eps,\dots,u^\eps)-
G(u^\eps_{\rm app},\dots,u^\eps_{\rm app})\|_W 
& \lesssim \|u^\eps\|_W^{2\nu} \|w^\eps\|_W \\
& + \(\|u^\eps\|_W^{2\nu-1}+ \|u^\eps_{\rm app}\|_W^{2\nu-1}\)
\|w^\eps\|_W \|u^\eps_{\rm app}\|_W\\
&\lesssim \(\|u^\eps_{\rm app}\|_W^{2\nu} + \|w^\eps\|_W^{2\nu}\)
\|w^\eps\|_W, 
\end{align*}
where time is fixed. We know from Lemma~\ref{lem:existprof} that
$u^\eps_{\rm app}\in C([0,T],W)$, so there exists $C_0$ independent of
$\eps\in ]0,1]$ such that
\begin{equation*}
  \|u^\eps_{\rm app}(t)\|_W\le C_0,\quad \forall t\in [0,T]. 
\end{equation*}
Since $u^\eps\in C([0,T^\eps],W)$ and $w^\eps_{\mid t=0}=0$, there
exists $t^\eps>0$ such that
\begin{equation}\label{eq:solong}
   \|w^\eps(t)\|_W\le C_0
\end{equation}
for $t\in [0,t^\eps]$. So long as \eqref{eq:solong} holds, we infer
\begin{equation*}
  \|w^\eps(t)\|_W \lesssim \int_0^t\|w^\eps(\tau)\|_W d\tau + |\l|o(1)
  + \eps,
\end{equation*}
where we have used Lemma~\ref{lem:phazstat}, Lemma~\ref{lem:estR2},
and \eqref{eq:r3small}. Gronwall lemma implies that so long as
\eqref{eq:solong} holds, 
\begin{equation*}
 \|w^\eps(t)\|_W \lesssim |\l|o(1)
  + \eps, 
\end{equation*}
where the right hand side does not depend on $t\in [0,T]$. Choosing
$\eps\in ]0,\eps_0]$ with $\eps_0$ sufficiently small, we see that
\eqref{eq:solong} remains true for $t\in [0,T]$, and the Wiener part
of Theorem~\ref{theo:wnlgo} follows. 
\smallbreak

For the $L^2$ setting, we resume \eqref{eq:Gpoint}. Plancherel's
identity and Young's inequality yield
\begin{align*}
\|G(u^\eps,\dots,u^\eps)-
G(u^\eps_{\rm app},\dots,u^\eps_{\rm app})\|_{L^2} 
& \lesssim \|u^\eps\|_W^{2\nu} \|w^\eps\|_{L^2} \\
& + \(\|u^\eps\|_W^{2\nu-1}+\|u^\eps_{\rm app}\|_{W}^{2\nu-1}\)
\|w^\eps\|_W \|u^\eps_{\rm app}\|_{L^2}. 
\end{align*}
By Lemma~\ref{lem:existprof},
$u^\eps_{\rm app}\in C([0,T],L^2(\R^d))$, so by the first part of the
proof of Theorem~\ref{theo:wnlgo}, the last line in the above
inequality is $\l o(1)+\O(\eps)$. We also know 
\begin{equation*}
  \|u^\eps(t)\|_W\le  2 C_0,\quad \forall t\in [0,T],
\end{equation*}
provided $\eps$ is sufficiently small. Gronwall lemma then shows
directly the estimate
\begin{equation*}
 \sup_{t\in [0,T]} \|w^\eps(t)\|_{L^2} \lesssim |\l|o(1)
  + \eps.
\end{equation*}
This completes the proof of Theorem~\ref{theo:wnlgo}. 

\smallbreak

Note that for $\l=0$, we get the rate $\O(\eps)$ for the remainder
term, while for $\l\not =0$, no rate is expected: this follows from
the analysis in \S\ref{sec:localizing}.

\section{More weakly nonlinear geometric optics}
\label{sec:moreweakly}

In this paragraph, we aim to get further insight on the geometric
optics approximation in Sobolev spaces of negative order. As we shall
see, estimates of the approximate solution (in negative order Sobolev
spaces) can  be somewhat counter-intuitive. 
To this end, we consider 
\begin{equation}
  \label{eq:moreweakly}
  i\eps\d_t u^\eps +\frac{\eps^2}{2}\Delta u^\eps = \mu \eps^J
  |u^\eps|^{2\nu} u^\eps \quad ,\quad u^\eps(0,x)=\sum_{j\in
    J_0}\alpha_j(x)e^{i\k_j\cdot x/\eps}. 
\end{equation} 
The regime $J=1$ is critical as far as nonlinear effects at leading
order are considered, according to \cite{CaBook}. For $J>1$, nonlinear
effects are negligible at leading order in $L^2\cap L^\infty$. We
shall analyze this phenomenon more precisely.
\subsection{Approximate solution}
\label{sec:approxweaker}
Pretending that even if $J>1$, the nonlinearity behaves like in the
critical case $J=1$, we can resume the discussion from
Section~\ref{sec:NLSWNLGO}: we consider the same resonant set, and the
transport system becomes
\begin{equation*}
  \d_t a_j^\eps + \k_j\cdot \nabla a_j^\eps = -i\mu
  \eps^{J-1}\sum_{(\ell_1,\dots,\ell_{2\nu+1})\in I_j} a_{\ell_1}^\eps\overline
  a_{\ell_2}^\eps\dots a_{\ell_{2\nu+1}}^\eps\quad ,\quad a^\eps_{j\mid
    t=0}=\alpha_j, 
\end{equation*}
where the notation now emphasizes that the presence of $\eps$ in the
equation makes the profiles $\eps$-dependent. 
Working in the same functional framework as in Section~\ref{sec:construct}, we
construct profiles, for which we prove first
\begin{equation*}
  (a_j^\eps)_{j\in J}\in C([0,T],X(\R^d))
\end{equation*}
for some $T>0$, uniformly in $\eps\in [0,1]$, then infer
\begin{equation*}
  a_j^\eps(t,x) = \alpha_j(x-t\k_j) + \O\(\eps^{J-1}\) \text{ in }C([0,T],W\cap
  L^2(\R^d)). 
\end{equation*}
Setting
\begin{equation*}
  u_{\rm app}^\eps(t,x) = \sum_{j\in
    J}a_j^\eps(t,x)e^{i\phi_j(t,x)/\eps}, 
\end{equation*}
a straightforward adaptation of Theorem~\ref{theo:wnlgo} yields,
provided that we start with suitable initial profiles,
\begin{equation*}
 \sup_{t\in [0,T]} \|u^\eps(t)-u_{\rm app}^\eps(t)\|_{W\cap
   L^2}=\O\(\eps\). 
\end{equation*}
\subsection{Negligible or not?}
\label{sec:neglornot}
In view of the proof of the norm
inflation phenomenon, we shall now focus on the case of
Example~\ref{ex:key}. We know from before, that, starting with three
non-trivial $\eps$-oscillations, the zero 
mode instantaneously appears at order $\eps^{J-1}$. 
For future reference, we prove a result whose assumptions will become
clear later on.
\begin{lemma}\label{lem:estimate}
  Let $d\ge 1$, $\beta>0$. For $f\in \Sch'(\R^d)$ and $\kappa\in \R^d$, 
  we denote
  \begin{equation*}
    I^\eps(f,\kappa)(x)= f\(x
  \eps^{(1-\beta)/2}\) e^{i\kappa\cdot x/\eps^{(1+\beta)/2}}
  \end{equation*}
$(1)$ Let $\kappa\in \R^d$, with $\kappa\not=0$. For all $\si\le 0$,
there exists $C=C(\sigma,\kappa)$ such that for all $f\in\Sch(\R^d)$,   
\begin{equation*}
   \|I^\eps(f,\kappa)\|_{H^\si(\R^d)}^2 \le
   C\eps^{-d(1-\beta)/2+(1+\beta)|\si|}
\|f\|^2_{H^m(\R^d)},   
\end{equation*}
with
\begin{itemize}
\item $m=|\si|$ if $\beta\le 1$
\item $m= \(\frac{1+\beta}{2}\)|\si|$ if $\beta\ge 1$. 
\end{itemize}
In addition, we have $C(\si,\kappa)\to 0$ as $|\kappa|\to +\infty$. \\
$(2)$ For all $\si\le 0$, $\beta<1$ and $f\in
L^2(\R^d)$,
\begin{equation*}
  \|I^\eps(f,0)\|_{H^\si(\R^d)}^2 = \eps^{ -d(1-\beta)/2}
\(\|f\|_{L^2(\R^d)}^2+o(1)\), \quad \text{as }\eps \to 0.
\end{equation*}
$(3)$ 
If $\beta =1$, $\si \in \R$ and $f\in H^\si(\R^d)$, 
$  \|I^\eps(f,0)\|_{H^\si(\R^d)}^2 = 
\|f\|_{H^\si(\R^d)}^2.$\\
$(4)$ 
If $\beta>1$, $\si\le 0$, and $f\in H^\si(\R^d)$, 
\begin{equation*}
 \|I^\eps(f,0)\|_{H^\si(\R^d)}^2 \ge
 \eps^{-d(1-\beta)/2+(\beta-1)|\si|}  \|f\|_{H^\si(\R^d)}^2.
\end{equation*}
\end{lemma}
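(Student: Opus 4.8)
The plan is to compute $\|I^\eps(f,\kappa)\|_{H^\si}^2$ directly on the Fourier side and track the interplay between the two $\eps$-powers hidden in the definition of $I^\eps$: a \emph{dilation} by $\eps^{(1-\beta)/2}$ (which rescales the frequency variable) and a \emph{modulation} by $\kappa/\eps^{(1+\beta)/2}$ (which translates the frequency support). Writing $\mu=(1-\beta)/2$ and $\delta=(1+\beta)/2$, one has $\widehat{I^\eps(f,\kappa)}(\xi)=\eps^{-d\mu}\,\widehat f\!\left(\eps^{-\mu}\xi-\eps^{-\delta}\kappa\right)$ up to a harmless unimodular factor, so that after the change of variables $\zeta=\eps^{-\mu}\xi-\eps^{-\delta}\kappa$ one gets
\begin{equation*}
\|I^\eps(f,\kappa)\|_{H^\si}^2=\eps^{-d\mu}\int_{\R^d}\big\langle \eps^{\mu}\zeta+\eps^{\mu-\delta}\kappa\big\rangle^{2\si}\,|\widehat f(\zeta)|^2\,d\zeta.
\end{equation*}
Since $\mu-\delta=-\beta$, the modulation contributes a factor of size $\eps^{-\beta}|\kappa|$ in the bracket; this is the source of the $\eps^{(1+\beta)|\si|}$ gain when $\si\le0$. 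The whole proof is then a matter of estimating the weight $\langle \eps^{\mu}\zeta+\eps^{-\beta}\kappa\rangle^{2\si}$ from above (parts (1) and (4)) or showing it concentrates (parts (2) and (3)).

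For part (1), with $\si\le 0$ and $\kappa\neq0$, I split according to whether $|\zeta|$ is below or above a threshold comparable to $\eps^{-\beta-\mu}|\kappa|=\eps^{-\delta}|\kappa|$. On the ``small'' region the bracket is $\gtrsim \eps^{-\beta}|\kappa|$, giving the stated $\eps^{(1+\beta)|\si|}$ factor (and, since $|\kappa|^{2\si}\to0$ as $|\kappa|\to\infty$, the claimed decay of the constant); here no regularity of $f$ beyond $L^2$ is needed. On the ``large'' region I bound $\langle\eps^\mu\zeta+\eps^{-\beta}\kappa\rangle^{2\si}\le 1$ and pay for the small measure of the region using $|\widehat f(\zeta)|^2\le \langle\zeta\rangle^{-2m}\langle\zeta\rangle^{2m}|\widehat f(\zeta)|^2$, i.e. trading $\langle\zeta\rangle^{-2m}\lesssim (\eps^{-\delta}|\kappa|)^{-2m}$ or $(\eps^{-\delta/\mu\cdots})$-type factors against $\|f\|_{H^m}^2$. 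The two cases $\beta\le1$ and $\beta\ge1$ arise because the comparison between $\eps^\mu\zeta$ and $\eps^{-\delta}\kappa$ for large $\zeta$ changes sign: one must choose $m$ just large enough that the leftover $\eps$-power on the ``large'' region does not beat the main term $\eps^{-d\mu+(1+\beta)|\si|}$, and a short bookkeeping of exponents yields $m=|\si|$ for $\beta\le1$ and $m=\delta|\si|$ for $\beta\ge1$. I expect this exponent bookkeeping — making sure the ``large'' region is genuinely negligible and that $m$ is chosen sharply — to be the only delicate point.

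Parts (2), (3), (4) are the case $\kappa=0$, where $\widehat{I^\eps(f,0)}(\xi)=\eps^{-d\mu}\widehat f(\eps^{-\mu}\xi)$ and
\begin{equation*}
\|I^\eps(f,0)\|_{H^\si}^2=\eps^{-d\mu}\int_{\R^d}\langle \eps^{\mu}\zeta\rangle^{2\si}|\widehat f(\zeta)|^2\,d\zeta.
\end{equation*}
For $\beta<1$ (so $\mu>0$), $\langle\eps^\mu\zeta\rangle^{2\si}\to1$ pointwise and is bounded by $1$ when $\si\le0$, so dominated convergence gives $\|I^\eps(f,0)\|_{H^\si}^2=\eps^{-d\mu}(\|f\|_{L^2}^2+o(1))$, which is part (2). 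For $\beta=1$, $\mu=0$ and the identity is exact, giving part (3) for any $\si\in\R$ and $f\in H^\si$. For $\beta>1$, $\mu<0$, so $|\eps^\mu\zeta|\ge|\zeta|$ and, since $\si\le0$, we need a lower bound: write $\langle\eps^\mu\zeta\rangle^{2\si}\ge \eps^{-2\mu\si}\langle\zeta\rangle^{2\si}$ (valid because $\langle\eps^\mu\zeta\rangle\le \eps^\mu\langle\zeta\rangle$ for $\eps^\mu\ge1$ and $\si\le0$), hence $\|I^\eps(f,0)\|_{H^\si}^2\ge \eps^{-d\mu-2\mu\si}\|f\|_{H^\si}^2=\eps^{-d(1-\beta)/2+(\beta-1)|\si|}\|f\|_{H^\si}^2$, which is part (4). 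These three cases are essentially immediate once the Fourier-side rescaling is set up; the substance of the lemma is part (1).
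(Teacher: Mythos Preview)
There is a computational slip in part~(1) that makes your sketch internally inconsistent. The Fourier transform of $x\mapsto f(\eps^{\mu}x)\,e^{i\kappa\cdot x/\eps^{\delta}}$ is
\[
\eps^{-d\mu}\,\widehat f\!\left(\eps^{-\mu}\bigl(\xi-\eps^{-\delta}\kappa\bigr)\right)
=\eps^{-d\mu}\,\widehat f\!\left(\eps^{-\mu}\xi-\eps^{-1}\kappa\right),
\]
since $\mu+\delta=1$; you wrote $\eps^{-\delta}\kappa$ in the last argument instead of $\eps^{-1}\kappa$. After the correct change of variables $\zeta=\eps^{-\mu}\xi-\eps^{-1}\kappa$ the weight is $\langle\eps^{\mu}\zeta+\eps^{-\delta}\kappa\rangle^{2\si}$, not $\langle\eps^{\mu}\zeta+\eps^{-\beta}\kappa\rangle^{2\si}$. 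With your stated weight the ``small'' region would only produce a gain of $\eps^{2\beta|\si|}$, which disagrees with the claimed $\eps^{(1+\beta)|\si|}$ except when $\beta=1$. Once corrected, the small region $|\zeta|\le\tfrac12\eps^{-1}|\kappa|$ gives $\langle\,\cdot\,\rangle^{2\si}\lesssim(\eps^{-\delta}|\kappa|)^{2\si}=\eps^{(1+\beta)|\si|}|\kappa|^{-2|\si|}$, and on the large region $\langle\zeta\rangle^{-2m}\lesssim(\eps^{-1}|\kappa|)^{-2m}=\eps^{2m}|\kappa|^{-2m}$, so the bookkeeping closes with the single value $m=\tfrac{1+\beta}{2}|\si|$ for \emph{all} $\beta>0$. (There is an analogous harmless sign slip in part~(4): the factor is $\eps^{2\mu\si}$, not $\eps^{-2\mu\si}$; since $2\mu\si=(1-\beta)\si=(\beta-1)|\si|$, your final exponent is nevertheless correct.)

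With that fix your approach is sound and genuinely differs from the paper's. The paper never splits the $\zeta$-domain: it multiplies and divides the integrand by a suitable power of $\langle\eps^{-\mu}\xi-\kappa/\eps\rangle$, pulls out the supremum of $\langle\xi\rangle^{-1}\langle\eps^{-\mu}\xi-\kappa/\eps\rangle^{-1}$ (respectively a variant with exponent $\tfrac{1+\beta}{2}$ when $\beta>1$), and bounds that supremum by $\eps^{\delta}$ via Peetre's inequality for $\beta\le1$ and a direct case analysis for $\beta>1$. This is why the paper records two values, $m=|\si|$ and $m=\delta|\si|$; your splitting argument yields the single (and, for $\beta<1$, slightly sharper) value $m=\delta|\si|$ throughout, which of course still implies the statement. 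Your treatment of parts~(2)--(4) coincides with the paper's.
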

  This result shows in particular that for $\si\le 0$, $\kappa\not =0$
  and $f$ sufficiently smooth, we always have
  $$\|I^\eps(f,0)\|_{H^\si(\R^d)}\gg
  \|I^\eps(f,\kappa)\|_{H^\si(\R^d)}.$$
\begin{proof}
  We compute
\begin{align*}
  \widehat{I^\eps(f,\kappa)}(\xi) &= 
  \frac{1}{(2\pi)^{d/2}}\int 
  e^{-ix\cdot \xi} f\(x
  \eps^{(1-\beta)/2}\) e^{i\kappa\cdot x/\eps^{(1+\beta)/2}}dx \\
&=
  \eps^{- d (1-\beta)/2} \frac{1}{(2\pi)^{d/2}}\int 
  e^{-iy\cdot \xi/\eps^{(1-\beta)/2} } f\(y\) e^{i\kappa\cdot
    y/\eps}dy  \\
&= \eps^{- d (1-\beta)/2}\widehat f\(
\frac{\xi}{\eps^{(1-\beta)/2}}- \frac{\kappa}{\eps}\). 
\end{align*}
Therefore, 
\begin{align*}
  \|I^\eps(f,\kappa)\|_{H^\si(\R^d)}^2 & = \int \<\xi\>^{2\si} \left\lvert
 \widehat{I^\eps(f,\kappa)} (\xi)\right\rvert^2d\xi \\ 
& = \eps^{-d(1-\beta)}\int \<\xi\>^{2\si} \left\lvert
\widehat f\(
\frac{\xi}{\eps^{(1-\beta)/2}}- \frac{\kappa}{\eps}\)\right\rvert^2d\xi.
\end{align*}
To prove the first point, we write, for $\si\le 0$, and $\beta\le 1$,
\begin{align*}
& \eps^{d(1-\beta)} \|I^\eps(f,\kappa)\|_{H^\si(\R^d)}^2= \\
&= \int \<\xi\>^{2\si} 
\<\frac{\xi}{\eps^{(1-\beta)/2}}- \frac{\kappa}{\eps}\>^{2\si}
\<\frac{\xi}{\eps^{(1-\beta)/2}}- \frac{\kappa}{\eps}\>^{2|\si|}
\left\lvert 
\widehat f\(
\frac{\xi}{\eps^{(1-\beta)/2}}-
\frac{\kappa}{\eps}\)\right\rvert^2d\xi\\
&\le \sup_{\xi \in \R^d}\(\<\xi\>^{-1}
\<\frac{\xi}{\eps^{(1-\beta)/2}}-
\frac{\kappa}{\eps}\>^{-1}\)^{2|\si|}\eps^{d(1-\beta)/2}\|f\|^2_{H^{|\si|}(\R^d)} .
\end{align*}
Next, write 
\begin{align*}
\<\frac{\xi}{\eps^{(1-\beta)/2}}-\frac{\kappa}{\eps}\>^{-1} 
= \< \eps^{(\beta-1)/2} 
\left( \xi - \frac{\kappa}{\eps^{(1+\beta)/2}} \right) \>^{-1} 
 \le \< \xi - \frac{\kappa}{\eps^{(1+\beta)/2}} \>^{-1},
\end{align*}
where we have used the assumption $\beta\le 1$. 
Then use Peetre inequality (see e.g. \cite{Treves1}) 
to get the desired estimate in the case $\beta\le 1$. 

In the case $\beta>1$, we use another decomposition:
\begin{align*}
& \eps^{d(1-\beta)} \|I^\eps(f,\kappa)\|_{H^\si(\R^d)}^2= \\
&= \int \<\xi\>^{2\si} 
\<\frac{\xi}{\eps^{(1-\beta)/2}}- \frac{\kappa}{\eps}\>^{(1+\beta)\si}
\<\frac{\xi}{\eps^{(1-\beta)/2}}- \frac{\kappa}{\eps}\>^{(1+\beta)|\si|}
\left\lvert 
\widehat f\(
\frac{\xi}{\eps^{(1-\beta)/2}}-
\frac{\kappa}{\eps}\)\right\rvert^2d\xi\\
&\le \sup_{\xi \in \R^d}\(\<\xi\>^{-2}
\<\frac{\xi}{\eps^{(1-\beta)/2}}-
\frac{\kappa}{\eps}\>^{-(1+\beta)}\)^{|\si|}\eps^{d(1-\beta)/2}
\|f\|^2_{H^{(1+\beta)|\si|/2}(\R^d)}. 
\end{align*}
We use the obvious estimate
\begin{equation*}
  \<\xi\>^2 \<\frac{\xi}{\eps^{(1-\beta)/2}}-
\frac{\kappa}{\eps}\>^{1+\beta}\gtrsim
\left\{
  \begin{aligned}
  \< \frac{\kappa}{\eps}\>^{1+\beta} & \text{ if } |\xi| \le
  |\kappa|/2\eps^{(1+\beta)/2},\\ 
\<\frac{\kappa}{\eps^{(1+\beta)/2}}\>^2& \text{ if } |\xi| \ge
|\kappa|/2\eps^{(1+\beta)/2} .
  \end{aligned}
\right.
\end{equation*}
In both cases, we infer
\begin{equation*}
  \<\xi\>^2 \<\frac{\xi}{\eps^{(1-\beta)/2}}-
\frac{\kappa}{\eps}\>^{1+\beta}\gtrsim \eps^{-(1+\beta)},
\end{equation*}
which yields the first point of the lemma. To prove the second point,
write 
\begin{align*}
 \|I^\eps(f,0)\|_{H^\si(\R^d)}^2 & =  \eps^{-d(1-\beta)}
\int \<\xi\>^{2\si} \left\lvert 
\widehat f\(
\frac{\xi}{\eps^{(1-\beta)/2}}\)\right\rvert^2d\xi\\
&=\eps^{-d(1-\beta)/2}\int \<\eps^{(1-\beta)/2}\xi\>^{2\si} \left\lvert 
\widehat f\(
\xi\)\right\rvert^2d\xi.
\end{align*}
In the case $\beta<1$, we conclude thanks to the Dominated Convergence
Theorem. 
The third point of the lemma ($\beta=1$) is obvious.  
To prove the last point, we write, 
\begin{align*}
 \|I^\eps(f,0)\|_{H^\si(\R^d)}^2 & =\eps^{-d(1-\beta)/2}
\int \<\eps^{(1-\beta)/2}\xi\>^{2\si} \left\lvert 
\widehat f\(
\xi\)\right\rvert^2d\xi\\
& =\eps^{-d(1-\beta)/2}
\int \frac{1}{\(1 +\eps^{1-\beta}|\xi|^2\)^{|\si|}} \left\lvert 
\widehat f\(
\xi\)\right\rvert^2d\xi\\
& =\eps^{-d(1-\beta)/2}\int \frac{\eps^{(\beta-1)|\si|}}{\(\eps^{\beta-1}
   +|\xi|^2\)^{|\si|}} \left\lvert \widehat
   f\(\xi\)\right\rvert^2d\xi\\
& \ge \eps^{-d(1-\beta)/2+ (\beta-1)|\si|}\int \frac{1}{\(1
   +|\xi|^2\)^{|\si|}} \left\lvert \widehat
   f\(\xi\)\right\rvert^2d\xi,
\end{align*}
and the result follows.
\end{proof}
\begin{remark}\label{rem:faiche}
  The last estimate of Lemma~\ref{lem:estimate} is sharp in terms of
  power of $\eps$, since by dominated convergence
  \begin{equation*}
    \|I^\eps(f,0)\|_{H^\si(\R^d)}^2 \Eq \eps 0 
\eps^{-d(1-\beta)/2+ (\beta-1)|\si|}\int_{\R^d} \frac{1}{
   |\xi|^{2|\si|}} \left\lvert \widehat
   f\(\xi\)\right\rvert^2d\xi,
    \end{equation*}
  for all $f\in L^2\cap H^\sigma$ when $-d/2<\si<0$, and 
  for all $f\in L^2\cap H^\sigma$ such that 
  $0\notin\text{supp}\widehat f$ when $\si\le-d/2$.
\end{remark}

Next, we shall simply apply Lemma~\ref{lem:estimate}  (in the case  
$\beta=1$) to $u^\eps_{\rm  app}$. We find, thanks to
Lemma~\ref{lem:a0NLS}, 
\begin{equation*}
  \|a^\eps_0(t)\|_{H^s(\R^d)} \approx \eps^{J-1},
\end{equation*}
for $t>0$ arbitrarily small. 
On the other hand, the first point of
Lemma~\ref{lem:estimate}  yields, for $s\le 0$:
\begin{equation*}
  \|u^\eps_{\rm app}(t)- a^\eps_0(t)\|_{H^s(\R^d)} \lesssim
  \eps^{|s|}. 
\end{equation*}
We infer, if $s\le 0$,
\begin{equation*}
  \|u^\eps(t)\|_{H^s(\R^d)} = \|a^\eps_0(t)\|_{H^s(\R^d)}  +
  \O\(\eps^{|s|}\)+\O\(\eps\),
\end{equation*}
where the last term stems from the geometric optics approximation, and
the simple control, for $s\le 0$, $\|f\|_{H^s}\le \|f\|_{L^2}$. We
conclude that for $t>0$ arbitrarily small, the zero mode \emph{is not 
negligible} in $H^s(\R^d)$, provided
\begin{equation*}
  |s|>J-1 \text{ and }J-1<1, \text{ that is } s<1-J<0 \text{ and }J<2.
\end{equation*}
We finally remark, that having the zero mode not negligible at leading order
means that nonlinear effects are present at leading order, in
$H^s(\R^d)$. We summarize these remarks in the following
\begin{proposition}
Let $s,J\in\R$ satisfy $s<1-J<0$ and $J<2$. Set $J_0=\{1,2,3\}$, 
$\k_0=0_{\R^d}$, and consider $\Phi_0$ from \eqref{eq:J0}. 
Then, there exist $\alpha_1, \alpha_2, \alpha_3 \in \Sch(\R^d)$, independent of $s$ and $J$, and a $T>0$, such that the unique 
solution $u^\eps \in C([0,T],L^2\cap L^\infty)$ to 
\eqref{eq:moreweakly} satisfies for all $t\in ]0,T]$ where
$a_0^\eps(t)\not =0$: 
$$\|u^\eps(t)\|_{H^s(\R^d)} \Eq \eps 0 \|a^\eps_0(t)\|_{H^s(\R^d)} 
\approx \eps^{J-1} \gg \|u^\eps(0)\|_{H^s(\R^d)} \approx \eps^{|s|}
\text{ as } \eps\to 0.$$
\end{proposition}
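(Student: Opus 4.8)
The plan is to assemble the ingredients of Sections~\ref{sec:approxweaker} and~\ref{sec:neglornot}. First, fix the data by taking $\alpha_1=\alpha_2=\alpha_3=\alpha$ for an arbitrary $\alpha\in\Sch(\R^d)\setminus\{0\}$; then Lemma~\ref{lem:a0NLS} (applicable since $\mu\neq0$ and $d\ge2$) gives $\d_t a_{0\mid t=0}=-i\mu C(\nu,d)|\alpha|^{2\nu}\alpha\neq0$, with $C(\nu,d)$ independent of $s$ and $J$. Working in the framework of Section~\ref{sec:construct} as indicated in Section~\ref{sec:approxweaker}, one solves the $\eps$-dependent transport system; the profiles satisfy $(a_j^\eps)_{j\in J}\in C([0,T],X^k(\R^d))$ for every $k\in\N$, bounded uniformly in $\eps\in[0,1]$ (Remark~\ref{rem:forinflation}), with $a_j^\eps(t)=\alpha_j(\cdot-t\k_j)+\O(\eps^{J-1})$ for $j\neq0$ and
\[
a_0^\eps(t)=\eps^{J-1}g(t)+o\!\left(\eps^{J-1}\right)\quad\text{in }L^2(\R^d),
\]
where $g$ solves the limiting transport equation with $g(0)=0$ and $\d_t g_{\mid t=0}=-i\mu C(\nu,d)|\alpha|^{2\nu}\alpha\neq0$; after possibly shrinking $T$, one has $g(t)\neq0$ --- hence $\|g(t)\|_{H^s}>0$ --- for all $t\in\,]0,T]$. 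With $u^\eps_{\rm app}=\sum_{j\in J}a_j^\eps e^{i\phi_j/\eps}$, the straightforward adaptation of Theorem~\ref{theo:wnlgo} to \eqref{eq:moreweakly} produces the unique $u^\eps\in C([0,T],L^2\cap L^\infty)$ and gives $\sup_{[0,T]}\|u^\eps-u^\eps_{\rm app}\|_{W\cap L^2}=\O(\eps)$.

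Next I would estimate $\|u^\eps_{\rm app}(t)\|_{H^s}$ mode by mode. Since $\k_0=0$ we have $\phi_0\equiv0$, so the zero mode is $a_0^\eps(t,\cdot)$ itself and the expansion above gives $\|a_0^\eps(t)\|_{H^s}=\eps^{J-1}\big(\|g(t)\|_{H^s}+o(1)\big)\approx\eps^{J-1}$ (the upper bound also following from $\|\cdot\|_{H^s}\le\|\cdot\|_{L^2}$ for $s\le0$). For $j\neq0$ one has $\k_j\neq0$, so Lemma~\ref{lem:estimate}$(1)$ with $\beta=1$ and $\si=s$ yields $\|a_j^\eps(t)e^{i\phi_j/\eps}\|_{H^s}\le C(s,\k_j)\,\eps^{|s|}\,\|a_j^\eps(t)\|_{H^{|s|}}$; since $C(s,\k_j)\to0$ as $|\k_j|\to\infty$ and $\sum_j\|a_j^\eps(t)\|_{H^{\lceil|s|\rceil}}\lesssim\|a^\eps(t)\|_{X^{\lceil|s|\rceil}}$ is bounded uniformly in $(\eps,t)$, summation over $j\neq0$ gives $\|u^\eps_{\rm app}(t)-a_0^\eps(t)\|_{H^s}\lesssim\eps^{|s|}$. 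Combining this with the $W\cap L^2$ approximation and $\|\cdot\|_{H^s}\le\|\cdot\|_{L^2}$,
\[
\|u^\eps(t)\|_{H^s}=\|a_0^\eps(t)\|_{H^s}+\O\!\left(\eps^{|s|}\right)+\O(\eps).
\]
The hypotheses $s<1-J$ and $J<2$ say precisely $|s|>J-1$ and $J-1<1$, i.e.\ $\eps^{|s|}\ll\eps^{J-1}$ and $\eps\ll\eps^{J-1}$; since $\|a_0^\eps(t)\|_{H^s}\approx\eps^{J-1}$ dominates both error terms, $\|u^\eps(t)\|_{H^s}\Eq{\eps}{0}\|a_0^\eps(t)\|_{H^s}\approx\eps^{J-1}$.

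For the initial datum, $u^\eps(0)=\sum_{j\in J_0}\alpha_j e^{i\k_j\cdot x/\eps}$ with the three $\k_j$ of \eqref{eq:J0} distinct and nonzero. One notes that the Fourier packets $\widehat\alpha_j(\cdot-\k_j/\eps)$ are, up to rapid decay, concentrated in fixed-size balls that are pairwise at distance $\gtrsim1/\eps$, so the cross terms in $\|u^\eps(0)\|_{H^s}^2$ are $\O(\eps^\infty)$ and
\[
\|u^\eps(0)\|_{H^s}^2=\sum_{j\in J_0}\int_{\R^d}\<\xi+\k_j/\eps\>^{2s}|\widehat\alpha_j(\xi)|^2\,d\xi\;(1+o(1)).
\]
By dominated convergence --- the factor $\big(\eps^2\<\xi+\k_j/\eps\>^2\big)^{s}\to|\k_j|^{2s}$, dominated for $s\le0$ using the Schwartz decay of $\widehat\alpha_j$ on $|\xi|\gtrsim1/\eps$ --- each integral equals $\eps^{2|s|}\big(|\k_j|^{2s}\|\alpha_j\|_{L^2}^2+o(1)\big)$, whence $\|u^\eps(0)\|_{H^s}\approx\eps^{|s|}$. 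Since $|s|>J-1$, this is $\ll\eps^{J-1}$, which is the last assertion.

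The main obstacle is not the upper bounds --- those are just Lemma~\ref{lem:estimate}$(1)$ together with the $\O(\eps)$ geometric-optics error --- but the two \emph{lower} bounds. The bound $\|a_0^\eps(t)\|_{H^s}\gtrsim\eps^{J-1}$ forces one to identify the leading profile $g(t)$ and to check that it is a genuinely nonzero (Schwartz) function, which relies on $\d_t a_{0\mid t=0}\neq0$ from Lemma~\ref{lem:a0NLS}; the bound $\|u^\eps(0)\|_{H^s}\gtrsim\eps^{|s|}$ relies on the almost-orthogonality of the widely separated oscillatory packets, since Lemma~\ref{lem:estimate}$(1)$ by itself supplies only the matching upper bound. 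The remaining ingredients --- uniformity in $t\in[0,T]$ and summability over the (at most countable) mode set $J$ --- are routine given Lemma~\ref{lem:morereg} and Remark~\ref{rem:forinflation}.
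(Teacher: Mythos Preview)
Your proposal is correct and follows the paper's own argument essentially step for step: invoke Lemma~\ref{lem:a0NLS} to get a nontrivial zero mode at order $\eps^{J-1}$, apply Lemma~\ref{lem:estimate}$(1)$ with $\beta=1$ to bound the oscillatory modes by $\O(\eps^{|s|})$, and use the $\O(\eps)$ geometric-optics error from the adaptation of Theorem~\ref{theo:wnlgo}. You supply more detail than the paper on two points it leaves implicit --- the expansion $a_0^\eps(t)=\eps^{J-1}g(t)+o(\eps^{J-1})$ justifying the \emph{lower} bound $\|a_0^\eps(t)\|_{H^s}\gtrsim\eps^{J-1}$, and the almost-orthogonality argument for the lower bound $\|u^\eps(0)\|_{H^s}\gtrsim\eps^{|s|}$ --- but these are exactly the right points to flesh out, and your treatment of them is sound.
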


\section{Norm inflation}
\label{sec:outline}

To explain our approach, we first consider the
nonlinear Schr\"odinger equation:
\begin{equation}
  \label{eq:NLSCauchy} 
  i\d_t \psi+\frac{1}{2}\Delta \psi = \mu |\psi|^{2\nu}\psi,\quad x\in
  \R^d\quad , \quad \psi_{\mid t=0}=\varphi. 
\end{equation}
We proceed in four steps:
\begin{enumerate}
\item Choice of a suitable scaling in order to be able to use weakly
  nonlinear geometric optics.
\item Link between the Sobolev norms of $\psi$ and approximate
  solutions given by geometric optics.
\item High frequency analysis (WNLGO).
\item Conclusion: what WNLGO implies in terms of $\psi$.
\end{enumerate}

\subsection{Scaling}
\label{sec:scaling}

 We consider the general scaling
\begin{equation*}
  u^\eps(t,x)=
  \eps^\alpha\psi\(\eps^\beta t,\eps^\gamma x\). 
\end{equation*}
To simplify the discussion, we want to fix $\alpha,\beta,\gamma$ so
that $\psi$ solves \eqref{eq:NLSCauchy} and $u^\eps$ solves 
\begin{equation*}
  i\eps \d_t u^\eps +\frac{\eps^2}{2}\Delta u^\eps = \mu \eps^J
  |u^\eps|^{2\nu} u^\eps,
\end{equation*}
with $1\le J<2$. 
We will relate phenomena affecting $u^\eps$ for times of order $\O(1)$
with a norm inflation for $\psi$ on times of order $o(1)$: this
imposes $\beta>0$. We find the
relation
\begin{equation*}
  1+\beta=2+2\gamma= J+2\nu\alpha. 
\end{equation*}
Leaving only $\beta$ as a free parameter, this means
\begin{equation}\label{eq:scaling}
 u^\eps(t,x)= \eps^{(\beta+1-J)/(2\nu)}
\psi\(\eps^\beta t,\eps^{(\beta-1)/2}x\). 
\end{equation}
The initial data that we want to consider
for $u^\eps$ are 
\begin{equation*}
  u^\eps(0,x)= \sum_{j\in J_0} \alpha_j\(x
  \) e^{i\kappa_j\cdot x/\eps},
\end{equation*}
with $\kappa_j\in \R^d$ and $\alpha_j \in \Sch(\R^d)$. 
In view of \eqref{eq:scaling}, this yields
\begin{equation*}
  \psi(0,x) = \eps^{-(\beta+1-J)/(2\nu)}\sum_{j\in J_0} \alpha_j\(x
  \eps^{(1-\beta)/2}\) e^{i\kappa_j\cdot x/\eps^{(1+\beta)/2}}. 
\end{equation*}
This is exactly the scaling used in Lemma~\ref{lem:estimate}, up to
the factor $\eps^{-(\beta+1-J)/(2\nu)}$.
\subsection{High frequency analysis}
\label{sec:high}
We resume the framework of Example~\ref{ex:key}, and suppose that at
time $t=0$, $u^\eps$ is the sum of three plane waves:
\begin{equation*}
  u^\eps(0,x) = \sum_{j=1}^3 \alpha_j(x)e^{i\k_j\cdot x/\eps}, 
\end{equation*}
with $\alpha_1,\alpha_2,\alpha_3\in \Sch(\R^d)$ and 
\begin{equation*}
  \k_1=(1,0,\dots,0), \ \k_2=(1,1,0,\dots,0),\
  \k_3=(0,1,0,\dots,0) \in \R^d.  
\end{equation*}
The important point is that by nonlinear resonance, the zero mode
appears (and possibly other modes):
\begin{equation*}
  u^\eps_{\rm app}(t,x) = a_0(t,x) + \sum_{j=1}^\infty
  a_j(t,x)e^{i\phi_j(t,x)/\eps},\quad \phi_j(t,x)= \k_j\cdot x-
  \frac{t}{2}|\k_j|^2,
\end{equation*}
where, for $j\ge 1$, we have $\k_j\in \Z^d\setminus\{0\}$, and 
the series is convergent in $L^2(\R^d)$, and more generally in all
Sobolev spaces from Remark~\ref{rem:forinflation}. 
By Lemma~\ref{lem:a0NLS} (or Lemma~\ref{lem:a0DS}, or 
Lemma~\ref{lem:a0genNLS}), even though $a_0$ is zero at time $t=0$, 
we can choose initial profiles so that 
$\d_t a_{0\mid t=0}\not =0$: this mode becomes instantaneously 
non-trivial. Geometric optics yields:
\begin{equation}\label{eq:ogok}
  \|u^\eps-u^\eps_{\rm app}\|_{L^\infty([0,T];L^2(\R^d))} 
  \Tend \eps 0 0 . 
\end{equation}
Below, we take advantage of this approximation, and of the fact that 
the new (non-oscillating) generated mode $a_0$ is much larger 
than the others in negative order Sobolev spaces, as  measured by
Lemma~\ref{lem:estimate}.  
\subsection{Proof of Theorem~\ref{theo:genNLSinflation}}
\label{sec:genNLSinflation}
In this case, we choose $J=1$. The reason why we have no flexibility
for $J$ here is that in Section~\ref{sec:moreweakly}, we have used the
fact that a rate for the error estimate is available,
$\|u^\eps-u^\eps_{\rm app}\|_{L^\infty([0,T],W\cap
  L^2)}=\O(\eps)$. Unlike the (NLS) case, no rate is available in
general in the presence of a non-local term; see Section~\ref{sec:localizing}. 
\smallbreak

For $\eps=1/n$, denote by $\psi_n$ the solution given by
\eqref{eq:scaling}, and by $\varphi_n$ its trace at
$t=0$. Lemma~\ref{lem:estimate} yields, for $s< 0$:
\begin{equation*}
  \|\varphi_n\|^2_{H^s(\R^d)}\lesssim \eps^{-\beta/\nu -d(1-\beta)/2
    +|s| (1+\beta)}. 
\end{equation*}
We have $\|\varphi_n\|_{H^s}\to 0 $ provided
\begin{equation}\label{eq:betacond1}
   -\frac{\beta}{\nu} -d\frac{1-\beta}{2}+|s|
  (1+\beta)>0\Longleftrightarrow \beta>\frac{d/2-|s|}{s_c+|s|},
\end{equation}
where $s_c$, given by \eqref{eq:sc},  is always non-negative in
the framework of this paper.
\smallbreak

Let $\tau>0$ independent of $\eps$ be such that $a_0(\tau)\not =0$. 
Set $t_n = \tau \eps^\beta= \tau/n^\beta$: $t_n\to 0$ provided
$\beta>0$. Denote by $\psi_{\rm app}$ the function obtained from
$u^\eps_{\rm app}$ via the scaling \eqref{eq:scaling} (the dependence
upon $n$ is omitted to ease the notation). 
Consider $\si\le 0$. We have obviously
\begin{equation*}
  \|\psi_n(t_n)- \psi_{\rm app}(t_n)\|_{H^\si(\R^d)}\le \|\psi_n(t_n)-
    \psi_{\rm app}(t_n)\|_{L^2(\R^d)} .
\end{equation*}
Estimate \eqref{eq:ogok} shows that we have
\begin{equation*}
  \|\psi_n(t_n)-
    \psi_{\rm app}(t_n)\|_{L^2(\R^d)} = o\( \|\psi_{\rm
      app}(t_n)\|_{L^2(\R^d)}\)\text{ as }n\to +\infty. 
\end{equation*}
We assume $0<\beta\le 1$. 
Lemma~\ref{lem:estimate} yields, for $\beta<1$,
\begin{equation*}
  \|\psi_{\rm app}(t_n)\|^2_{H^\si(\R^d)}\Eq \eps 0
  \|\psi_{\rm app}(t_n)\|^2_{L^2(\R^d)}\Eq \eps 0 \eps^{-\beta/\nu 
    -d(1-\beta)/2}\|a_0(\tau)\|_{L^2(\R^d)}^2. 
\end{equation*}
For $\beta =1$, we still have
\begin{equation*}
  \|\psi_{\rm app}(t_n)\|^2_{H^\si(\R^d)}\approx \eps^{-\beta/\nu 
    -d(1-\beta)/2} \approx
  \|\psi_{\rm app}(t_n)\|^2_{L^2(\R^d)}.
\end{equation*}
We infer, for $\beta\le 1$,
\begin{equation*}
  \|\psi_n(t_n)\|^2_{H^\si(\R^d)}\Eq \eps 0
  \|\psi_{\rm app}(t_n)\|^2_{H^\si(\R^d)}\approx \eps^{-\beta/\nu 
    -d(1-\beta)/2}. 
\end{equation*}
This power of $\eps$ is always negative, since we have $s_c\ge
0$, and
$\beta s_c\le s_c<d/2$. So to prove norm inflation, we simply have to
check the compatibility of \eqref{eq:betacond1} with the condition
$0<\beta\le 1$:
\begin{equation*}
  \frac{d/2-|s|}{s_c+|s|}<1\Longleftrightarrow  |s|>\frac{1}{2\nu}. 
\end{equation*}
The case of equality, which corresponds to the statement of
Theorem~\ref{theo:genNLSinflation}, can be reached thanks to logarithmic
modifications (multiply the initial data by $\ln \eps$), in the same
spirit as in \cite{CCT2,AlCa09}. 
\smallbreak

Finally, we simply note that all the negative order Sobolev norms of
$\psi$ 
become unbounded along the sequence of time $t_n$. It is then obvious
that so do the positive order Sobolev norms. 

\subsection{Proof of Theorem~\ref{theo:inflation2}}
\label{sec:inflation2}

We now assume $\l=0$: there is no non-local term, and we can use the
analysis of Section~\ref{sec:moreweakly}, with $1\le J<2$. We mimic
the discussion from the previous paragraph, concerning the algebraic
requirements on the different parameters: $\beta$, $s$, and now $J$. 
Lemma~\ref{lem:estimate} yields, for $s\le 0$:
\begin{equation*}
  \|\varphi_n\|^2_{H^s(\R^d)}\lesssim \eps^{-(\beta+1-J)/\nu -d(1-\beta)/2
    +|s| (1+\beta)}, 
\end{equation*}
so we demand
\begin{equation}\label{eq:beta22}
  \beta>\frac{d/2-|s|-(J-1)/\nu}{s_c+|s|}. 
\end{equation}
We will still demand $0<\beta\le 1$, so for all $\si\in \R$,
\begin{equation*}
  \|\psi_n(t_n)\|^2_{H^\si(\R^d)}\Eq \eps 0
  \|\psi_{\rm app}(t_n)\|^2_{H^\si(\R^d)}\approx \eps^{2(J-1)}
\eps^{-(\beta+1-J)/\nu 
    -d(1-\beta)/2}, 
\end{equation*}
where the new term $\eps^{2(J-1)}$ is due to the fact that we consider
``more weakly'' nonlinear geometric optics. 
This total power of $\eps$ is negative provided
\begin{equation}\label{eq:beta23}
  \beta s_c<\frac{d}{2} -(J-1)\(2+\frac{1}{\nu}\). 
\end{equation}
The  algebraic requirements are $0<\beta\le 1$, $1\le J<2$, 
\eqref{eq:beta22}, and \eqref{eq:beta23}. We check that they are
compatible, provided $s<-1/(1+2\nu)$. For such an $s$, we can find
$\delta>0$ so that
\begin{equation*}
  s = -\frac{\delta}{\nu} -\frac{1}{1+2\nu}. 
\end{equation*}
Pick $\beta=1$ and 
\begin{equation*}
  J = \frac{2+2\nu}{1+2\nu}-\delta.
\end{equation*}
The first two conditions are obviously fulfilled, at least if
$0<\delta\ll 1$ (it suffices to prove Theorem~\ref{theo:inflation2}
for $s$ close to $-1/(1+2\nu)$). A direct computation shows that so
are \eqref{eq:beta22} and
\eqref{eq:beta23}. Theorem~\ref{theo:inflation2} follows. 
\appendix

\section{Proof of Proposition~\ref{prop:ill}}
\label{sec:ill} 

Without recalling all details of \cite[Proposition~1]{BeTa05}, we shall give
a flavor of this rather general result, and explain how to infer
Proposition~\ref{prop:ill}. Roughly speaking, it suffices to prove that
one term in the Picard iteration process rules out
Definition~\ref{def:WP}, in order to deny well-posed for the solution to the
nonlinear problem. Therefore, we start with the free equation
\begin{equation*}
  i\d_t \psi + \frac{1}{2}\Delta_\eta \psi = 0\quad ,\quad \psi_{\mid
    t=0}=\varphi. 
\end{equation*}
For (NLS), we then consider the integral term
\begin{equation*}
  D(\varphi)(t,x)= -i\mu\int_0^t e^{i\frac{t-\tau}{2}\Delta_\eta}
  \(|\psi|^{2\nu}\psi\)(\tau,x)d\tau. 
\end{equation*}
To prove Proposition~\ref{prop:ill}, it suffices to show that the map
$\varphi\mapsto D(\varphi)$ is not continuous from $H^s(\R^d)$ to
$C([0,T],H^\si(\R^d))$, that is, there is no such control as
\begin{equation}\label{eq:cont}
  \|D(\varphi)\|_{L^\infty([0,T],H^\si(\R^d))}\lesssim
  \|\varphi\|^{2\nu+1}_{H^s(\R^d)} . 
\end{equation}
The main difference with the approach of Section~\ref{sec:outline} is
that now the analysis is ``much more linear''. In practice, we resume
the same lines as in  Section~\ref{sec:outline}, up to the factor
$\eps^{(\beta+1-J)/(2\nu)}$, which was there only to get precisely a
weakly nonlinear regime. We also fix $\beta=1$, and consider
\begin{equation*}
  \varphi(x) = \sum_{j=1}^3 \alpha_j(x) e^{i\k_j\cdot x/\eps},
\end{equation*}
where $\alpha_j\in \Sch(\R^d)$ and the $\k_j$'s are given by
Example~\ref{ex:key}. By creation of the zero mode (from 
Lemma~\ref{lem:a0NLS}), and Lemma~\ref{lem:estimate}, 
\eqref{eq:cont} would imply
\begin{equation*}
  1\lesssim \eps^{-s(2\nu+1)}, 
\end{equation*}
which is impossible if $s<0$. 
\smallbreak 

In the case where the non-local term is present, one can argue along
the same lines. For (DS), the assumption $\l+2\mu\not =0$ arises 
when one wants to use Lemma~\ref{lem:a0DS} in place of 
Lemma~\ref{lem:a0NLS}. For \eqref{eq:dipole}, one uses
Lemma~\ref{lem:a0nonlocNLS}.

\section{On negative
  order Sobolev spaces}
\label{sec:sobolev}
Lemma~\ref{lem:estimate} with $\beta=1$ shows that all Sobolev norms
for $I^\eps(f,\kappa)= f(x)e^{i\kappa\cdot x/\eps}$ behave according
to the intuition as $\eps\to 0$, provided $f\in \Sch(\R^d)$, that is
\begin{equation*}
\|I^\eps(f,\kappa)\|_{H^s(\R^d)}\lesssim \eps^{-s}, \quad \forall s\in
\R,\text{ if }\kappa\not =0.   
\end{equation*}
The aim of this appendix is to show that in general, negative order
Sobolev norms can behave rather strangely on functions which exhibit
rapid oscillations and/or concentration effects (as it is typically
the case for  
wave functions of quantum mechanics in the semi-classical limit)

\begin{example}[Oscillatory functions]
\label{sec:wkb}

We consider a WKB state with nonlinear phase function $\phi(x)=-\frac12
|x|^2$: 
  \begin{equation*}
    g^\eps(x) = e^{-|x|^2/2} e^{-i|x|^2/(2\eps)},\quad x\in \R^d.
  \end{equation*}

\begin{lemma}
  Let $d\ge 1$. Then 
\begin{equation*}
  \|g^\eps\|_{H^s(\R^d)} \approx 
\left\{
  \begin{aligned}
    \eps^{-s}&\text{ if } s>-d/2,\\
    \eps^{d/2}& \text{ if } s<-d/2.
  \end{aligned}
\right.
\end{equation*}
\end{lemma}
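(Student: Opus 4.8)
The plan is to compute $\widehat{g^\eps}$ explicitly, recognize it as a complex Gaussian, and reduce $\|g^\eps\|^2_{H^s}$ to a one-parameter family of integrals whose small-parameter asymptotics are elementary.

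First I would write $g^\eps(x) = e^{-z|x|^2/2}$ with $z = 1 + i/\eps$, so $\RE z = 1 > 0$. The classical formula for the Fourier transform of a Gaussian (principal branch of $z^{-d/2}$), together with the normalization \eqref{eq:Fourier}, gives $\widehat{g^\eps}(\xi) = z^{-d/2}e^{-|\xi|^2/(2z)}$. Since $\RE(1/z) = \eps^2/(1+\eps^2)$ and $|z| = (1+\eps^{-2})^{1/2}$,
\[
|\widehat{g^\eps}(\xi)|^2 = (1+\eps^{-2})^{-d/2}\,e^{-|\xi|^2\eps^2/(1+\eps^2)}.
\]
Setting $\delta = \delta(\eps):=\eps(1+\eps^2)^{-1/2}$ — so $\delta\approx\eps$ and, crucially, $(1+\eps^{-2})^{-d/2} = \delta^d$ — the change of variables $\xi = \eta/\delta$ and the identity $\langle\eta/\delta\rangle^{2s} = \delta^{-2s}(\delta^2+|\eta|^2)^s$ give
\[
\|g^\eps\|^2_{H^s(\R^d)} = \delta^d\int_{\R^d}\langle\xi\rangle^{2s}e^{-\delta^2|\xi|^2}\,d\xi = \delta^{-2s}\,F(\delta),\qquad F(\delta):=\int_{\R^d}(\delta^2+|\eta|^2)^s e^{-|\eta|^2}\,d\eta.
\]
Everything then reduces to the behaviour of $F(\delta)$ as $\delta\to 0$.

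Next I would split into two cases. If $s>-d/2$, I would bound $(\delta^2+|\eta|^2)^s$ from above and below (for $\delta\le 1$) by fixed $e^{-|\eta|^2}$-integrable functions — $|\eta|^{2s}$ and $\langle\eta\rangle^{2s}$, with the roles exchanged according to the sign of $s$ — so that $F(\delta)\approx 1$; in fact $F(\delta)\to\int_{\R^d}|\eta|^{2s}e^{-|\eta|^2}\,d\eta\in(0,\infty)$ by dominated convergence. Hence $\|g^\eps\|^2_{H^s}\approx\delta^{-2s}\approx\eps^{-2s}$, i.e. $\|g^\eps\|_{H^s}\approx\eps^{-s}$. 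If $s<-d/2$, the integrand is singular at the origin; in polar coordinates, splitting according to $|\eta|\le 1$ or $|\eta|>1$, the tail stays between two positive constants for $\delta\le 1$, while the inner part, after $|\eta| = \delta t$, equals
\[
c_d\,\delta^{d+2s}\int_0^{1/\delta}(1+t^2)^s t^{d-1}\,dt,
\]
with $\int_0^\infty(1+t^2)^s t^{d-1}\,dt$ convergent (because $2s+d-1<-1$) to a positive finite constant. Thus the inner part is $\approx\delta^{d+2s}$; as $d+2s<0$ this diverges and dominates the bounded tail, so $F(\delta)\approx\delta^{d+2s}$ and $\|g^\eps\|^2_{H^s}\approx\delta^{-2s}\delta^{d+2s}=\delta^d\approx\eps^d$, i.e. $\|g^\eps\|_{H^s}\approx\eps^{d/2}$.

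The only genuinely delicate point is the case $s<-d/2$: one must verify that the local singularity at the origin contributes exactly the power $\delta^{d+2s}$ and overwhelms the order-one contribution from away from the origin. Everything else is the Gaussian Fourier transform plus dominated convergence. (The threshold $s=-d/2$ is correctly excluded: there $F(\delta)\approx\log(1/\delta)$, producing an extra logarithmic factor.)
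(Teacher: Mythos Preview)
Your proof is correct and follows essentially the same approach as the paper: both compute the Fourier transform of the complex Gaussian $e^{-z|x|^2/2}$ with $z=1+i/\eps$, reduce $\|g^\eps\|_{H^s}^2$ to a one-parameter integral, and analyze its small-$\eps$ behaviour by splitting. The only difference is organizational---you factor out $\delta^{-2s}$ and handle the case $s>-d/2$ by dominated convergence, whereas the paper splits the radial integral at $r=\eps$ in both regimes---but the two arguments are equivalent.
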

\begin{proof}
  Consider more generally, for $z\in \C$ with $\RE z>0$, 
  \begin{equation*}
    g_z(x)= e^{-z|x|^2/2}. 
  \end{equation*}
We compute:
\begin{equation*}
  \F g_z (\xi) = z^{-d/2} e^{-|\xi|^2/(2z)}.
\end{equation*}
For $s\in \R$, we have, if $z=a+ib$, $a,b\in \R$, $a>0$:
\begin{align}
\notag  \|g_z\|_{H^s(\R^d)}^2 &= \int_{\R^d} \<\xi\>^{2s}\left\lvert \F
    g_z(\xi)\right\rvert^2 d\xi=
  \frac{1}{|z|^d}\int_{\R^d}\<\xi\>^{2s}e^{-\frac{a}{a^2+b^2}
    |\xi|^2}d\xi\\ 
&=
\frac{1}{a^{d/2}}\int_{\R^d}\<\(\frac{a^2+b^2}{a}\)^{1/2}\eta\>^{2s}
e^{-|\eta|^2}d\eta. \label{eq:HsGauss}
\end{align}
In the present case, $z=1+i/\eps$:
\begin{align*}
  \|g^\eps\|_{H^s(\R^d)}^2 &=  \int_{\R^d}\<\(1+\frac{1}{\eps^2}\)^{1/2}\eta\>^{2s}
e^{-|\eta|^2}d\eta\\
&\approx \int_{\R^d}\<\frac{\eta}{\eps}\>^{2s}
e^{-|\eta|^2}d\eta= c(d)\int_0^{+\infty}\(1+\frac{r^2}{\eps^2}\)^s
e^{-r^2}r^{d-1}dr.
\end{align*}
We split the last integral into
$\int_0^\eps+\int_{\eps}^{+\infty}$. Then, we have
\begin{equation*}
  \int_0^\eps \(1+\frac{r^2}{\eps^2}\)^s
e^{-r^2}r^{d-1}dr \approx \eps^d,
\end{equation*}
and by examining the local integrability near zero, we find
\begin{equation*}
  \int_\eps^{+\infty}\(1+\frac{r^2}{\eps^2}\)^s
e^{-r^2}r^{d-1}dr \approx
\left\{
  \begin{aligned}
    \eps^{d}&\text{ if } s<-d/2,\\
 \eps^{-2s}&\text{ if } s>-d/2. 
  \end{aligned}
\right.
\end{equation*}
The lemma follows.
\end{proof}

\end{example}

\begin{example}[Concentrating functions]
Another important example concerns functions which concentrate at a point, e.g.
  \begin{equation*}
    p^\eps(x) = \eps^{-d/4}e^{-|x|^2/(2\eps)} ,\quad x\in \R^d.
  \end{equation*}
The function $p^\eps$ is a 
so-called \emph{coherent state} in quantum mechanics (centered at the
origin in the phase space). 
\label{sec:coherent}
\begin{lemma}
  Let $d\ge 1$. Then 
\begin{equation*}
  \|p^\eps\|_{H^s(\R^d)} \approx 
\left\{
  \begin{aligned}
    \eps^{-s/2}&\text{ if } s>-d/2,\\
    \eps^{d/4}& \text{ if } s<-d/2.
  \end{aligned}
\right.
\end{equation*}
\end{lemma}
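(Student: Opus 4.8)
The plan is to follow exactly the same route as in the lemma of Example~\ref{sec:wkb}, exploiting the fact that $p^\eps$ is, up to normalization, a \emph{real} Gaussian. First I would apply the identity $\F g_z(\xi) = z^{-d/2} e^{-|\xi|^2/(2z)}$ (valid for $\RE z>0$), which was already established, with $z=1/\eps$. Since $p^\eps = \eps^{-d/4} g_{1/\eps}$, this gives $\F p^\eps(\xi) = \eps^{-d/4}\,\eps^{d/2}\, e^{-\eps|\xi|^2/2} = \eps^{d/4} e^{-\eps|\xi|^2/2}$, and hence
\begin{equation*}
  \|p^\eps\|_{H^s(\R^d)}^2 = \int_{\R^d}\<\xi\>^{2s}\left\lvert \F p^\eps(\xi)\right\rvert^2 d\xi
  = \eps^{d/2}\int_{\R^d}\<\xi\>^{2s} e^{-\eps|\xi|^2}\,d\xi .
\end{equation*}

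Next I would perform the change of variables $\xi = \eta\eps^{-1/2}$, whose Jacobian $\eps^{-d/2}$ cancels the prefactor $\eps^{d/2}$ (as it must, since $\|p^\eps\|_{L^2}$ does not depend on $\eps$). This turns the right-hand side into $\int_{\R^d}\<\eta/\sqrt\eps\>^{2s} e^{-|\eta|^2}\,d\eta$, which is precisely the integral appearing in the proof of the lemma of Example~\ref{sec:wkb}, but with $\eps$ replaced by $\sqrt\eps$. Writing it in polar coordinates as $c(d)\int_0^{+\infty}(1+r^2/\eps)^s e^{-r^2} r^{d-1}\,dr$ and splitting at $r=\sqrt\eps$, the part $\int_0^{\sqrt\eps}$ is $\approx \eps^{d/2}$, while the part $\int_{\sqrt\eps}^{+\infty}$ is $\approx \eps^{d/2}$ when $s<-d/2$ and $\approx \eps^{-s}$ when $s>-d/2$ (by examining local integrability near the origin, exactly as before). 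Therefore $\|p^\eps\|_{H^s}^2 \approx \eps^{-s}$ for $s>-d/2$ and $\approx \eps^{d/2}$ for $s<-d/2$, and taking square roots yields the stated estimate.

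I do not expect any genuine obstacle here: the lemma is an immediate corollary of the computation already carried out for $g^\eps$, the only care needed being the bookkeeping of the powers of $\eps$ produced by the normalization $\eps^{-d/4}$ and by the concentration scale $\sqrt\eps$ (concentration at scale $\sqrt\eps$, rather than oscillation at frequency $1/\eps$, is what replaces $\eps^{-s}$ by $\eps^{-s/2}$ in the regime $s>-d/2$). If a self-contained argument is preferred, one can simply reproduce the split-the-integral estimate verbatim, as in the proof of the previous lemma.
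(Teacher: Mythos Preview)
Your proposal is correct and follows essentially the same approach as the paper: reduce $\|p^\eps\|_{H^s}^2$ to $\int_{\R^d}\langle\eta/\sqrt\eps\rangle^{2s} e^{-|\eta|^2}\,d\eta$ and then invoke the computation already done for $g^\eps$, with $\eps$ replaced by $\sqrt\eps$. The paper's proof is in fact just those two sentences; your version is the same argument written out in more detail.
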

\begin{proof}
  Resume the above computation, with now $a  = 1/ \eps$ and $b=0$. We
  have
  \begin{equation*}
    \|p^\eps\|_{H^s(\R^d)}^2 =
    \int_{\R^d}\<\frac{\eta}{\sqrt\eps}\>^{2s} 
e^{-|\eta|^2}d\eta.
  \end{equation*}
We can then resume the same computations, by simply replacing $\eps$
with $\sqrt\eps$. 
\end{proof}

\end{example}

\bibliographystyle{amsplain}
\bibliography{biblio}

\providecommand{\bysame}{\leavevmode\hbox to3em{\hrulefill}\thinspace}
\providecommand{\MR}{\relax\ifhmode\unskip\space\fi MR }
\providecommand{\MRhref}[2]{%
  \href{http://www.ams.org/mathscinet-getitem?mr=#1}{#2}
}
\providecommand{\href}[2]{#2}
\begin{thebibliography}{10}

\bibitem{AbCl}
M.~J. Ablowitz and P.~A. Clarkson, \emph{Solitons, nonlinear evolution
  equations and inverse scattering}, London Math. Soc. Lect. Note Series No.
  149, Cambridge University Press, 1991.

\bibitem{AlCa09}
T.~Alazard and R.~Carles, \emph{Loss of regularity for super-critical nonlinear
  {S}chr\"odinger equations}, Math. Ann. \textbf{343} (2009), no.~2, 397--420.

\bibitem{BeTa05}
I.~Bejenaru and T.~Tao, \emph{Sharp well-posedness and ill-posedness results
  for a quadratic nonlinear schr\"odinger equation}, J. Funct. Anal.
  \textbf{233} (2005), 228--259.

\bibitem{CaBook}
R.~Carles, \emph{Semi-classical analysis for nonlinear {S}chr\"odinger
  equations}, World Scientific Publishing Co. Pte. Ltd., Hackensack, NJ, 2008.

\bibitem{CDS10}
R.~Carles, E.~Dumas, and C.~Sparber, \emph{Multiphase weakly nonlinear
  geometric optics for {S}chr{\"o}dinger equations}, SIAM J. Math. Anal.
  \textbf{42} (2010), no.~1, 489--518.

\bibitem{CMS08}
R.~Carles, P.~A. Markowich, and C.~Sparber, \emph{On the {G}ross--{P}itaevskii
  equation for trapped dipolar quantum gases}, Nonlinearity \textbf{21} (2008),
  2569--2590.

\bibitem{CCT2}
M.~Christ, J.~Colliander, and T.~Tao, \emph{Ill-posedness for nonlinear
  {S}chr\"odinger and wave equations},
  {{\href{http://arxiv.org/abs/math.AP/0311048}{{\mdseries\ttfamily
  math.AP/0311048}}}}.

\bibitem{CCTper}
\bysame, \emph{Instability of the periodic nonlinear {S}chr{\"o}dinger
  equation}, archived as {\tt arXiv:math/0311227}.

\bibitem{CoLa09}
M.~Colin and D.~Lannes, \emph{Short pulses approximations in dispersive media},
  SIAM J. Math. Anal. \textbf{41} (2009), no.~2, 708--732.

\bibitem{Iturbulent}
J.~Colliander, M.~Keel, G.~Staffilani, H.~Takaoka, and T.~Tao, \emph{Transfer
  of energy to high frequencies in the cubic defocusing nonlinear
  {S}chr\"odinger equation}, Invent. Math. \textbf{181} (2010), no.~1, 39--113.

\bibitem{GaZh08}
Z.~Gan and J.~Zhang, \emph{Sharp threshold of global existence and instability
  of standing wave for a {D}avey-{S}tewartson system}, Comm. Math. Phys.
  \textbf{283} (2008), no.~1, 93--125.

\bibitem{GhSa90}
J.-M. Ghidaglia and J.-C. Saut, \emph{On the initial value problem for the
  {D}avey-{S}tewartson systems}, Nonlinearity \textbf{3} (1990), no.~2,
  475--506.

\bibitem{GMS-p}
J.~Giannoulis, A.~Mielke, and C.~Sparber, \emph{High-frequency averaging in
  semi-classical {H}artree-type equations}, Asymptot. Anal., to appear.

\bibitem{KPV01}
C.~Kenig, G.~Ponce, and L.~Vega, \emph{On the ill-posedness of some canonical
  dispersive equations}, Duke Math. J. \textbf{106} (2001), no.~3, 617--633.

\bibitem{Lebeau05}
G.~Lebeau, \emph{Perte de r\'egularit\'e pour les \'equations d'ondes
  sur-critiques}, Bull. Soc. Math. France \textbf{133} (2005), 145--157.

\bibitem{Mo09}
L.~Molinet, \emph{On ill-posedness for the one-dimensional periodic cubic
  {S}chrodinger equation}, Math. Res. Lett. \textbf{16} (2009), no.~1,
  111--120.

\bibitem{Pa-p}
M.~Panthee, \emph{On the ill-posedness result for the {BBM} equation},
  preprint, archived as {\tt arXiv:1003.6098}.

\bibitem{Sulem}
C.~Sulem and P.-L. Sulem, \emph{The nonlinear {S}chr\"odinger equation,
  self-focusing and wave collapse}, Springer-Verlag, New York, 1999.

\bibitem{Th08}
L.~Thomann, \emph{Instabilities for supercritical {S}chr\"odinger equations in
  analytic manifolds}, J. Differential Equations \textbf{245} (2008), no.~1,
  249--280.

\bibitem{Treves1}
F.~Tr{\`e}ves, \emph{Introduction to pseudodifferential and {F}ourier integral
  operators. {V}ol. 1}, Plenum Press, New York, 1980, Pseudodifferential
  operators, The University Series in Mathematics.

\bibitem{YiYou}
S.~Yi and L.~You, \emph{Trapped atomic condensates with anisotropic
  interactions}, Phys. Rev. A \textbf{61} (2000), no.~4, 041604.

\end{thebibliography}

\end{document}